\newtheorem{theorem}{Theorem}
\newtheorem{remark}{Remark}%
\newtheorem{lemma}{Lemma}%
\newtheorem{proposition}{Proposition}%
\newtheorem{corollary}{Corollary}%
\newtheorem{definition}{Definition}%
\newcommand{\norm}[1]{\left|\left|{#1}\right|\right|}
\newcommand{\prox}{\operatorname{Prox}}
\newcommand{\rprox}{\operatorname{Rprox}}
\newcommand{\id}{\operatorname{Id}}
\newcommand{\dom}{\operatorname{dom}}
\renewcommand{\Im}{\operatorname{Im}}
\newcommand{\Fix}{\operatorname{Fix}}
\newcommand{\dime}{n}
\DeclareMathOperator*{\argmin}{arg\,min}
\newcommand{\reglambda }{\lambda}
\newcommand{\R}{\mathbb{R}}
\newcommand{\zeros}{\operatorname{zeros}}
\begin{document}

\title[Convergent plug-and-play with proximal denoiser and unconstrained regularization parameter]{Convergent plug-and-play with proximal denoiser and unconstrained regularization parameter}


\author[1]{\fnm{Samuel} \sur{Hurault}}\email{samuel.hurault@math.u-bordeaux.fr}

\author[2]{\fnm{Antonin} \sur{Chambolle}}\email{antonin.chambolle@ceremade.dauphine.fr}

\author[1]{\fnm{Arthur} \sur{Leclaire}}\email{arthur.leclaire@math.cnrs.fr}

\author[1]{\fnm{Nicolas} \sur{Papadakis}}\email{nicolas.papadakis@math.u-bordeaux.fr}

\affil*[1]{\orgdiv{Univ. Bordeaux, CNRS, Bordeaux INP, IMB, UMR 5251}, 
\orgaddress{
\postcode{F-33400}, \state{Talence}, \country{France}}}

\affil[2]{\orgdiv{CEREMADE, CNRS, Universit\'e Paris-Dauphine, PSL}, 
\orgaddress{
\city{Palaiseau}, 
\postcode{91128}, \country{France}}}


\abstract{
In this work, we present new proofs of convergence for Plug-and-Play (PnP) algorithms. PnP methods are efficient iterative algorithms for solving image inverse problems where regularization is performed by plugging a pre-trained denoiser in a proximal algorithm, such as Proximal Gradient Descent (PGD) or Douglas-Rachford Splitting (DRS). Recent research has explored convergence by incorporating a denoiser that writes exactly as a proximal operator. However, the corresponding PnP algorithm has then to be run with stepsize equal to $1$. The stepsize condition for nonconvex convergence of the proximal algorithm in use then translates to restrictive conditions on the regularization parameter of the inverse problem. This can severely degrade the restoration capacity of the algorithm. In this paper, we present two remedies for this limitation. First, we provide a novel convergence proof for PnP-DRS that does not impose any restrictions on the regularization parameter. Second, we examine a relaxed version of the PGD algorithm that converges across a broader range of regularization parameters. Our experimental study, conducted on deblurring and super-resolution experiments, demonstrate that both of these solutions enhance the accuracy of image restoration.
}

\keywords{Nonconvex optimization, inverse problems, plug-and-play}



\maketitle

\section{Introduction}\label{sec1}

In this work, we study the convergence of Plug-and-Play algorithms for solving image inverse problems. This is achieved by minimizing an explicit function:
\begin{equation} \label{eq:pb}
\hat x \in \argmin_{x} \lambda f(x) + \phi(x) .
\end{equation}
The function \( f \) represents a data-fidelity term that quantifies the alignment of the estimation \( x \) with the degraded observation \( y \), the function \( \phi \) is a \textit{nonconvex} regularization function, and \( \lambda > 0 \) acts as a parameter, determining the balance between these two terms.

In our experimental setup, we focus on degradation models where \( y \) is expressed as \( y = Ax^* + \nu \), with \( x^* \) representing the ground-truth signal in \(\mathbb{R}^n\), \( A \) a linear operator in \(\mathbb{R}^{n\times m}\), and \( \nu \) a white Gaussian noise in \(\mathbb{R}^n\). In this context, the data-fidelity term \( f(x) \) takes the form of a convex and smooth function \( f(x) = \frac12 \lVert Ax - y \rVert^2 \). However, our theoretical analysis extends to a wider range of noise models, as well as data-fidelity terms \( f \) that may not be smooth or convex.

To find an adequate solution of the ill-posed problem of recovering $x^*$ from~$y$, the choice of the regularization $\phi$ is crucial. Research has first been dedicated to find, by hand, regularizing potentials $\phi$ that are minimal when a desired property of a clean image is satisfied. Among the most classical priors, one can single out total variation~\cite{ROF}, $L^2$ gradient norm (Tikhonov)~\cite{tikhonov1963solution}, or sparsity in a given dictionary, for example in the wavelet representation~\cite{mallat2009sparse}. These handcrafted priors are now largely outperformed by learning approaches~\cite{zoran2011learning, bora2017compressed, romano2017little, zhang2021plug}, which may not even be associated to a closed-form regularization function $\phi$. 

Estimating a local or global optima of problem~\eqref{eq:pb} is classically done using proximal splitting optimization algorithms such as Proximal Gradient Descent~(PGD), Douglas-Rashford Splitting (DRS) or Primal-Dual. Given \hbox{an adequate stepsize $\tau>0$,} these methods alternate between explicit gradient descent steps, $\id-\tau \nabla h$ for smooth functions $h$, and/or \hbox{implicit gradient steps} using the proximal operator
${\prox_{\tau h}(x) \in \argmin_z \frac1{2\tau}||z-x||^2+h(z)}$, for a proper lower semi-continuous function~$h$.
Proximal algorithms are originally designed for convex functions, but under appropriate assumptions, PGD~\cite{attouch2013convergence} and DRS~\cite{themelis2020douglas}  algorithms converge to a stationary point of problem~\eqref{eq:pb} associated to nonconvex functions $f$ and $\phi$.

\paragraph{Plug-and-Play algorithms} 
Plug-and-Play (PnP)~\cite{venkatakrishnan2013plug} and Regularization by Denoising (RED)~\cite{romano2017little} methods consist in splitting algorithms in which the descent step on the regularization function is performed by an off-the-shelf image denoiser. They are respectively built from proximal splitting schemes by replacing the proximal operator (PnP) or the gradient operator (RED) of the regularization $\phi$ by an image denoiser. 
When used with a deep denoiser (\emph{i.e} parameterized by a neural network) these approaches produce impressive  results for various image restoration tasks~\cite{meinhardt2017learning, zhang2017learning, sun2019block, ahmad2020pnp, Yuan_2020_CVPR, sun2021scalable, zhang2021plug}. 

Theoretical convergence of PnP and RED algorithms with deep denoisers has recently been addressed by a variety of studies~\cite{ryu2019plug,sun2021scalable,pesquet2021learning,hertrich2021convolutional}. Most of these works require specific constraints on the deep denoiser, such as nonexpansivity. However, imposing nonexpansivity of a denoiser can severely degrade its performance (\cite[Table 1]{hertrich2021convolutional}, 
\cite[Figure 1]{bohra2021learning}, \cite[Table 3]{nair2022construction}).

Another line of works \citep{sreehari2016plug, cohen2021has, hurault2021gradient, hurault2022proximal, hurault2023relaxed} tries to address convergence by making PnP and RED algorithms exact proximal algorithms. The idea is to replace the denoiser of RED algorithms by a gradient descent operator and the one of PnP algorithm by a proximal operator. Theoretical convergence then follows from known convergence results of proximal splitting algorithms. In~\cite{cohen2021has,hurault2021gradient}, it is thus proposed to plug an explicit \emph{gradient-step denoiser} of the form $D=\id-\nabla g$, for a tractable and potentially nonconvex potential $g$ parameterized by a neural network. 
As shown in~\cite{hurault2021gradient}, such a constrained parametrization does not harm denoising performance. The gradient-step denoiser guarantees convergence of RED methods without sacrificing performance, but it does not cover convergence of PnP algorithms. 
An extension to PnP has been addressed in~\cite{hurault2022proximal, hurault2023relaxed}: following~\cite{gribonval2020characterization}, when $g$ is trained with nonexpansive gradient, the gradient-step denoiser can be written as a proximal operator $D=\id-\nabla g=\prox_{\phi}$ of a nonconvex potential $\phi$. A PnP scheme with this \emph{proximal denoiser} becomes again a genuine proximal splitting algorithm associated to an explicit nonconvex functional. Following existing convergence results of the PGD and DRS algorithms in the nonconvex setting, \cite{hurault2022proximal} proves convergence of PnP-PGD and PnP-DRS with proximal denoiser (called ProxPnP-PGD and ProxPnP-DRS). 

The main limitation of this approach is that the proximal denoiser $D=\prox_{\phi}$ does not give tractability of $\prox_{\tau \phi}$ for $\tau \neq 1$. Therefore, to be a provable converging proximal splitting algorithm, the stepsize of the overall PnP algorithm has to be fixed to $\tau=1$. 
For instance, when used with stepsize $\tau=1$, both the PGD algorithm \cite[Theorem 4.1]{hurault2022proximal} 
\begin{equation}
    \label{eq:PGD0}
    x_{k+1} \in \prox_{\phi} \circ (\id - \lambda \nabla f)(x_k)
\end{equation}
and the DRS algorithm \cite[Theorem 4.3]{hurault2022proximal} 
\begin{equation}
    x_{k+1} \in \left( \frac{1}{2}\rprox_{ \lambda f} \circ \rprox_{ \phi_\sigma} + \frac{1}{2}\id \right) (x_k)
\end{equation}
are proved to converge to a stationary point of~\eqref{eq:pb} for a regularization parameter $\lambda$ satisfying ${\lambda L_f<1}$. This is an issue when the the input image has mild degradations, for instance for low noise levels $\nu$, because in this case relevant solutions are obtained with a dominant data-fidelity term in~\eqref{eq:pb} through high values $\lambda>1 / L_f$. Note that convergence results for the DRS algorithm without such restriction on $\lambda$ has also been proposed in  \cite[Theorem 4.4]{hurault2022proximal} but at the cost of additional technical assumptions on the geometry of the denoiser which are difficult to verify in practice.

In this work, our objective is to design convergent PnP algorithms with a proximal denoiser, and with minimal restriction on the regularization parameter $\lambda$. Contrary to previous works on PnP convergence~\cite{ryu2019plug,sun2021scalable,terris2020building,cohen2021has,hurault2021gradient,hertrich2021convolutional}, we not only wish to adapt the denoiser but also the original optimization scheme of interest. We show and make use of the fact that the regularization $\phi$ (from $D = \id - \nabla g = \prox_{\phi}$) is weakly convex, with a weak convexity constant that depends only on the Lipschitz constant of the gradient of the learned potential $g$. We first recall the main result from \cite{hurault2023relaxed} that studies a new proximal algorithm, reminiscent to PGD, which converges for a fixed stepsize $\tau=1$ with relaxed restriction on $\lambda$. Second, we adapt the convergence study from \cite{themelis2020douglas} of the DRS algorithm in the nonconvex setting and prove convergence without restriction on $\lambda$. With both results, the constraint on $\lambda$ is replaced by a constraint on the Lipschitz constant of the gradient of the learned potential $g$. This new condition can be naturally satisfied using a convex relaxation of the gradient-step denoiser.

\subsection*{Contributions and outline} 

This work is an extension from \cite{hurault2023relaxed}. After  detailing the \emph{$\alpha$PGD} relaxation of the Proximal Gradient Descent algorithm introduced in  \cite{hurault2023relaxed}, we propose a new proof of convergence of the DRS algorithm, such that when used with a proximal denoiser, the corresponding PnP schemes~\emph{ProxPnP-DRS} can converge for any regularization parameter $\lambda$. 

In section~\ref{sec:defs_prop}, we start by introducing a variety of concepts and properties that will be used in our convergence analysis.   

In section~\ref{sec2}, we explain how the gradient-step denoiser $D = \id - \nabla g$ introduced in \cite{hurault2021gradient} can write as the proximal operator of a $M$-weakly convex function $\phi$. This denoiser is then called \emph{proximal denoiser}.
The weak convexity constant $M$ depends only on the Lipschitz constant of $\nabla g$ and can be controlled with a convex relaxation of the denoiser.

In section~\ref{sec3}, we focus on the nonconvex convergence of ProxPnP-PGD \cite{hurault2022proximal} \emph{i.e.} the PnP version of the standard PGD algorithm, with plugged proximal denoiser. By exploiting the weak convexity of $\phi$, the condition for convergence is $\lambda L_f+M <2$. 

In section~\ref{sec4}, we detail the \emph{$\alpha$PGD}, a relaxed version of the PGD algorithm\footnote{There are two different notions of relaxation in this paper. One is for the relaxation of the proximal denoiser and the other for the relaxation of the optimization scheme.} introduced in \cite{hurault2023relaxed}. Its convergence is shown in  Theorem~\ref{thm:alphaPGD} for a smooth convex function~$f$ and a weakly convex one $\phi$. Corollary~\ref{cor:ProxPnP-alphaPGD} then applies this result for proving convergence of ProxPnP-$\alpha$PGD, its PnP version with proximal denoiser, under the condition $\lambda L_f M <1$. Having a multiplication, instead of an addition, between the constants $\lambda L_f$ and $M$ allows to use any regularization parameter $\lambda$, provided we decrease $M$ sufficiently.

In section~\ref{sec:DRS}, we propose a new convergence proof for the PnP-DRS algorithm with proximal denoiser, without any restriction on the regularization parameter $\lambda$.  We extend the original nonconvex convergence result from \cite{themelis2020douglas} and prove that, under a new constraint on the weak convexity constant $M$, the PnP version of the DRS algorithm with proximal denoiser also converges to a stationary point of an explicit functional.

In section~\ref{sec:expes}, we realize experiments for both image deblurring and image super-resolution applications. We demonstrate that, by eliminating the constraint on the regularization parameter $\lambda$, our PnP-$\alpha$PGD and PnP-DRS algorithms close the performance gap with state-of-the-art plug-and-play algorithms.

\section{Definitions and useful properties}\label{sec:defs_prop}

\subsection{Definitions} 

In this section, we consider $f : \mathbb{R}^n \to \mathbb{R} \cup \{ + \infty \}$ proper and lower semicontinuous.
We recall that $f$ is \textit{proper} if  $\dom(f) \neq \emptyset$ and \textit{lower semicontinuous (lsc) } if $\forall x\in\mathbb{R}^n$ $\underset{y\to x}{\lim\inf}f(y)\geqslant f(x)$.
For $M \in \R$, $f$ is called $M$-weakly convex if $ x \to f(x)+ \frac{M}{2}\norm{x}^2$ is convex. Moreover, it is \textit{coercive} if ${\underset{\norm{x}\to+\infty}{\lim}f(x)=+\infty}$.

\paragraph{Nonconvex subdifferential} When $f$ is nonconvex, the standard notion of subdifferential 
\begin{equation}
\begin{split}
    \partial f (x) = \{ \omega &\in \mathbb{R}^n, \forall y \in \mathbb{R}^n, 
    f(y) - f(x) - \langle \omega, y-x \rangle \geq 0 \}
    \end{split}
\end{equation}
may not be informative enough. Indeed, with Fermat's rule, in both the convex and nonconvex cases, $\operatorname{Argmin} f = \zeros(\partial f)$, but for minimizing a nonconvex function, there is usually no hope to target a global minimum. Following \cite{attouch2013convergence}, we will use as notion of subdifferential the \textit{limiting subdifferential}
\begin{equation} \label{eq:limiting_subdifferential}
\begin{split}
       \partial^{lim} f(x) = \{&\omega \in \mathbb{R}^n, \exists x_k \to x, f(x_k) \rightarrow f(x), \omega_k \rightarrow \omega, \omega_k \in \hat \partial f(x_k) \}
\end{split}
\end{equation}
with $\hat \partial f$ the  \textit{Fréchet subdifferential} of $f$ defined as
\begin{equation}
\begin{split}
    \hat \partial f(x) &= \{ \omega \in \mathbb{R}^n, \liminf_{y \to x} \frac{f(y) - f(x) - \langle \omega, y-x \rangle }{\norm{x-y}} \geq 0 \} .
\end{split}
 \end{equation} These three notions of subdifferential verify for $x \in \dom f$
\begin{equation} \label{eq:inclusion_subdiff}
    \partial f(x) \subset \hat \partial f(x) \subset \partial^{lim} f(x) .
\end{equation}
and coincide when $f$ is convex.

A necessary (but not sufficient) condition for $x \in \R^n$ to be a local minimizer of a nonconvex function $f$ is  $0 \in \hat \partial f(x)$ and thus  $0 \in \partial^{lim}  f(x) $. 
Last but not least, we give two important properties of the limiting subdifferential.
\begin{proposition}[Sum rule {\cite[8.8(c)]{rt1998wets}}] \label{prop:subdif_sum}
    If $F = f + g$ with $f$ of class $\mathcal{C}^1$ and $g$ proper. Then for $x \in \dom(g)$, 
    \begin{equation}
        \partial^{lim} F(x) = \nabla F(x) + \partial^{lim} g(x).
    \end{equation}
\end{proposition}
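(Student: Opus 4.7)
The plan is to prove the sum rule in two stages, first establishing the analogous identity for the Fréchet subdifferential and then lifting it to the limiting subdifferential by a sequential limiting argument. I interpret $\nabla F(x)$ in the statement as $\nabla f(x)$ (since $g$ need not be differentiable), and I will prove $\partial^{lim} F(x) = \nabla f(x) + \partial^{lim} g(x)$.

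First, I would show that $\hat\partial F(x) = \nabla f(x) + \hat\partial g(x)$ for any $x \in \dom(g)$. Since $f$ is of class $\mathcal{C}^1$, a first-order Taylor expansion gives
\begin{equation*}
f(y) = f(x) + \langle \nabla f(x), y - x \rangle + o(\norm{y-x}) \quad \text{as } y \to x.
\end{equation*}
Substituting this into the Fréchet quotient for $F = f+g$ at a candidate subgradient $\omega$, the smooth remainder vanishes in the $\liminf$, and one reads off that
\begin{equation*}
\liminf_{y\to x} \frac{F(y)-F(x)-\langle \omega, y-x\rangle}{\norm{y-x}} \geq 0 \iff \liminf_{y\to x} \frac{g(y)-g(x)-\langle \omega - \nabla f(x), y-x\rangle}{\norm{y-x}} \geq 0.
\end{equation*}
This is exactly the statement that $\omega \in \hat\partial F(x)$ iff $\omega - \nabla f(x) \in \hat\partial g(x)$, yielding the Fréchet sum rule.

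Next, I would pass to the limiting subdifferential via the definition~\eqref{eq:limiting_subdifferential}. If $\omega \in \partial^{lim} F(x)$, pick sequences $x_k \to x$ with $F(x_k) \to F(x)$ and $\omega_k \to \omega$ with $\omega_k \in \hat\partial F(x_k)$. By the Fréchet sum rule just established, $\omega_k - \nabla f(x_k) \in \hat\partial g(x_k)$. Continuity of $f$ gives $f(x_k) \to f(x)$, hence $g(x_k) = F(x_k) - f(x_k) \to g(x)$; continuity of $\nabla f$ gives $\omega_k - \nabla f(x_k) \to \omega - \nabla f(x)$. The definition of $\partial^{lim} g$ then yields $\omega - \nabla f(x) \in \partial^{lim} g(x)$, so $\omega \in \nabla f(x) + \partial^{lim} g(x)$. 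The reverse inclusion is completely symmetric: starting from $\eta \in \partial^{lim} g(x)$ with approximating sequences, adding $\nabla f(x_k)$ produces an element of $\hat\partial F(x_k)$ that converges to $\nabla f(x) + \eta$, and the same continuity arguments ensure $F(x_k) \to F(x)$.

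The main subtlety is the smooth-remainder estimate in the first step: one needs to verify that the $\mathcal{C}^1$ assumption, not merely differentiability at $x$, gives a uniform-in-direction Taylor expansion strong enough so that the $o(\norm{y-x})$ term contributes nothing to the $\liminf$ quotient. Everything else is essentially bookkeeping about continuity of $f$ and $\nabla f$ when transferring the Fréchet identity to limiting subgradients, and no extra regularity of $g$ (e.g., local Lipschitzness) is required thanks to the fact that the smooth summand is everywhere differentiable with continuous gradient.
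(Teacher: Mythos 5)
Your proof is correct. The paper itself offers no argument for this proposition---it is quoted directly from Rockafellar and Wets \cite[8.8(c)]{rt1998wets}---so there is nothing to compare against except the standard textbook proof, which is exactly the two-stage argument you give: first the exact sum rule $\hat\partial F(x)=\nabla f(x)+\hat\partial g(x)$ for the Fr\'echet subdifferential, then a sequential transfer to $\partial^{lim}$ using the definition~\eqref{eq:limiting_subdifferential}. Your reading of $\nabla F(x)$ as $\nabla f(x)$ is the right one (the statement as printed is a typo, since $g$ need not be differentiable). Two small remarks. First, the ``main subtlety'' you flag at the end is not actually a subtlety: Fr\'echet differentiability of $f$ at $x$ means precisely that $f(y)-f(x)-\langle\nabla f(x),y-x\rangle=o(\norm{y-x})$ uniformly in direction, so mere differentiability at $x$ already suffices for the Fr\'echet-level identity; the $\mathcal{C}^1$ hypothesis is consumed only in the limiting stage, where you need $\nabla f(x_k)\to\nabla f(x)$ along the approximating sequence---and you do use it there correctly, together with the observation that $f(x_k)\to f(x)$ forces $g(x_k)=F(x_k)-f(x_k)\to g(x)$. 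Second, in the reverse inclusion one implicitly needs $g(x_k)$ finite for $\hat\partial g(x_k)$ to be nonempty, which is automatic from $\eta_k\in\hat\partial g(x_k)$, so no gap arises. The argument is complete.
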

\begin{proposition}[Closedness of the limiting subdifferential]
\label{prop:subdif_closed}
   For a sequence $(x_k, \omega_k) \in \operatorname{Graph}(\partial^{lim} f)$, if $(x_k, \omega_k) \to (x,\omega)$ and $f(x_k) \to f(x)$, then  $(x, \omega) \in \operatorname{Graph} (\partial^{lim} f)$,
\end{proposition}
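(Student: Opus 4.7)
The plan is to prove Proposition~\ref{prop:subdif_closed} by a standard diagonal extraction argument directly from the definition~\eqref{eq:limiting_subdifferential} of the limiting subdifferential.

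First, I would unpack what it means that $\omega_k\in\partial^{lim}f(x_k)$ for each $k$. By definition, there exist sequences $(y_k^j)_j$ and $(\eta_k^j)_j$ such that, as $j\to\infty$,
\begin{equation*}
y_k^j \to x_k, \qquad f(y_k^j)\to f(x_k), \qquad \eta_k^j\to \omega_k, \qquad \eta_k^j\in\hat\partial f(y_k^j).
\end{equation*}
Since $(x_k,\omega_k)\to (x,\omega)$ and $f(x_k)\to f(x)$ by assumption, each of these four quantities can be approximated arbitrarily well by a Fréchet subgradient at a nearby point.

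Second, I would perform the diagonal extraction. For each $k$, I would pick an index $j(k)$ large enough so that
\begin{equation*}
\|y_k^{j(k)}-x_k\| \leq \tfrac{1}{k}, \quad |f(y_k^{j(k)})-f(x_k)| \leq \tfrac{1}{k}, \quad \|\eta_k^{j(k)}-\omega_k\| \leq \tfrac{1}{k}.
\end{equation*}
Setting $\tilde x_k := y_k^{j(k)}$ and $\tilde\omega_k := \eta_k^{j(k)}$, the triangle inequality together with $x_k\to x$, $\omega_k\to \omega$, $f(x_k)\to f(x)$ yields $\tilde x_k \to x$, $\tilde\omega_k \to \omega$ and $f(\tilde x_k)\to f(x)$, while $\tilde\omega_k \in \hat\partial f(\tilde x_k)$ by construction. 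Plugging this into the definition~\eqref{eq:limiting_subdifferential} of $\partial^{lim}f$ at the point $x$ directly yields $\omega\in\partial^{lim}f(x)$, which is exactly the claim $(x,\omega)\in\operatorname{Graph}(\partial^{lim}f)$.

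The argument is essentially bookkeeping: there is no real analytic obstacle, since one only needs the closedness property to be preserved under a double limit and the limiting subdifferential is precisely designed to encode the closure of the Fréchet subdifferential graph with the extra condition of $f$-convergence. The only point that deserves care is to ensure that the $f$-convergence condition $f(\tilde x_k)\to f(x)$ is genuinely recovered from the hypothesis $f(x_k)\to f(x)$; this is guaranteed by the inequality $|f(\tilde x_k)-f(x)|\leq |f(\tilde x_k)-f(x_k)|+|f(x_k)-f(x)|\leq \tfrac{1}{k}+|f(x_k)-f(x)|\to 0$. Otherwise the proof reduces to invoking the definition twice and extracting a diagonal subsequence, so I do not expect any serious difficulty.
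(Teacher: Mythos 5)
Your proof is correct. The paper does not actually prove this proposition --- it is stated as a standard fact with a pointer to \cite[Chapter 8]{rt1998wets} --- and your diagonal extraction is precisely the textbook argument (cf.\ Rockafellar--Wets, Theorem 8.6): since the $\omega_k$ are limiting rather than Fr\'echet subgradients, one genuinely needs the intermediate approximating sequences $(y_k^j,\eta_k^j)$ and the choice of $j(k)$, and you handle the only delicate point, namely propagating the $f$-attentive convergence $f(\tilde x_k)\to f(x)$, correctly.
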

\noindent where $\operatorname{Graph}(F)$ stands for the graph of a point-to-set mapping
\begin{equation}
    \operatorname{Graph} F = \{ (x,y) \in \R^n \times \R^m, y \in F(x) \} .
\end{equation}

For additional details on the notion of limiting subdifferential, we refer to \citep[Chapter 8]{rt1998wets}.
In the rest of the paper, the subdifferential $\partial f$ corresponds to the limiting subdifferential $\partial^{lim}$ dfined in~\eqref{eq:limiting_subdifferential}.

\paragraph{Kurdyka–Łojasiewicz property}

The Kurdyka-Łojasiewicz (KŁ) property is a local property that characterizes the shape of the function $f$ around its critical points $\{x \in int \dom(f), 0 \in \partial f(x) \}$.
It is a tool widely used in nonconvex optimization \citep{attouch2010proximal, attouch2013convergence, ochs2014ipiano, bolte2018first, zeng2019global}. We use the definition from~\cite{attouch2010proximal}:
\begin{definition}[Kurdyka-Łojasiewicz (KŁ) property]
    \label{def:KŁ}
        A function $f : \mathbb{R}^n \to \mathbb{R} \cup \{+\infty \}$ is said to have the Kurdyka-Łojasiewicz property at $x^* \in dom(f)$ if there exists $\eta \in (0,+\infty)$, a neighborhood $U$ of $x^*$ and a continuous concave function $\psi : [0,\eta) \to \mathbb{R}_+$ such that
        $\psi(0)=0$, $\psi$ is $\mathcal{C}^ 1$ on $(0,\eta)$, $\psi' > 0$ on $(0,\eta)$ and $\forall x \in U \cap [f(x^*) < f < f(x^*)+\eta]$, 
        the \emph{Kurdyka-Łojasiewicz inequality} holds:
        \begin{equation} \label{eq:KL_ineq}
            \psi'(f(x)-f(x^*))\text{dist}(0,\partial f(x)) \geq 1 .
        \end{equation}
        Proper lsc functions that satisfy the Kurdyka-Łojasiewicz inequality at each point of $dom(\partial f)$ are called KŁ functions.
\end{definition}
\begin{remark}
    \begin{itemize}
        \item[(i)] For $f$ proper lsc, the KŁ inequality always holds at non-critical points $x^* \in \dom \partial f$ \cite[Remark 4.(b)]{attouch2010proximal}.  
        \item[(ii)] For $f$ smooth and $f(x^*)=0$, the KŁ inequality~\eqref{eq:KL_ineq} writes $\norm{\nabla (\psi \circ f)(x)} \geq 1$. The KŁ property can thus be interpreted as the fact that, up to a reparameterization, the function is locally sharp \citep{attouch2013convergence}. 
    \end{itemize}
\end{remark}
Folllowing \citep[Theorem 2.9]{attouch2013convergence}, to prove  the single-point convergence of iterative algorithms for minimizing the function $F = f + g$, with $f$ or $g$ nonconvex, the first requirement is to show that $F$ is KŁ. However, the KŁ condition is not stable by sum. For the purpose of the present paper, we need to choose a subclass of KŁ functions that, on the one hand, is large enough to encompass all our functions of interest and, on the other hand, has minimal stability properties so that inclusion to that set is easy to verify. In particular, as some of our functions will be parameterized by neural networks, stability by sum and composition are minimal requirements.  
In this analysis, the set of \textit{real analytic functions} is large enough to include all our functions of interest. 
\begin{definition}[Real analytic] \label{def:analytic}~
A function  $f : \mathbb{R}^n \to \mathbb{R} \cup \{+\infty \}$ with open domain
is said to be (real) analytic at $x \in \dom(f)$ 
if $f$ may be represented by a convergent power series on a neighborhood of $x$. The function is said to be analytic if it is analytic at each point of its domain. 
We can extend the definition for $f : \mathbb{R}^n \to \mathbb{R}^m$ which is analytic at $x \in \R^n$ if for $f = (f_1, \ldots, f_m)$, all $f_i : \mathbb{R}^n \to \R$ are analytic at $x$.
\end{definition}
Typical real analytic functions include polynomials, exponential functions, the logarithm, trigonometric and power functions. Moreover, real analytic functions have the following stability properties
\begin{lemma}[\cite{krantz2002primer}] \label{lem:stability-analytic}
    \begin{itemize}
    \item[(i)] The sum, product, and composition of real analytic functions are real analytic.
    \item[(ii)] A partial derivative of $f$ real analytic is real analytic.
    \item[(iii)] \underline{Real Analytic Inverse Function Theorem: } Let $f : \R^n \to \R^m$ be real analytic in a neighborhood of $x \in \R^n$ and suppose $J_f(x)$ non-singular, then $f^{-1}$ is defined and is real analytic in a neighborhood of $f(x)$. In particular, if $f$ is real analytic and $\forall x \in \R^n$, $J_f(x)$ is non-singular, then $f^{-1}$ is real analytic on $Im(f)$. 
\end{itemize}
\end{lemma}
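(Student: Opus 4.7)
The plan for parts (i) and (ii) is to work directly with convergent power series. Near a point $x_0$ where both $f$ and $g$ are real analytic, write the expansions $f(x)=\sum_\alpha a_\alpha (x-x_0)^\alpha$ and $g(x)=\sum_\alpha b_\alpha (x-x_0)^\alpha$, absolutely convergent on a common open polydisc $U$. Term-by-term addition yields the expansion of $f+g$ on $U$, and the Cauchy product (justified by absolute convergence of the resulting double sum) gives $fg$. For composition, if $g(x_0)=y_0$ and $f$ is analytic at $y_0$, substitute the series of $g-y_0$ into that of $f$; on a sufficiently small polydisc one obtains an absolutely convergent multi-indexed sum which, by Fubini/rearrangement, is itself a convergent power series in $(x-x_0)$. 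Termwise differentiation of an absolutely convergent power series preserves convergence on the same open polydisc, which gives (ii).

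For (iii), the plan is to pass through the complex analytic category. Real analyticity of $f$ near $x$ is equivalent to the existence of a holomorphic extension $\tilde f : \tilde U \to \mathbb{C}^n$ on an open complex neighborhood $\tilde U \subset \mathbb{C}^n$ of $x$, with $\tilde f$ agreeing with $f$ on $\tilde U \cap \R^n$ (note that for the inverse function theorem to be meaningful we must have $m=n$, which is implicit in the non-singularity hypothesis on $J_f(x)$). The complex Jacobian of $\tilde f$ at $x$ equals $J_f(x)$, hence is invertible, and the holomorphic inverse function theorem delivers a holomorphic local inverse $\tilde g$ of $\tilde f$ on a neighborhood of $f(x)$ in $\mathbb{C}^n$. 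Since $\tilde f$ sends real points to real points, the restriction of $\tilde g$ to a real neighborhood of $f(x)$ is real-valued, coincides with a local inverse of $f$, and is real analytic by definition. The global claim then follows by applying the local statement at every $x$ and patching, since $J_f$ non-singular everywhere makes $f$ a local analytic diffeomorphism at each point.

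The main obstacle is clearly (iii): the standard Banach fixed-point proof of the inverse function theorem only yields $C^k$ regularity and is blind to analyticity. Complexification circumvents this, at the cost of needing the nontrivial fact that a real analytic map extends holomorphically to an open complex neighborhood of its real domain, which in turn requires a uniform lower bound on the radii of convergence at nearby points. Once this extension is granted, the holomorphic inverse function theorem closes the argument. An alternative route would be the classical method of majorants applied to the formal inverse power series, which directly produces a convergent real series but involves heavier combinatorial bookkeeping. In either case, parts (i) and (ii) are routine manipulations of absolutely convergent series, while (iii) genuinely requires an argument sensitive to the analytic category.
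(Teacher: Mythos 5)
The paper does not prove this lemma: it is quoted verbatim from the cited reference \cite{krantz2002primer}, so there is no in-paper argument to compare against. Your sketch is the standard textbook proof (essentially the one in Krantz--Parks): absolute convergence and rearrangement of power series for sums, Cauchy products and substitution for (i), termwise differentiation for (ii), and complexification plus the holomorphic inverse function theorem for (iii). Two small remarks. First, for (iii) you only need the holomorphic extension \emph{locally} at the single point $x$, which is immediate by substituting complex arguments into the convergent power series at $x$; the uniform-radius issue you raise only arises if one insists on extending over the whole real domain at once, which is not needed here. Second, the ``in particular'' clause of the lemma as stated is slightly loose: everywhere-nonsingular $J_f$ makes $f$ a local analytic diffeomorphism, but $f^{-1}$ is only globally defined on $\Im(f)$ if $f$ is injective. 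Your patching step silently assumes this; in the paper's application the injectivity of $D_\sigma$ is established separately (via positive definiteness of $J_{D_\sigma}$ and monotonicity of $\nabla h_\sigma$), so the lemma is used correctly, but a fully self-contained proof of the global statement should flag the injectivity hypothesis. Neither point is a genuine gap in your argument for the intended use.
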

Finally, the important point here is that real analytic functions are KŁ: 
\begin{lemma}[\cite{law1965ensembles}]
    Let $f : \mathbb{R}^n \to \mathbb{R} \cup \{+\infty \}$ be a proper real analytic function, then for any critical point $x^* \in \dom(f)$, there exists a neighborhood $U$ of $x^*$, an exponent $\theta \in [\frac12, 1)$ and a constant $C$ such that the following \emph{Łojasiewicz inequality} holds:
        \begin{equation} \label{eq:KL_ineq2}
           \forall x \in U, \ \ |f(x)-f(x^*)|^\theta \leq C \norm{\nabla f(x)} .
        \end{equation}
\end{lemma}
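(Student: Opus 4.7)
The plan is to reduce to the origin: by translation we may assume $x^* = 0$ and $f(x^*) = 0$, so the target inequality becomes $|f(x)|^\theta \leq C \norm{\nabla f(x)}$ on some neighborhood of $0$. Equivalently, I want to show that the function
\begin{equation*}
x \longmapsto \frac{\norm{\nabla f(x)}}{|f(x)|^\theta}
\end{equation*}
is bounded away from zero on a punctured neighborhood of $0$, for some exponent $\theta \in [\tfrac12, 1)$.

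The standard route is through the semi-analytic / subanalytic machinery developed in Łojasiewicz's monograph \cite{law1965ensembles}. First I would invoke the \emph{curve selection lemma} for semi-analytic sets: if the inequality failed for every $C > 0$ and every $\theta < 1$, then for each such pair the semi-analytic set $A_{C,\theta} = \{x \neq 0 : \norm{\nabla f(x)}^2 \leq C^{-2} |f(x)|^{2\theta}\}$ accumulates at $0$, and one extracts a nonconstant real analytic arc $\gamma : [0,\varepsilon) \to \R^n$ with $\gamma(0) = 0$ lying in $A_{C,\theta}$ for $t > 0$. Both $f \circ \gamma$ and $\norm{\nabla f \circ \gamma}^2$ are then real analytic (in fact expand as Puiseux series in $t$ after a reparameterization). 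A direct comparison of leading exponents, using the chain-rule identity
\begin{equation*}
\frac{d}{dt}(f \circ \gamma)(t) = \langle \nabla f(\gamma(t)), \gamma'(t)\rangle,
\end{equation*}
forces the order of vanishing of $\norm{\nabla f \circ \gamma}$ to be at least (order of $f\circ\gamma$) $-\,1$, whence by choosing $\theta$ just below $1$ (specifically, $\theta$ related to the Łojasiewicz exponent of $f$ at $0$) one obtains a contradiction with $\gamma(t) \in A_{C,\theta}$.

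An alternative, more structural route uses Hironaka's resolution of singularities in the analytic category: one produces a proper analytic surjection $\pi : \tilde{U} \to U$ such that $f \circ \pi$ is locally of monomial form $u(y) \prod y_i^{\alpha_i}$ with $u$ analytic and nonvanishing. On such a monomial the inequality is elementary (compute $\nabla(f\circ\pi)$ and factor), then one pulls back using properness of $\pi$ to recover a neighborhood $U$ of $x^*$ in the original coordinates. This route is what ultimately fixes the admissible exponents in $[\tfrac12, 1)$; the lower bound $\theta \geq \tfrac12$ comes from the generic quadratic behaviour of $f$ at a Morse critical point and is sharp there.

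The main obstacle is that both routes rely on deep results from real analytic geometry (respectively the semi-analytic curve selection lemma and Hironaka-type resolution), which cannot be reasonably reproduced here and are the essence of Łojasiewicz's original contribution. For the purposes of this paper the lemma is therefore used as a black box, with reference to \cite{law1965ensembles}; the nontrivial content from our side is only the verification, in the later sections, that all the functions we minimize fall within the real analytic class, so that stability under sum, product and composition from Lemma \ref{lem:stability-analytic} allows us to apply this Łojasiewicz inequality and thereby KŁ-type convergence guarantees.
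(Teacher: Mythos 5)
The paper does not prove this lemma at all: it is stated purely as a citation to \cite{law1965ensembles}, and your conclusion that it must be used as a black box is exactly the paper's treatment. Your sketch of the two classical routes (curve selection for semi-analytic sets, or monomialization via resolution of singularities) is accurate as an account of where the result comes from, and your observation that the admissible exponents can always be pushed into $[\tfrac12,1)$ is consistent with the standard statement (note that the reduction to $\theta\ge\tfrac12$ is simply that the inequality for a larger exponent is weaker once $|f(x)-f(x^*)|\le 1$, so any valid exponent can be increased to $\tfrac12$; the Morse-point remark only explains sharpness). No gap relative to the paper, since the paper itself supplies nothing beyond the citation.
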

\begin{remark}
    The Łojasiewicz inequality is a particular case of the Kurdyka-Łojasiewicz inequality from Definition~\ref{def:KŁ} with desingularizing function $\psi(s) = c s^{1-\theta}$. 
\end{remark}

\subsection{Useful inequalities}\label{sec:pgd_prop}
In this section, we give different inequalities verified by weakly convex and smooth functions. These results constitute basic blocks that will be used in our different proofs of convergence.

\begin{proposition}[Inequalities for weakly convex functions]
\label{prop:weaklyconvex}
For $f : \mathbb{R}^n \to \mathbb{R} \cup \{+\infty \}$ proper lsc and $M$-weakly convex with $M \in \R$,
\begin{itemize}
    \item[(i)]  $\forall x,y$ and $t \in [0,1]$, 
    \begin{equation} 
    \begin{split}
    f(tx+(1-t)y) \leq & tf(x) + (1-t)f(y) + \frac{M}{2}t(1-t)\norm{x-y}^2;
    \end{split}
    \end{equation}
    \item[(ii)] $\forall x,y$, we have $\forall z \in \partial f(y)$,
    \begin{equation}
    f(x) \geq f(y) + \langle z,x-y \rangle - \frac{M}{2}\norm{x-y}^2;
\end{equation}
 \item[(iii)] \textbf{Three-points inequality}. 
For $z^+ \in \prox_f(z) $, we have, $\forall x$
\begin{equation}
    f(x) + \frac{1}{2}\norm{x-z}^2 \geq f(z^+) +  \frac{1}{2}\norm{z^+-z}^2 + \frac{1-M}{2}\norm{x-z^+}^2.
\end{equation}
\end{itemize}
\end{proposition}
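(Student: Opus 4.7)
My strategy is to reduce all three items to the defining property that $g := f + \tfrac{M}{2}\|\cdot\|^2$ is convex, together with one standard polarization identity. Throughout I use the identity $t\|x\|^2+(1-t)\|y\|^2-\|tx+(1-t)y\|^2 = t(1-t)\|x-y\|^2$ and, for the subdifferential calculations, the decomposition $f = g + (-\tfrac{M}{2}\|\cdot\|^2)$, which by the sum rule (Proposition~\ref{prop:subdif_sum}, applied to the smooth quadratic term) yields $\partial f(y)= \partial g(y) - My$; since $g$ is convex its limiting subdifferential coincides with the convex subdifferential.

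For (i), I apply convexity of $g$: $g(tx+(1-t)y)\leq tg(x)+(1-t)g(y)$. Expanding the definition of $g$ and rearranging, the quadratic contributions collapse via the polarization identity to exactly $\tfrac{M}{2}t(1-t)\|x-y\|^2$, which gives the claimed inequality. For (ii), given $z\in\partial f(y)$ the discussion above shows $z+My\in\partial g(y)$, so the convex subgradient inequality applied to $g$ reads $g(x)\geq g(y)+\langle z+My,x-y\rangle$. Subtracting the quadratic terms and simplifying $\tfrac{M}{2}\|y\|^2-\tfrac{M}{2}\|x\|^2+M\langle y,x-y\rangle=-\tfrac{M}{2}\|x-y\|^2$ (again a direct polarization computation) produces (ii).

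For (iii), let $z^+\in\prox_f(z)$, i.e.\ $z^+$ is a minimizer of $w\mapsto f(w)+\tfrac{1}{2}\|w-z\|^2$. By Fermat's rule together with Proposition~\ref{prop:subdif_sum} applied to the smooth quadratic, one gets $z-z^+\in\partial f(z^+)$. Invoking (ii) at $y=z^+$ with this subgradient yields
\begin{equation*}
f(x)\geq f(z^+)+\langle z-z^+,x-z^+\rangle-\frac{M}{2}\|x-z^+\|^2.
\end{equation*}
It remains to add $\tfrac{1}{2}\|x-z\|^2$ on both sides and compare with the target right-hand side. Using the polarization identity $\tfrac{1}{2}\|x-z\|^2 = \tfrac{1}{2}\|x-z^+\|^2 + \langle x-z^+, z^+-z\rangle + \tfrac{1}{2}\|z^+-z\|^2$, the cross terms $\pm\langle z-z^+,x-z^+\rangle$ cancel exactly, and one is left with $\tfrac{1}{2}\|x-z^+\|^2-\tfrac{M}{2}\|x-z^+\|^2=\tfrac{1-M}{2}\|x-z^+\|^2$, which is precisely (iii).

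No step looks genuinely hard: the only point that needs a little care is the subdifferential calculus, namely justifying $\partial f(y)=\partial g(y)-My$ and the Fermat condition $z-z^+\in\partial f(z^+)$ in the nonconvex (limiting-subdifferential) sense — both follow from the quoted sum rule since the added quadratic terms are smooth. Everything else is convex-analysis bookkeeping and the polarization identity.
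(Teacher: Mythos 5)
Your proof is correct and follows essentially the same route as the paper's: items (i) and (ii) are reduced to the convexity of $f+\tfrac{M}{2}\norm{\cdot}^2$, and (iii) is obtained from the prox optimality condition $z-z^+\in\partial f(z^+)$ combined with (ii) and the polarization expansion of $\tfrac12\norm{x-z}^2$. You merely spell out the subdifferential bookkeeping (sum rule for the smooth quadratic, coincidence of the convex and limiting subdifferentials) that the paper leaves implicit.
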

\begin{proof}
(i) and (ii) follow from the fact that $\phi + \frac{M}{2}\norm{x}^2$ is convex. We now prove (iii).
Optimality conditions of the proximal operator $z^+\in\prox_\phi(z)$ gives 
\begin{equation}
    z - z^+ \in \partial \phi(z^+).
\end{equation}
Hence, by (ii), we have $\forall x$,
\begin{equation}
\phi(x) \geq \phi(z^+) + \langle  z - z^+, x-z^+ \rangle - \frac{M}{2}\norm{x-z^+}^2,
\end{equation}
and therefore, 
\begin{equation}
\begin{split}
    \phi(x) + \frac{1}{2}\norm{x-z}^2
    &\geq \phi(z^+) + \frac{1}{2}\norm{x-z}^2 + \langle z - z^+, x-z^+ \rangle - \frac{M}{2}\norm{x-z^+}^2 \\
    &= \phi(z^+) + \frac{1}{2}\norm{x-z^+}^2 + \frac{1}{2}\norm{z-z^+}^2 - \frac{M}{2}\norm{x-z^+}^2 \\
    &= \phi(z^+)  + \frac{1}{2}\norm{z-z^+}^2 + \frac{1-M}{2}\norm{x-z^+}^2.
\end{split}
\end{equation}
\end{proof}


\begin{proposition}[Descent Lemma {\citep[Lemma 2.64]{BauschkeCombettes}}] \label{prop:descent_lemma}
        For $f : \mathbb{R}^n \to \mathbb{R} \cup \{+\infty \}$ proper lsc, if $f$ is $L_f$-smooth on an open and convex set ${C \subseteq int \dom(f)}$,  $\forall (x,y) \in C \times C$, we have
        \begin{equation}
        | f(x) - f(y) - \langle \nabla f(y), x-y \rangle | \leq \frac{L_f}{2}\norm{y-x}^2.
        \end{equation} 
    \end{proposition}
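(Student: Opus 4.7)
The plan is to reduce the multidimensional bound to a one-dimensional integration along the segment joining $y$ to $x$, exploiting the convexity of $C$ to ensure the segment stays inside the domain where $\nabla f$ is well-defined and $L_f$-Lipschitz.

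First I would fix $(x,y) \in C \times C$ and define the auxiliary function $\varphi : [0,1] \to \mathbb{R}$ by $\varphi(t) = f(y + t(x-y))$. Since $C$ is convex, the segment $\{y + t(x-y) : t \in [0,1]\}$ lies entirely in $C$, so $\varphi$ is well defined and of class $\mathcal{C}^1$ on $[0,1]$ with derivative $\varphi'(t) = \langle \nabla f(y + t(x-y)), x-y \rangle$. The fundamental theorem of calculus then yields
\begin{equation*}
f(x) - f(y) = \varphi(1) - \varphi(0) = \int_0^1 \langle \nabla f(y + t(x-y)), x-y \rangle \, dt.
\end{equation*}

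Next I would subtract the constant term $\langle \nabla f(y), x-y \rangle = \int_0^1 \langle \nabla f(y), x-y \rangle \, dt$ from both sides to obtain
\begin{equation*}
f(x) - f(y) - \langle \nabla f(y), x-y \rangle = \int_0^1 \langle \nabla f(y + t(x-y)) - \nabla f(y), x-y \rangle \, dt.
\end{equation*}
Applying the triangle inequality for integrals and then Cauchy--Schwarz inside the integrand gives
\begin{equation*}
|f(x) - f(y) - \langle \nabla f(y), x-y \rangle| \leq \int_0^1 \|\nabla f(y + t(x-y)) - \nabla f(y)\| \cdot \|x-y\| \, dt.
\end{equation*}
Finally, I would invoke the $L_f$-Lipschitz continuity of $\nabla f$ on $C$ to bound the first factor by $L_f \cdot t \|x-y\|$ and evaluate $\int_0^1 t \, dt = 1/2$, yielding the stated bound $\frac{L_f}{2} \|x-y\|^2$.

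The only step that requires mild care is the justification that the entire segment lies in $C$ so that Lipschitz continuity of $\nabla f$ can be applied along the path; this is immediate from convexity of $C$. The rest is a standard one-dimensional calculus argument, so no deeper obstacle is expected.
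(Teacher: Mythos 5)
Your proof is correct and is the standard integral-along-the-segment argument; the paper does not prove this proposition itself but cites it from Bauschke--Combettes, whose proof proceeds in essentially the same way (fundamental theorem of calculus on $t \mapsto f(y+t(x-y))$, Cauchy--Schwarz, and the Lipschitz bound $L_f t\norm{x-y}$ integrated over $[0,1]$). Your attention to the convexity of $C$ ensuring the segment stays where $\nabla f$ is Lipschitz is exactly the point of the hypothesis, so nothing is missing.
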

    

\section{Relaxed Proximal Denoiser}\label{sec2}

This section introduces the denoiser used in our PnP algorithms. 
We first recall the definition of the Gradient Step Denoiser and show in Proposition~\ref{prop:proxdenoiser} how it can be constrained to be a proximal denoiser. We finally introduce the relaxed proximal denoiser.

\subsection{Gradient Step  Denoiser}

In this paper, we make use of the Gradient Step Denoiser introduced in~\cite{hurault2021gradient,cohen2021has}, that writes as a gradient step over a differentiable potential~$g_\sigma$ parameterized by a neural network: 
\begin{equation}
    \label{eq:gs_denoiser}
    D_\sigma = \id - \nabla g_\sigma.
\end{equation}
 In~\cite{hurault2021gradient}, $g_\sigma$ is chosen to be of the form $g_\sigma(x)=\frac12||x-N_\sigma(x)||^2$ with $N_\sigma$ parameterized with a DRUNet architecture~\cite{zhang2021plug}. This denoiser can then be trained to denoise white Gaussian noise $\nu_\sigma$ of various standard deviations $\sigma$ by minimizing the $\ell_2$ denoising loss $\mathbf{E}[||D_\sigma(x+\nu_\sigma)-x)||^2]$.
 It is shown that the Gradient Step Denoiser~\eqref{eq:gs_denoiser}, despite being constrained to be a conservative vector field (as in~\cite{romano2017little}), achieves state-of-the-art denoising performance.

\subsection{Proximal Denoiser}\label{ssec:prox_denoiser}
We first present a characterization of the Gradient Step denoiser as a proximal operator of some weakly convex potential $\phi$. 
The proof of this result relies on~\cite{gribonval2020characterization}. 
\begin{proposition}[Proximal Gradient Step denoiser]
   \label{prop:proxdenoiser} Let ${g_\sigma : \mathbb{R}^n \to \mathbb{R}}$ a $\mathcal{C}^{k+1}$ function with $k \geq 1$ and $\nabla g_\sigma$ $L_{g_\sigma}$-Lipschitz with $L_{g_\sigma}<1$. Let ${D_\sigma := \id - \nabla g_\sigma = \nabla h_\sigma}$. 
    Then,
    \begin{itemize}
    \item[(i)] there is ${\phi_\sigma:\mathbb{R}^n  \to  \mathbb{R}^n \cup \{+\infty\}}$,  {$\frac{L_{g_\sigma}}{L_{g_\sigma}+1}$-weakly convex}, such that $\prox_{\phi_\sigma}$ is one-to-one and 
    \begin{equation}
    \label{eq:proximal_denoiser}
        D_\sigma = \prox_{\phi_\sigma}.
    \end{equation}
    Moreover $D_\sigma$ is injective, $\Im(D_\sigma)$ is open and there is a constant $K \in \R$ such that $\phi_\sigma$ is defined on $\Im(D_\sigma)$ by $\forall x \in \Im(D_\sigma)$,
    \begin{equation} \label{eq:phi}
    \begin{split}
        \phi_\sigma(x) &= g_\sigma(D_\sigma^{-1}(x))-\frac{1}{2} \norm{D_\sigma^{-1}(x)-x}^2 + K \\
        &= h_\sigma^{*}(x) - \frac{1}{2}\norm{x}^2 + K.
    \end{split}
    \end{equation}
    where $h_\sigma^{*}$ stands for the convex conjugate of $h_\sigma$.
        \item[(ii)] $\forall x \in \mathbb{R}^n$, $\phi_\sigma(x) \geq g_\sigma(x) + K$ and for $x \in \Fix(D_\sigma)$, $\phi_\sigma(x) = g_\sigma(x) + K$.
        \item[(iii)] $\phi_\sigma$ is $\mathcal{C}^k$ on $\Im(D_\sigma)$ and $\forall x \in \Im(D_\sigma)$, ${\nabla \phi_\sigma(x) = {D_\sigma}^{-1}(x) - x = \nabla g_\sigma ({D_\sigma}^{-1}(x))}$. Moreover, $\nabla \phi_\sigma$ is $\frac{L_{g_\sigma}}{1-L_{g_\sigma}}$-Lipschitz on $\Im(D_\sigma)$
    \end{itemize}
\end{proposition}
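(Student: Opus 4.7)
My plan is to recognize that the hypothesis $L_{g_\sigma}<1$ forces the function $h_\sigma$ underlying the denoiser to be strongly convex, which puts the problem into the framework of Gribonval--Nikolova's characterization of proximal maps of (possibly nonconvex) functions. From there each item is essentially Fenchel calculus plus the descent lemma.

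First, I would introduce $h_\sigma(x):=\tfrac12\|x\|^2-g_\sigma(x)$, so that $\nabla h_\sigma=\id-\nabla g_\sigma=D_\sigma$. Since $\nabla g_\sigma$ is $L_{g_\sigma}$-Lipschitz with $L_{g_\sigma}<1$, one has $(1-L_{g_\sigma})I\preceq \nabla^2 h_\sigma(x)\preceq (1+L_{g_\sigma})I$ pointwise, so $h_\sigma$ is $(1-L_{g_\sigma})$-strongly convex and $(1+L_{g_\sigma})$-smooth, of class $\mathcal{C}^{k+1}$. This immediately gives that $D_\sigma=\nabla h_\sigma$ is injective and, by the inverse function theorem applied to $D_\sigma$ (which has nonsingular Jacobian everywhere), $\Im(D_\sigma)$ is open and $D_\sigma^{-1}$ is $\mathcal{C}^k$ on it.

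For (i), I would set $\phi_\sigma:=h_\sigma^*-\tfrac12\|\cdot\|^2+K$ and verify directly that $\prox_{\phi_\sigma}=D_\sigma$: expanding the proximal objective,
\begin{equation*}
\tfrac12\|x-y\|^2+\phi_\sigma(x)=h_\sigma^*(x)-\langle x,y\rangle+\tfrac12\|y\|^2+K,
\end{equation*}
whose minimizers satisfy $y\in\partial h_\sigma^*(x)$, i.e.\ $x=\nabla h_\sigma(y)=D_\sigma(y)$. Strong convexity of $h_\sigma$ gives uniqueness. The explicit formula (\ref{eq:phi}) follows from the Legendre identity $h_\sigma^*(x)=\langle x,D_\sigma^{-1}(x)\rangle-h_\sigma(D_\sigma^{-1}(x))$. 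For the weak convexity constant, smoothness of $h_\sigma$ with constant $1+L_{g_\sigma}$ translates to $\tfrac{1}{1+L_{g_\sigma}}$-strong convexity of $h_\sigma^*$, hence $\phi_\sigma+\tfrac{L_{g_\sigma}/(1+L_{g_\sigma})}{2}\|\cdot\|^2=h_\sigma^*-\tfrac{1}{2(1+L_{g_\sigma})}\|\cdot\|^2+K$ is convex, giving exactly the claimed constant $M=L_{g_\sigma}/(L_{g_\sigma}+1)$.

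For (ii), substituting $y=D_\sigma^{-1}(x)$ and using $x=y-\nabla g_\sigma(y)$ reduces the inequality to
\begin{equation*}
g_\sigma(y)-g_\sigma(x)-\tfrac12\|\nabla g_\sigma(y)\|^2\geq 0.
\end{equation*}
This is where the descent lemma (Proposition~\ref{prop:descent_lemma}) applied to the $L_{g_\sigma}$-smooth function $g_\sigma$ at the points $y$ and $x=y-\nabla g_\sigma(y)$ gives $g_\sigma(x)\leq g_\sigma(y)-(1-\tfrac{L_{g_\sigma}}{2})\|\nabla g_\sigma(y)\|^2$, and since $L_{g_\sigma}<1$ the coefficient $1-L_{g_\sigma}/2$ is strictly larger than $1/2$; the claimed inequality follows, with equality exactly when $\nabla g_\sigma(y)=0$, i.e.\ when $x=y=D_\sigma(x)$, which is the fixed-point condition.

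Finally, for (iii), since $D_\sigma^{-1}$ is $\mathcal{C}^k$ on $\Im(D_\sigma)$, differentiating $\phi_\sigma$ through its explicit form gives $\nabla\phi_\sigma(x)=D_\sigma^{-1}(x)-x=\nabla g_\sigma(D_\sigma^{-1}(x))$; the chain rule yields $\nabla^2\phi_\sigma(x)=(I-\nabla^2 g_\sigma(D_\sigma^{-1}(x)))^{-1}-I$, whose spectrum lies in $[-\tfrac{L_{g_\sigma}}{1+L_{g_\sigma}},\tfrac{L_{g_\sigma}}{1-L_{g_\sigma}}]$ by the same pinching on $\nabla^2 g_\sigma$, giving the stated Lipschitz constant $L_{g_\sigma}/(1-L_{g_\sigma})$ for $\nabla\phi_\sigma$. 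The only conceptual obstacle is (i): identifying the correct Fenchel-type representative $\phi_\sigma$ and checking that it is genuinely defined (as a proper lsc function) by extending $h_\sigma^*$ beyond $\Im(D_\sigma)$, which is where appeal to the Gribonval--Nikolova characterization is cleanest; everything else is bookkeeping once $h_\sigma$ has been recognized as strongly convex.
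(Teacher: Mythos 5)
Your route through item (i) is correct and genuinely more self-contained than the paper's. Where the paper invokes Gribonval--Nikolova (their Proposition 2 for existence of a $\frac{L_{g_\sigma}}{L_{g_\sigma}+1}$-weakly convex $\phi_\sigma$ with $D_\sigma\in\prox_{\phi_\sigma}$, and their Theorem 4(b) to identify $\phi_\sigma$ with the explicit formula up to a constant on polygonally connected sets), you simply \emph{define} $\phi_\sigma=h_\sigma^*-\tfrac12\norm{\cdot}^2+K$ and verify $\prox_{\phi_\sigma}=D_\sigma$ by expanding the prox objective to $h_\sigma^*(x)-\langle x,y\rangle+\mathrm{const}$, whose unique minimizer is $\nabla h_\sigma(y)=D_\sigma(y)$; the weak-convexity constant then falls out of the $\frac{1}{1+L_{g_\sigma}}$-strong convexity of $h_\sigma^*$, and the explicit formula from the Legendre identity. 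This replaces the external characterization theorem by standard Fenchel calculus and is, if anything, cleaner.

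Two steps do have gaps. In (ii), the claim is $\phi_\sigma(x)\geq g_\sigma(x)+K$ for \emph{all} $x\in\R^n$, but your substitution $y=D_\sigma^{-1}(x)$ and the descent-lemma reduction only make sense for $x\in\Im(D_\sigma)$; nothing in your argument covers $x\notin\Im(D_\sigma)$. In your own framework the full claim is one line: it is exactly the Fenchel--Young inequality $h_\sigma^*(x)+h_\sigma(x)\geq\norm{x}^2$, with equality iff $x=\nabla h_\sigma(x)=D_\sigma(x)$, i.e.\ $x\in\Fix(D_\sigma)$ (the paper instead compares the prox objective at $z=D_\sigma(x)$ and $z=x$, which likewise works for every $x$). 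In (iii), the spectral pinching of $\nabla^2\phi_\sigma(x)=(I-\nabla^2 g_\sigma(D_\sigma^{-1}(x)))^{-1}-I$ into $[-\tfrac{L_{g_\sigma}}{1+L_{g_\sigma}},\tfrac{L_{g_\sigma}}{1-L_{g_\sigma}}]$ is correct, but converting a pointwise Hessian bound into a Lipschitz bound for $\nabla\phi_\sigma$ on $\Im(D_\sigma)$ requires integrating along segments contained in that set, and the paper is deliberately careful not to assume $\Im(D_\sigma)$ convex. The global estimate should instead be done as in the paper: for $x=D_\sigma(u)$, $y=D_\sigma(v)$,
\begin{equation*}
\norm{\nabla\phi_\sigma(x)-\nabla\phi_\sigma(y)}=\norm{\nabla g_\sigma(u)-\nabla g_\sigma(v)}\leq L_{g_\sigma}\norm{u-v}\leq \frac{L_{g_\sigma}}{1-L_{g_\sigma}}\norm{D_\sigma(u)-D_\sigma(v)},
\end{equation*}
the last step using the $(1-L_{g_\sigma})$-strong convexity of $h_\sigma$. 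Both gaps are localized and repairable without changing your overall strategy.
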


\begin{proof}
\textbf{(i)} This result is an extension of the Proposition 3.1 from \cite{hurault2022proximal}. Under the same assumptions as ours, it is shown, using the characterization of the proximity operator from \cite{gribonval2020characterization}, that for $\hat \phi_\sigma : \mathbb{R}^n \to \mathbb{R}$ defined as 
\begin{equation}\hspace*{-.5cm}
    \hat \phi_\sigma(x)\hspace*{-1pt}:=\hspace*{-1pt}\left\{\begin{array}{ll}   g_\sigma({D_\sigma}^{-1}(x)))-\frac{1}{2} \norm{{D_\sigma}^{-1}(x)-x}^2  \text{   if }\ x \in \Im(D_\sigma),\\
     +\infty \text{    otherwise} \end{array}\right.
\end{equation} 
we have $D_\sigma = \prox_{\hat \phi_\sigma}$.

As $\dom(\hat \phi_\sigma) = \Im(D_\sigma)$ is non necessarily convex, we cannot show that $\hat \phi_\sigma$ is weakly convex. We thus propose another choice of function  $\phi_\sigma$ such that $D_\sigma = \prox_{\phi_\sigma}$.
As we suppose that $\nabla g_\sigma  = \id - D_\sigma$ is $L_{g_\sigma}$ Lipschitz, $D_\sigma$ is $L_{g_\sigma}+1$ Lipschitz. By~\cite[Proposition 2]{gribonval2020characterization}, we have $\forall x \in \mathcal{X}$,
    \begin{equation}
    D_\sigma(x) \in \prox_{\phi_\sigma}(x)
    \end{equation}
with ${\phi_\sigma:\mathbb{R}^n  \to  \mathbb{R}^n \cup \{+\infty\}}$ which is 
$\left( 1 - \frac{1}{L+1}\right) = \frac{L}{L+1}$ weakly convex.
 The weak convexity constant being smaller than $1$, $x \to \phi_\sigma(x) + \frac{1}{2}\norm{x-y}^2$ is convex and $\prox_{\phi_\sigma}$ is one-to-one.
    According to~\cite[Theorem 4 (b)]{gribonval2020characterization}, there exists a constant $K \in \R$ such that for any $C \subseteq Im(D_\sigma)$ polygonally connected $\phi_\sigma = \hat \phi_\sigma + K$ on $C$ \emph{i.e.}
    \begin{align} \label{eq:phi-before-inv}
        \forall y \in C, \ \  \phi_\sigma(D_\sigma(y)) &= \langle y, D_\sigma(y) \rangle - \frac{1}{2}\norm{D_\sigma(y)}^2 - h_\sigma(y) + K  \\
        &= g_\sigma(y)-\frac{1}{2} \norm{y-D_\sigma(y)}^2 + K  \label{eq:phi-before-inv2}
    \end{align}
    Moreover, from the equality case of the Fenchel-Young inequality,
    \begin{equation} \label{eq:fy2}
        \langle y, D_\sigma(y) \rangle = \langle y, \nabla h_\sigma(y) \rangle = h_\sigma(y) + h_\sigma^*(D_\sigma(y)) .
    \end{equation}
    Combining~\eqref{eq:phi-before-inv}  and \eqref{eq:fy2}, we get
    \begin{equation}
    \phi_\sigma(D_\sigma(y)) = h_\sigma^*(D_\sigma(y)) - \frac{1}{2}\norm{D_\sigma(y)}^2  + K
    \end{equation}
    As $\nabla g_\sigma$ is $\mathcal{C}^2$ and $L<1$-Lipschitz, $\forall x \in \R^n, J_{D_\sigma}(x) = \id - \nabla^2 g_\sigma(x)$ is positive definite and $D_\sigma$ is injective. We can consider its inverse $D_\sigma^{-1}$ on $\Im(D_\sigma)$, and we get~\eqref{eq:phi} from~\eqref{eq:phi-before-inv2} 
    Also, the inverse function theorem ensures that $\Im(D_\sigma)$ is open in $\mathbb{R}^n$.
    As $D_{\sigma}$ is continuous, $\Im(D_\sigma)$ is connected and open, thus polygonally connected. Therefore, $\eqref{eq:phi-before-inv}$ is true on the whole $\Im(D_\sigma)$.

    \textbf{(ii)} We have
    \begin{equation}
    \begin{split}
            \phi_\sigma(x) &= \frac{1}{2}\norm{x-x}^2 + \phi_\sigma(x) \\
            &\geq \frac{1}{2}\norm{x-D_\sigma(x)}^2 + \phi_\sigma(D_\sigma(x)) \\
            &= g_\sigma(x) + K
    \end{split}
    \end{equation}
    where the first inequality comes from the definition of the proximal operator  $D_\sigma = \prox_{\phi_\sigma}$ and the last equality is given
    by~\eqref{eq:phi}.

    \textbf{(iii)} This is the application of \cite[Corollary 6]{gribonval2020characterization}. 
    For the Lipschitz property, let $x,y \in \Im(D_\sigma)$, there exists $u,v \in \mathbb{R}^n$ such that $x=D_\sigma(u)$ and $y = D_\sigma(v)$. Hence, we have
    \begin{equation}
    \begin{split}
        \norm{\nabla \phi_\sigma(x)- \nabla \phi_\sigma(y)}
        &= \norm{{D_\sigma}^{-1}(x)-{D_\sigma}^{-1}(y) - (x-y)}  \\
        &= \norm{u-D_\sigma(u) - (v-D_\sigma(v))}  \\
        &= \norm{\nabla g_\sigma(u) - \nabla g_\sigma(v)}  \\
        &\leq L \norm{u-v}
    \end{split}
    \end{equation}
    because $g_\sigma$ is $L$-Lipschitz. Moreover, as $J_{D_\sigma}(x) = \nabla^2 h_\sigma(x) = \id - \nabla^2 g_\sigma(x)$, 
    $\forall u \in \R^n$, $\langle \nabla^2 h_\sigma(x)u, u \rangle = \norm{u}^2 - \langle \nabla^2 g_\sigma(x)u, u \rangle \geq (1-L)\norm{u}^2$ and $h_\sigma$ is $(1-L)$-strongly convex.
    Thus continuing from last inequalities
    \begin{equation}
    \begin{split}
        \norm{\nabla \phi_\sigma(x)- \nabla \phi_\sigma(y)} &\leq \frac{L}{1-L} \norm{\nabla h_\sigma(u)- \nabla h_\sigma(v)} \\
        &= \frac{L}{1-L} \norm{D_\sigma(u)- D_\sigma(v)} \\
        &= \frac{L}{1-L} \norm{x-y} .
    \end{split}
    \end{equation}
\end{proof}

This result states that a proximal denoiser can be defined from the  denoiser~\eqref{eq:gs_denoiser}, if the gradient of the learned potential $g_\sigma$ is contractive.
In~\cite{hurault2022proximal} the Lipschitz constant of  $\nabla g_\sigma$ is softly constrained to satisfy $L_{g_\sigma}<1$, by penalizing the spectral norm $\norm{\nabla^2 g_\sigma(x+\nu_\sigma)}_S$ in the denoiser training loss. The resulting proximal denoiser has a fixed weak-convexity constant that depends only on $L_{g_\sigma}$. In the next section, we propose a way to control this value without retraining $g_\sigma$.

\subsection{More details on the regularization $\phi_\sigma$} \label{sec:more_details_reg_proxpnp}

We keep the parameterization of the potential $g_\sigma$ proposed by \cite{hurault2022proximal}:
\begin{equation} \label{eq:gsigma_proxpnp}
    g_\sigma(x) = \frac{1}{2}\norm{x - N_\sigma(x)}^2,
\end{equation}
where  $N_\sigma : \mathbb{R}^n \to \mathbb{R}^n$ is a $\mathcal{C}^\infty$ neural network with softplus activations
\begin{equation}
   s_\epsilon(x) = \frac{1}{\epsilon}\log(1 + e^{\epsilon x})
\end{equation}
As explained in \cite{hurault2021gradient}, $g_\sigma$ can be made coercive by choosing a convex compact set $C \subset \mathbb{R}^n$ where the iterates should stay and by adding an extra term
\begin{equation}
    g_{\sigma}(x) = \frac{1}{2} \norm{x - N_\sigma(x)}^2 + \frac{1}{2}\norm{x - \Pi_C(x)}^2
\end{equation}
with for example $C = [-1, 2]^{n}$ for images taking their values in $[0;1]^n$. Assuming that $L_{g_\sigma}<1$, the potential $\phi_\sigma$ obtained with Proposition~\ref{prop:proxdenoiser} then verifies: 
\begin{proposition}[Properties of $\phi_\sigma$]
With the above parameterization of $g_\sigma$, let $\phi_\sigma$ obtained from $g_\sigma$ via Proposition~\ref{prop:proxdenoiser}, then $\phi_\sigma$ verifies 
\begin{itemize}
  \item[(i)] $\phi_\sigma$ is \textit{lower-bounded}.
  \item[(ii)] $\phi_\sigma$ is $\frac{L}{L+1}$-\textit{weakly convex}.
  \item[(iii)] $\phi_\sigma$ is $\mathcal{C}^\infty$ and $\nabla \phi_\sigma$ is $\frac{L}{1-1}$-\textit{Lipschitz continuous} on $\Im(D_\sigma)$.
  \item[(iv)] $\phi_\sigma$ is \textit{coercive} when $g_\sigma$ is coercive.
  \item[(v)] \textit{$\phi_\sigma$ is real analytic on $\Im(D_\sigma)$}.
\end{itemize}
\end{proposition}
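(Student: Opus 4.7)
The plan is to handle the five properties in the order listed, with (i)--(iv) being short consequences of Proposition~\ref{prop:proxdenoiser} applied to the specific parameterization~\eqref{eq:gsigma_proxpnp}, and (v) requiring more work via the analytic inverse function theorem.

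For (i), the aim is to combine Proposition~\ref{prop:proxdenoiser}(ii), which yields $\phi_\sigma(x)\geq g_\sigma(x)+K$ everywhere on $\R^n$, with the observation that $g_\sigma(x)=\tfrac12\|x-N_\sigma(x)\|^2+\tfrac12\|x-\Pi_C(x)\|^2\geq 0$. Hence $\phi_\sigma\geq K$. Property (ii) is then immediate from Proposition~\ref{prop:proxdenoiser}(i). For (iii), one only needs to observe that the softplus $s_\epsilon$ is of class $\mathcal{C}^\infty$, so $N_\sigma$ is $\mathcal{C}^\infty$, and the distance-to-$C$ term is $\mathcal{C}^\infty$ on $\Im(D_\sigma)$ when $\Im(D_\sigma)$ avoids $\partial C$ (more on this below); then Proposition~\ref{prop:proxdenoiser}(iii) with $k=\infty$ gives $\phi_\sigma\in\mathcal{C}^\infty(\Im(D_\sigma))$ and the claimed Lipschitz constant of $\nabla\phi_\sigma$. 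Property (iv) again uses $\phi_\sigma\geq g_\sigma+K$: coercivity of $g_\sigma$ transfers directly since $\phi_\sigma=+\infty$ outside $\Im(D_\sigma)$.

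The core of the proof is (v). I would proceed as follows. First, the softplus $s_\epsilon(t)=\frac1\epsilon\log(1+e^{\epsilon t})$ is real analytic on $\R$ because $e^{\epsilon t}$ is entire, $1+e^{\epsilon t}>0$, and $\log$ is analytic on $(0,\infty)$. Using Lemma~\ref{lem:stability-analytic}(i), the network $N_\sigma$, as an alternation of affine maps and componentwise softplus applications, is real analytic on $\R^n$. Consequently $g_\sigma(x)=\tfrac12\|x-N_\sigma(x)\|^2$ is real analytic, and by Lemma~\ref{lem:stability-analytic}(ii) so is $\nabla g_\sigma$, hence $D_\sigma=\id-\nabla g_\sigma$ is real analytic as well. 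The assumption $L_{g_\sigma}<1$ gives $J_{D_\sigma}(x)=\id-\nabla^2 g_\sigma(x)\succ 0$ for every $x$, so by the Real Analytic Inverse Function Theorem (Lemma~\ref{lem:stability-analytic}(iii)), $D_\sigma^{-1}$ is real analytic on $\Im(D_\sigma)$. Plugging into the explicit formula~\eqref{eq:phi} from Proposition~\ref{prop:proxdenoiser}(i),
\begin{equation}
\phi_\sigma(x)=g_\sigma(D_\sigma^{-1}(x))-\tfrac12\|D_\sigma^{-1}(x)-x\|^2+K,
\end{equation}
and using stability of analyticity under sum, product and composition (Lemma~\ref{lem:stability-analytic}(i)) yields analyticity of $\phi_\sigma$ on $\Im(D_\sigma)$.

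The main obstacle is the coercivizing term $\tfrac12\|x-\Pi_C(x)\|^2$, which is only $\mathcal{C}^1$ and fails to be analytic across $\partial C$. My plan to handle this is to treat (v) in two steps: either (a) verify that $\Im(D_\sigma)$ is contained in the interior of $C$ (so that $\Pi_C(x)=x$ and the term vanishes locally, hence analyticity holds through the formula above), or (b) argue separately that on each of the open polyhedral cells on which $\Pi_C$ is affine, the extra term is polynomial and thus analytic, and that $\Im(D_\sigma)$ sits inside a single such cell under the normalization $C=[-1,2]^n$ for images in $[0,1]^n$. In either case, analyticity of $g_\sigma$, and then of $\phi_\sigma$, is preserved on $\Im(D_\sigma)$; the same restriction of the domain also clarifies the $\mathcal{C}^\infty$ claim used in (iii).
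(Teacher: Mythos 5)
Your proof follows essentially the same route as the paper's: (i)--(iv) are read off directly from Proposition~\ref{prop:proxdenoiser} together with the non-negativity of $g_\sigma$, and (v) chains analyticity of softplus through sums, compositions, differentiation, the real analytic inverse function theorem (using $J_{D_\sigma}\succ 0$ from $L_{g_\sigma}<1$), and the explicit formula~\eqref{eq:phi}. Your additional care about the coercivizing term $\tfrac12\|x-\Pi_C(x)\|^2$ failing to be analytic across $\partial C$ is a genuine subtlety that the paper's own proof silently ignores, and your proposed fix (locating $\Im(D_\sigma)$ away from $\partial C$, or within a single affine cell of $\Pi_C$) is a reasonable way to close it.
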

\begin{proof} 

\begin{itemize}
     \item[(i)] As $g_\sigma$ is non-negative, this follows directly from Proposition~\ref{prop:proxdenoiser}(ii).
    \item[(ii)] Proposition~\ref{prop:proxdenoiser}(i)
    \item[(iii)] Proposition~\ref{prop:proxdenoiser}(iii)
    \item[(iv)] This follows directly from $\phi_\sigma(x) \geq g_\sigma(x)$ (Proposition~\ref{prop:proxdenoiser}(ii)).
    \item[(v)] $g_\sigma$ is parameterized with the softplus activation, which is a real analytic function. By composition and sum of real analytic functions (Lemma~\ref{lem:stability-analytic}(i)) $N_\sigma$ and then subsequently $g_\sigma$ are real analytic functions. Then, by Lemma~\ref{lem:stability-analytic}(ii), $\nabla g_\sigma$ and thus $D_\sigma$ are also real analytic. By the inverse function theorem (Lemma~\ref{lem:stability-analytic}(iii)), as $\forall x \in \R^n, J_{D_\sigma}(x)>0$, $D_\sigma^{-1}$ is then real analytic on $\Im(D_\sigma)$. We finally obtain, using the expression of $\phi_\sigma$ on $\Im(D_\sigma)$ \eqref{eq:phi}, again by sum and composition, that $\phi_\sigma$ is real analytic on $\Im(D_\sigma)$.  
\end{itemize}
\end{proof}

\subsection{Relaxed Denoiser}

Once trained,  the Gradient Step Denoiser ${D_\sigma = \id - \nabla g_\sigma}$ can be relaxed as in~\cite{hurault2021gradient} with a parameter $\gamma \in [0,1]$
\begin{equation}
    \label{eq:relaxed_proximal_denoiser}
    D^\gamma_\sigma = \gamma D_\sigma + (1-\gamma)\id = \id - \gamma \nabla g_\sigma.
\end{equation}
Applying Proposition~\ref{prop:proxdenoiser} with $g^\gamma_\sigma = \gamma g_\sigma$ which has a $\gamma L_{g_\sigma}$-Lipschitz gradient, we get that if $\gamma L_g < 1$, there exists a $\frac{\gamma L_{g_\sigma}}{\gamma L_{g_\sigma}+1}$-weakly convex ${\phi^\gamma_\sigma
}$ 
such that
\begin{equation}\label{eq:relaxed_proximal_denoiser_phi}
D^\gamma_\sigma = \prox_{\phi^\gamma_\sigma},
\end{equation}
satisfying $\phi^0_\sigma=0$ and $\phi^1_\sigma=\phi_\sigma$. 
Hence, one can control the weak convexity of the regularization function by relaxing the proximal denoising operator $D^\gamma_\sigma$.

\section{PnP Proximal Gradient Descent (PnP-PGD)}\label{sec3}

In this section, we give convergence results for the  ProxPnP-PGD algorithm
\begin{equation}
\begin{split}
x_{k+1} &= D_\sigma \circ (\id - \lambda f)(x_k) \\ 
&= \prox_{\phi_{\sigma}} \circ (\id - \lambda f)(x_k)
\end{split}
\end{equation}
which is the PnP version of PGD, with plugged Proximal Denoiser \eqref{eq:proximal_denoiser}. The convergence proof proposed in~\cite{hurault2022proximal} for this algorithm is suboptimal as the weak convexity of $\phi_\sigma$ is not exploited. Doing so, we improve here the condition on the regularization parameter $\lambda$ for convergence.

\subsection{Proximal Gradient Descent  with a weakly convex function}
\label{sec:pgd}

We consider the general minimization problem 
\begin{equation}
    \label{eq:F}
    \min_x F(x):= f(x) + \phi(x) ,
\end{equation}
for a smooth nonconvex function~$f$ and a weakly convex function $\phi$ that are both bounded from below. We study under which conditions the classical Proximal Gradient Descent
\begin{equation}
    \label{eq:PGD}
    x_{k+1} \in \prox_{\tau \phi} \circ (\id - \tau \nabla f)(x_k)
\end{equation}
converges to a stationary point of~\eqref{eq:F}. 
We first show convergence of the function
values, and then convergence of the iterates, if $F$ verifies the Kurdyka-Łojasiewicz (KŁ) property (Definition~\ref{def:KŁ}). 

\begin{theorem}[Convergence of the PGD algorithm~\eqref{eq:PGD}]
\label{thm:PGD}
Assume $f$ and $\phi$ proper lsc, bounded from below with $f$ differentiable with $L_f$-Lipschitz gradient, and $\phi$ $M$-weakly convex. Then for ${\tau < \max( \frac{2}{L_f +M}, \frac{1}{L_f})}$, the iterates~\eqref{eq:PGD} verify 
 \begin{itemize}
        \item[(i)] $(F(x_k))$ is non-increasing and converges.
        \item[(ii)] The sequence has finite length \emph{i.e.} $\sum_{k=0}^{+\infty} \norm{x_{k+1}-x_k}^2 < +\infty$ and $\norm{x_{k+1}-x_k}$ converges to $0$ at rate $\min_{k < K} \norm{x_{k+1}-x_k} = \mathcal{O}(1/\sqrt{K})$.
        \item[(iii)] All cluster points of the sequence $x_k$ are stationary points of $F$.
        \item[(iv)] If the sequence $(x_k)$ is bounded and if $F$ verifies the KŁ property at the cluster points of $(x_k)$, then  $(x_k)$ converges, with finite length, to a stationary point of $F$.
    \end{itemize}
\end{theorem}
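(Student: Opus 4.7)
The plan is to follow the classical three-ingredient recipe for nonconvex descent methods: establish (a) a sufficient decrease of the form $F(x_{k+1}) \le F(x_k) - c\norm{x_{k+1}-x_k}^2$, (b) a relative-error bound $\operatorname{dist}(0,\partial F(x_{k+1})) \le c'\norm{x_{k+1}-x_k}$, and (c) continuity of $F$ along the iterates. Items (i)--(iii) then fall out directly, and (iv) follows from the abstract Kurdyka--Łojasiewicz convergence framework of \cite{attouch2013convergence}.

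For the sufficient decrease I would split the ``$\max$'' into its two regimes and argue separately. In the weakly convex regime $\tau < 2/(L_f+M)$ I exploit the three-points inequality of Proposition~\ref{prop:weaklyconvex}(iii) applied to the $\tau M$-weakly convex function $\tau\phi$ with $z = x_k - \tau\nabla f(x_k)$, $z^+ = x_{k+1}$ and test point $x_k$; expanding the squared distances and combining with the descent lemma (Proposition~\ref{prop:descent_lemma}) applied to $f$ should produce
\begin{equation}
    F(x_{k+1}) \le F(x_k) - \left(\frac{2 - \tau M}{2\tau} - \frac{L_f}{2}\right) \norm{x_{k+1}-x_k}^2,
\end{equation}
with a strictly positive coefficient exactly when $\tau(L_f+M) < 2$. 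In the alternative regime $\tau < 1/L_f$ I drop weak convexity altogether and use only the defining variational inequality of the prox step, $\phi(x_{k+1}) + \frac{1}{2\tau}\norm{x_{k+1} - (x_k-\tau\nabla f(x_k))}^2 \le \phi(x_k) + \frac{\tau}{2}\norm{\nabla f(x_k)}^2$; combined with the descent lemma this yields sufficient decrease with coefficient $(1-\tau L_f)/(2\tau)$.

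From either form of sufficient decrease, (i) is immediate since $F$ is bounded below, so $(F(x_k))$ is nonincreasing and convergent; telescoping then produces $\sum_k \norm{x_{k+1}-x_k}^2 < +\infty$, and bounding $\min_{k<K}\norm{x_{k+1}-x_k}^2$ by the Cesàro mean yields the $\mathcal{O}(1/\sqrt K)$ rate of (ii). For (iii), the optimality condition of the prox step gives $\tfrac{1}{\tau}(x_k-x_{k+1}) - \nabla f(x_k) \in \partial\phi(x_{k+1})$, hence by the sum rule (Proposition~\ref{prop:subdif_sum})
\begin{equation}
    \omega_{k+1} := \nabla f(x_{k+1}) - \nabla f(x_k) + \tfrac{1}{\tau}(x_k-x_{k+1}) \in \partial F(x_{k+1}),
\end{equation}
with $\norm{\omega_{k+1}} \le (L_f + 1/\tau)\norm{x_{k+1}-x_k} \to 0$. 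At any cluster point $x^*$ with $x_{k_j} \to x^*$, after checking $F(x_{k_j}) \to F(x^*)$, the closedness of $\partial^{lim} F$ (Proposition~\ref{prop:subdif_closed}) delivers $0 \in \partial F(x^*)$.

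The last item (iv) is then a direct application of the abstract KŁ argument of \cite{attouch2013convergence}: the sufficient decrease, the relative-error bound just established, continuity of $F$ along the iterates, the KŁ property at cluster points and boundedness of $(x_k)$ together imply $\sum_k \norm{x_{k+1}-x_k} < +\infty$, hence the whole sequence converges to a single stationary point of $F$. The step I expect to be the main obstacle is exactly the continuity statement $F(x_{k_j}) \to F(x^*)$ needed in (iii)--(iv): lower semicontinuity of $\phi$ only gives ``$\le$'', and in the nonconvex setting one does not have the usual convex machinery to reverse the inequality, so one must carefully feed $x^*$ as a competitor in the proximal minimality that defines the iterate in order to produce the matching $\limsup$ bound.
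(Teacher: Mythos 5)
Your proposal is correct and follows the same overall architecture as the paper's proof (sufficient decrease, telescoping, prox-optimality for the subgradient bound, competitor argument for continuity of $F$ along the subsequence, then the abstract KŁ theorem of \cite{attouch2013convergence} via conditions H1--H3). Two small differences are worth noting. First, for the sufficient decrease you invoke the three-points inequality (Proposition~\ref{prop:weaklyconvex}(iii)) applied to $\tau\phi$, whereas the paper uses the subgradient inequality for weakly convex functions (Proposition~\ref{prop:weaklyconvex}(ii)) on the prox optimality condition; the two routes are algebraically equivalent and yield the same coefficient $\frac{1}{\tau}-\frac{L_f+M}{2}$. Second, you explicitly treat the alternative regime $\tau<\frac{1}{L_f}$ by comparing with $x_k$ in the prox minimization and dropping weak convexity, obtaining the coefficient $\frac{1-\tau L_f}{2\tau}$; the paper's written proof only derives the condition $\tau<\frac{2}{L_f+M}$ and does not spell out the second branch of the stated $\max$, so your argument is actually the more complete one with respect to the theorem's hypothesis (the second branch only matters when $M>L_f$). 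Your relative-error bound $\norm{\omega_{k+1}}\le(L_f+\frac{1}{\tau})\norm{x_{k+1}-x_k}$, obtained by adding and subtracting $\nabla f(x_{k+1})$, is also the clean form of what the paper writes somewhat loosely in its verification of H2. No gaps.
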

\begin{remark}
\begin{itemize}
    \item[(i)] The boundedness of the iterates $(x_k)$ is verified as soon as the objective $F$ is \textbf{coercive}. Indeed, it ensures that $\{ F(x) \leq F(x_0) \}$ is bounded and, since $F(x_k)$ is non-increasing (Theorem~\ref{thm:PGD}(i)), that the iterates remain bounded. 
    \item[(ii)] We explained in Section~\ref{sec:defs_prop} that, in practice, the KŁ property is verified as soon as $F$ is real analytic.
\end{itemize}
\end{remark}
\begin{proof} The proof follows standard arguments of the convergence analysis of the PGD in the nonconvex setting~\cite{beck2009fast,attouch2013convergence,ochs2014ipiano}.

\textbf{(i)} 
Relation~\eqref{eq:PGD} leads to
$\frac{x_{k}- x_{k+1}}{\tau} - \nabla f(x_k) \in \partial  \phi(x_{k+1})$,
by definition of the proximal operator. 
As $\phi$ is $M$-weakly convex, we have from Proposition~\ref{prop:weaklyconvex} (ii) 
\begin{equation}
\label{eq:after_weak_convexity}
\begin{split}
\phi(x_{k}) \geq &\phi(x_{k+1})+ \frac{\norm{x_{k}-x_{k+1}}^2}{\tau}-\frac{M}{2} \norm{ x_{k}-x_{k+1}}^2 +  \langle  \nabla f(x_k), x_{k+1}-x_{k} \rangle.
\end{split}
\end{equation}
The descent Lemma (Proposition \ref{prop:descent_lemma}) gives for $f$:
\begin{equation}
\begin{split}
f(x_{k+1}) \leq f(x_{k}) +  \langle  \nabla f(x_k), x_{k+1}-x_{k} \rangle + \frac{L_f}{2} \norm{ x_{k}-x_{k+1}}^2.
\end{split}
\end{equation}
Combining both inequalities, for $F=f+\phi$, we obtain
\begin{equation}
\label{eq:decrease_F}
 F(x_{k}) \geq F(x_{k+1})+ \left(\frac1\tau -\frac{M + \lambda L_f}{2} \right)    \norm{x_{k} - x_{k+1}}^2  .
\end{equation}
Therefore, if $\tau < 2/({M+\lambda L_f})
$,  $(F(x_k))$ is non-increasing. As $F$ is assumed lower-bounded, $(F(x_k))$ converges. We call $F^*$ its limit.\\
\textbf{(ii)}
            Summing~\eqref{eq:decrease_F} over $k=0,1,...,m$ gives
            \begin{equation}
                \begin{split}
                    \sum_{k=0}^m \norm{x_{k+1}-x_k}^2 \leq& \frac{1}{\frac1\tau -\frac{M + L_f}{2} }\left(F(x_0) -F(x_{m+1})\right) \\
                    \leq& \frac{1}{\frac1\tau -\frac{M + L_f}{2} } \left(F(x_0)-F^*\right) .
                \end{split}
            \end{equation}
            Therefore, $\lim_{k\to\infty} \norm{x_{k+1}-x_k} = 0$ with the convergence rate 
            \begin{equation}
            \hspace*{-1cm}\gamma_k = \min_{0 \leq i \leq k}\norm{x_{i+1}-x_i}^2 \leq \frac{1}{k}  \frac{F(x_0)-F^*}{\frac1\tau -\frac{M + L_f}{2} }.
            \end{equation}
            
\textbf{(iii)}
 Suppose that a subsequence $(x_{k_i})_i$ is converging towards $x$. Let us show that $x$ is a critical point of $F$.
By optimality of the proximal operator of $\phi$, for all $k \geq 0$
        \begin{equation} \label{eq:subdif_PGD}
                \frac{x_{k_i+1}-x_{k_i}}{\tau} - \nabla f(x_{k+1}) \in \partial \phi(x_{k_i+1}),
        \end{equation}
        where $\partial \phi$ stands for the limiting subdifferential~\eqref{eq:limiting_subdifferential}. In other words, for $\omega_{k_i} = \frac{x_{k_i}-x_{k_i-1}}{\tau} - \nabla f(x_{k_i})$, $(\omega_{k_i}, x_{k_i}) \in \operatorname{Graph} \partial \phi$. From the continuity of $\nabla f$, we have $ \nabla f (x_{k_i}) \to \nabla f (x)$. As $\norm{x_{k_i+1}-x_{k_i}}\to 0$, we get
        \begin{equation}
            \frac{x_{k_i}-x_{k_i-1}}{\tau} - \nabla f(x_{k_i}) \to - \nabla f (x).
        \end{equation}
        By closeness of $\partial \phi$ (Proposition~\ref{prop:subdif_closed}), 
        it remains to show that $\phi(x_{k_i}) \rightarrow \phi(x)$, to get $- \nabla f (x) \in \partial \phi(x)$, i.e. $x$ is a critical point of $F$ (by Proposition~\ref{prop:subdif_sum}). 
        To do so, we first use the fact that $g$ is lsc  to obtain
        \begin{equation}
            \liminf_{i \to \infty} \phi(x_{k_i})  \geq \phi(x).
        \end{equation}
        On the other hand, by optimality of the $\prox$, 
        \begin{equation} 
        \begin{split}
                    &\phi(x_{k_i+1}) + \langle x_{k_i+1}-x_{k_i}, \nabla f(x_{k_i}) \rangle \frac{1}{2 \tau}\norm{x_{k_i+1}-x_{k_i}}^2 
                    \\ &\leq \phi(x) + \langle x-x_{k_i}, \nabla f(x_{k_i}) \rangle \frac{1}{2 \tau}\norm{x-x_{k_i}}^2 .
        \end{split}
        \end{equation} 
        Using that $x_{k_i} \to x$ and $x_{k_i+1}-x_{k_i} \to 0$ when $i \to +\infty$, we get
        \begin{equation}
        \limsup_{i \to \infty} \phi(x_{k_i}) \leq \phi(x),
        \end{equation}
        and 
        \begin{equation}
        \lim_{i \to \infty} \phi(x_{k_i}) = \phi(x).
        \end{equation}

        \item[(iv)] We wish to apply Theorem 2.9 from~\cite{attouch2013convergence}. We need to verify that the sequence $(x_k)$ satisfies the three conditions H1, H2, and~H3 specified in~\cite[Section 2.3]{attouch2013convergence}:
        \begin{itemize}
        \item \textit{ H1 : Sufficient decrease condition} $\forall k \in \mathbb{N}$, 
        \begin{equation}
            f(x_{k+1}) + a \norm{x_{k+1}-x_k}^2 \leq F(x_k) .
        \end{equation}
        \item \textit{ H2 : Relative error condition}
        $\forall k \in \mathbb{N}$, there exists $\omega_{k+1} \in \partial F(x_{k+1})$ such that 
        \begin{equation}
            \norm{\omega_{k+1}} \leq b\norm{x_{k+1}-x_k} .
        \end{equation}
        \item \textit{ H3 : Continuity condition}
        There exists a subsequence $(x_{k_i})_{i\in\mathbb{N}}$ and $\hat x \in \mathbb{R}^n$ such that 
        \begin{equation}
           x_{k_i} \to \hat x \ \ \text{and} \ \ F(x_{k_i}) \to F(\hat x) \ \ \text{as} \ \ i \to +\infty .
        \end{equation}
    \end{itemize}

    Condition H1 corresponds to the sufficient decrease condition shown in (i). For condition H3, we use that, as $(x_k)$ is bounded, there exists a subsequence $(x_{k_i})$ converging towards $y$.
        Then $F(x_{k_i}) \to F(y)$ has been shown in (iii). Finally, for condition H2, from~\eqref{eq:after_weak_convexity}, we had that 
        \begin{equation}
        z_{k+1} +\lambda \nabla f(x_{k+1}) = \frac{x_{k+1} - x_k}{\tau}
        \end{equation}
        where $z_{k+1} \in \partial \phi(x_{k+1})$. Therefore, 
        \begin{equation}
       \hspace{-.5cm} \norm{z_{k+1} + \lambda\nabla f(x_{k+1})} = \frac{\norm{x_{k+1} - x_k}}{\tau}
        \end{equation} which gives condition H2. 
\end{proof}

\subsection{ProxPnP Proximal Gradient Descent (ProxPnP-PGD)}\label{sec:pgd_prox}

Equipped with the convergence of PGD, we can now study the convergence of \emph{ProxPnP-PGD}, the PnP-PGD algorithm with plugged Proximal Denoiser~\eqref{eq:proximal_denoiser}:
\begin{equation}
    \label{eq:ProxPnP-PGD}
     x_{k+1} = D_\sigma \circ (\id - \lambda f)(x_k) 
     = \prox_{\phi_{\sigma}} \circ (\id - \lambda f)(x_k).
\end{equation}
This algorithm corresponds to the PGD algorithm~\eqref{eq:PGD} with fixed stepsize~$\tau=1$ and targets stationary points of the functional $F$ defined by:
\begin{equation}
    \label{eq:F_tau}
    F := \reglambda  f + \phi_\sigma.
\end{equation}

At each iteration, $x_k \in \Im(D_\sigma)$, and thus $\phi_\sigma$ defined in~\eqref{eq:phi} is tractable along the algorithm. The value of the objective function~\eqref{eq:F_tau} at $x_k$ is 
\begin{equation}
    F(x_k) = \reglambda  f(x_k) +  g_\sigma(z_k) -\frac{1}{2}\norm{z_k-x_k}^2 + K
\end{equation}
where ${z_k = (\id - \lambda \nabla f)(x_{k-1})=D_\sigma^{-1}(x_k)}$.

The following result, obtained from Theorem~\ref{thm:PGD},  improves~\cite{hurault2022proximal} using the fact that the potential $\phi_\sigma$ is not any nonconvex function but a weakly convex one. 
\begin{corollary}[Convergence of ProxPnP-PGD]
    \label{cor:PnP-PGD}
    Let $f : \mathbb{R}^n \to \mathbb{R} \cup \{+\infty\}$ be differentiable with $L_f$-Lipschitz gradient, bounded from below. Assume that $L_{g_\sigma}$ the Lipschitz constant of $\nabla g_\sigma$ verifies ${L_{g_\sigma}<1}$. Then, for ${\reglambda L_f < \frac{L_{g_\sigma}+2}{L_{g_\sigma}+1}}$, the iterates $(x_k)$ given by the iterative scheme~\eqref{eq:ProxPnP-PGD} verify
    \begin{itemize}
        \item[(i)] $(F(x_k))$ is non-increasing and converges.
        \item[(ii)] The sequence has finite length \emph{i.e.} $\sum_{k=0}^{+\infty} \norm{x_{k+1}-x_k}^2 < +\infty$ and $\norm{x_{k+1}-x_k}$ converges to $0$ at rate $\min_{k < K} \norm{x_{k+1}-x_k} = \mathcal{O}(1/\sqrt{K})$.
        \item[(iii)] If $f$ is real analytic, and $g_\sigma$ is coercive, then the iterates $(x_k)$ converge towards a critical point of $F$. 
    \end{itemize}
\end{corollary}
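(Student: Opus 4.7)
The plan is to read Corollary~\ref{cor:PnP-PGD} as a direct specialization of Theorem~\ref{thm:PGD} with $\tau=1$, where the smooth term is $\tilde f := \lambda f$ (so $L_{\tilde f} = \lambda L_f$) and the weakly convex term is $\phi_\sigma$ obtained from Proposition~\ref{prop:proxdenoiser}. The iteration~\eqref{eq:ProxPnP-PGD} is exactly $x_{k+1} \in \prox_{\tau \phi_\sigma} \circ (\id - \tau \nabla \tilde f)(x_k)$ with $\tau=1$, so no translation work is needed beyond tracking constants.

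Before invoking Theorem~\ref{thm:PGD}, I would verify its hypotheses for the pair $(\tilde f, \phi_\sigma)$. Lipschitz smoothness of $\tilde f$ with constant $\lambda L_f$ is immediate from the hypothesis on $f$; boundedness below of $\tilde f$ too. By Proposition~\ref{prop:proxdenoiser}(i), $\phi_\sigma$ is proper lsc and $M$-weakly convex with $M=\tfrac{L_{g_\sigma}}{L_{g_\sigma}+1}<1$; by Proposition~\ref{prop:proxdenoiser}(ii) we have $\phi_\sigma \geq g_\sigma + K \geq K$ since $g_\sigma\geq 0$, giving the lower bound. The stepsize condition of Theorem~\ref{thm:PGD}, namely $\tau<\max\bigl(\tfrac{2}{L_{\tilde f}+M}, \tfrac{1}{L_{\tilde f}}\bigr)$, with $\tau=1$ becomes $1<\tfrac{2}{\lambda L_f + M}$, i.e.\ $\lambda L_f < 2-M = 2-\tfrac{L_{g_\sigma}}{L_{g_\sigma}+1} = \tfrac{L_{g_\sigma}+2}{L_{g_\sigma}+1}$, which is precisely the assumption. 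Parts (i) and (ii) of the corollary then follow immediately from Theorem~\ref{thm:PGD}(i)(ii).

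For part (iii), I need to check the two extra hypotheses of Theorem~\ref{thm:PGD}(iv): boundedness of the iterates and the KŁ property of $F$ at cluster points. For boundedness, I use that $g_\sigma$ coercive yields $\phi_\sigma$ coercive (since $\phi_\sigma\geq g_\sigma+K$), and then $F=\lambda f + \phi_\sigma$ is coercive (as $\lambda f$ is bounded below). Combined with the monotonicity from (i), the sequence stays in the sublevel set $\{F\leq F(x_0)\}$, which is bounded. For KŁ, I use that $f$ is real analytic by assumption and, by the properties of $g_\sigma$ proven earlier, $\phi_\sigma$ is real analytic on the open set $\Im(D_\sigma)$; hence $F$ is real analytic on $\Im(D_\sigma)$, and the Łojasiewicz inequality (a special case of KŁ) holds at every critical point of $F$ lying in $\Im(D_\sigma)$. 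Applying Theorem~\ref{thm:PGD}(iv) then concludes.

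The only point that requires a brief justification is that cluster points of $(x_k)$ actually lie in $\Im(D_\sigma)$, where analyticity applies. This is not an obstacle: each iterate $x_{k+1}$ is in the range of $D_\sigma$ by construction, and any cluster point $x^*$ satisfies $F(x_{k_i})\to F(x^*)$ with $F(x^*)$ finite (this continuity is established inside the proof of Theorem~\ref{thm:PGD}(iii)), forcing $x^* \in \dom \phi_\sigma = \Im(D_\sigma)$, which is open by Proposition~\ref{prop:proxdenoiser}(i). Hence $F$ is real analytic on a neighborhood of every cluster point and the KŁ hypothesis of Theorem~\ref{thm:PGD}(iv) is satisfied. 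The whole argument is a bookkeeping exercise once the stepsize arithmetic $1 < 2/(\lambda L_f + M)$ is verified; no new estimates are required.
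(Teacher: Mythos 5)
Your overall route is exactly the paper's: specialize Theorem~\ref{thm:PGD} to the pair $(\lambda f,\phi_\sigma)$ with $\tau=1$, check that $M=\tfrac{L_{g_\sigma}}{L_{g_\sigma}+1}$, and observe that the stepsize condition collapses to $\lambda L_f < 2-M = \tfrac{L_{g_\sigma}+2}{L_{g_\sigma}+1}$. Parts (i)--(ii) and the coercivity/boundedness argument for (iii) are all fine and match the paper.

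There is, however, one step in your part (iii) that does not hold as written: you claim that a cluster point $x^*$ must lie in $\Im(D_\sigma)$ because $F(x^*)$ is finite and $\dom\phi_\sigma=\Im(D_\sigma)$. That identification is not justified and is in general false for the function $\phi_\sigma$ of Proposition~\ref{prop:proxdenoiser}. The paper deliberately distinguishes $\hat\phi_\sigma$ (which equals $+\infty$ outside $\Im(D_\sigma)$ but cannot be shown weakly convex because $\Im(D_\sigma)$ need not be convex) from the weakly convex $\phi_\sigma$ supplied by \cite{gribonval2020characterization}; weak convexity forces $\dom\phi_\sigma$ to be convex, so $\dom\phi_\sigma$ generally contains $\Im(D_\sigma)$ strictly, and finiteness of $\phi_\sigma(x^*)$ does not place $x^*$ in $\Im(D_\sigma)$. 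The correct argument --- the one the paper uses --- is a fixed-point argument: since $\norm{x_{k+1}-x_k}\to 0$ by (ii) and the operator $T=D_\sigma\circ(\id-\lambda\nabla f)$ is continuous, any cluster point $x^*$ of $(x_k)$ satisfies $T(x^*)=x^*$, hence $x^*\in\Im(D_\sigma)$, which is open; real analyticity of $F$ on a neighborhood of $x^*$ then gives the K\L{} property needed for Theorem~\ref{thm:PGD}(iv). With that substitution your proof is complete.
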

\begin{proof}
    This is a direct application of Theorem~\ref{thm:PGD} with $\lambda f$ the smooth function and with $\phi_\sigma$ the weakly convex function. The stepsize condition 
    \begin{equation}
        \tau < \max( \frac{2}{\lambda L_f + \frac{L}{L+1}}, \frac{1}{\lambda L_f})
    \end{equation}
    becomes, with $\tau = 1$, 
    \begin{align}
    &\Leftrightarrow \lambda L_f + \frac{L_{g_\sigma}}{L_{g_\sigma}+1} < 2 \text{ or } \lambda L_f < 1 \\
    &\Leftrightarrow \lambda L_f < \frac{L_{g_\sigma}+2}{L_{g_\sigma}+1}.
    \end{align}
    For (iii), as shown in Section~\ref{sec:more_details_reg_proxpnp}, $\phi_\sigma$ is real analytic on $\Im(D_\sigma)$, lower-bounded and coercive if $g_\sigma$ is coercive. As $f$ is assumed real analytic, by Lemma~\ref{lem:stability-analytic}, $F = \lambda f + \phi_\sigma$ is also real analytic and thus KŁ on $\Im(D_\sigma)$. 
    This is enough for the proof of Theorem~\ref{thm:PGD} to hold because any limit point of $(x_k)$ is a fixed point of the PnP-PGD operator ${D_\sigma \circ (\id - \lambda f)}$ and thus belongs to $\Im(D_\sigma)$. 
\end{proof}

By exploiting the weak convexity of $\phi_\sigma$, the convergence condition $\lambda L_f<1$  of~\cite{hurault2022proximal} is here replaced by  $\reglambda L_f < \frac{L_{g_\sigma}+2}{L_{g_\sigma}+1}$. Even if the bound is improved, we are still limited to regularization parameters satisfying $\lambda L_f<2$. 

\medbreak 

In the same way, it was proven in \cite[Theorem 4.3]{hurault2022proximal} that the PnP version of Douglas-Rachford Splitting (DRS) scheme with Proximal denoiser (ProxPnP-DRS), which writes 
\begin{subequations}\label{eq:ProxPnP-DRS}
\begin{empheq}[left=\empheqlbrace]{align}
    y_{k+1} &\in \prox_{\lambda f}(x_k) \label{eq:ProxPnP-DRS_y}\\ 
     z_{k+1} &= \prox_{\phi_\sigma}(2y_{k+1}-x_{k}) \label{eq:ProxPnP-DRS_z}\\ 
     x_{k+1} &=  x_{k} + 2\beta (z_{k+1}-y_{k+1}) , \label{eq:ProxPnP-DRS_x}
\end{empheq}
\end{subequations}
converges towards a critical point of $\lambda f + \phi_\sigma$ provided $\lambda L_f<1$ (see Section~\ref{sec:DRS} for more details). 

\medbreak 

In both cases, the value of the regularization trade-off parameter $\lambda$ is then limited. This is an issue when restoring an image with mild degradations for which relevant solutions are obtained with a low amount of regularization and a dominant data-fidelity term in $F = \lambda f + g$. We could argue that this is a not a problem as the regularization strength is also regulated by the $\sigma$ parameter, which we are free to tune manually. However, it is observed in practice, for instance in \cite{hurault2021gradient}, that the performance of PnP method greatly benefits from the ability to tune the two regularization parameters. 

Given this limitation, our objective is to design new convergent PnP algorithms with proximal denoiser, and with minimal restriction on the regularization parameter~$\lambda$. We not only wish to adapt the denoiser but also the original optimization schemes of interest. 

\section{PnP Relaxed Proximal Gradient Descent (PnP-$\alpha$PGD)}\label{sec4}

In this section, we study a relaxation of the general Proximal Gradient Descent algorithm, called \emph{$\alpha$PGD}, such that when used with the proximal denoiser, the corresponding PnP scheme \emph{ProxPnP-$\alpha$PGD} can converge for a wider range of regularization parameters~$\lambda$.  

\subsection{$\alpha$PGD algorithm}

We first introduce the $\alpha$PGD algorithm for solving the general problem
\begin{equation}
    \label{eq:pb_foralphaPGD}
    \min_{x \in \mathbb{R}^n} f(x) + \phi(x)
\end{equation}
for $f,\phi : \R^n \to \R \cup \{ +\infty \}$ with $f$ convex and smooth and $\phi$ weakly convex. The algorithm writes, for ${\alpha \in (0,1)}$, 
\begin{subequations} \label{eq:alphaPGD}
\begin{empheq}[left=\empheqlbrace]{align}
q_{k+1} &= (1-\alpha) y_k + \alpha x_k   \label{eq:PGD2_q} \\
x_{k+1} &= \prox_{\tau \phi}(x_k -\tau \nabla f(q_{k+1}))  \label{eq:PGD2_x}\\
y_{k+1} &= (1-\alpha) y_k + \alpha  x_{k+1}. \label{eq:PGD2_y}
\end{empheq}
\end{subequations}  
Algorithm~\eqref{eq:alphaPGD} with $\alpha=1$ exactly corresponds to the PGD algorithm~\eqref{eq:PGD}.
This scheme is reminiscent of~\citep{tseng2008accelerated} (taking $\alpha=\theta_k$ and $\tau=\frac{1}{\theta_k L_f}$ in Algorithm~1 of~\citep{tseng2008accelerated}), which generalizes Nesterov-like accelerated proximal gradient methods \citep{beck2009fast,nesterov2013gradient}. Inspired by~\cite{lan2018optimal}, this scheme was derived from the Primal-Dual algorithm~\citep{CP11} with Bregman proximal operator~\citep{chambolle2016ergodic}. We describe this derivation in Appendix~\ref{app:derivation_from_PD}. 

\paragraph{$\alpha$PGD convergence} In the convex setting, using the convergence result from \citep{chambolle2016ergodic} of the Bregman Primal-Dual algorithm (from which $\alpha$PGD was derived), one can prove convergence of $\alpha$PGD for $\tau \lambda L_f < 2$ and small enough $\alpha$.  
However, in our case, the objective $F = \lambda f + \phi$ is nonconvex and a new convergence result needs to be derived. This is done with the following theorem.



\begin{theorem}[Convergence of $\alpha$PGD~\eqref{eq:alphaPGD}]\label{thm:alphaPGD}
Assume $f$ and $\phi$ proper, lsc, lower-bounded, with $f$ convex and $L_f$-smooth and $\phi$ $M$-weakly convex. Then for $\alpha \in (0,1)$ and
$\tau< \min\left(\frac{1}{\alpha L_f},\frac{\alpha}{M}\right)$,\,
the updates~\eqref{eq:alphaPGD} verify
\begin{itemize}
    \item[(i)] $F(y_k) + \frac{\alpha}{2\tau}\left(1-\frac1\alpha\right)^2 \norm{y_k - y_{k-1}}^2$ is non-increasing and converges.
    \item[(ii)]the sequence $y_k$ has finite length \emph{i.e.} $\sum_{k=0}^{+\infty} \norm{y_{k+1}-y_k}^2 < +\infty$ and $ \norm{y_{k+1}-y_k}$ converges to $0$ at rate $\min_{k < K} \norm{y_{k+1}-y_k} = \mathcal{O}(1/\sqrt{K})$.
    \item[(iii)] All cluster points of the sequence $y_k$ are stationary points of $F$. In particular, if $g$ is coercive, then $y_k$ has a subsequence converging towards a stationary point of $F$.
\end{itemize}
\end{theorem}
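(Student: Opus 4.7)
The overall plan is to construct a Lyapunov function of the form $V_k := F(y_k) + \frac{\alpha}{2\tau}\bigl(1-\tfrac{1}{\alpha}\bigr)^2\|y_k-y_{k-1}\|^2$ and show that it is non-increasing, with a decrement that controls $\|y_{k+1}-y_k\|^2$. This will immediately give (i) (since $F$ is lower bounded, so is $V_k$), telescopes to give (ii), and, combined with coercivity, provides the boundedness required in (iii).

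To produce the one-step decrease I would combine three ingredients at iteration $k$. First, the convexity of $f$ together with the descent lemma applied at the point $q_{k+1}$, and the averaging identities $q_{k+1}-y_k = \alpha(x_k-y_k)$ and $y_{k+1}-q_{k+1} = \alpha(x_{k+1}-x_k)$, yields an upper bound on $f(y_{k+1})$ of the form $f(y_k) + \alpha\langle \nabla f(q_{k+1}), x_{k+1}-y_k\rangle + \tfrac{L_f\alpha^2}{2}\|x_{k+1}-x_k\|^2$. Second, the three-point inequality of Proposition~\ref{prop:weaklyconvex}(iii) applied to $\tau\phi$ (which is $\tau M$-weakly convex) at $z = x_k - \tau\nabla f(q_{k+1})$ with test point $x=y_k$ provides an upper bound on $\phi(x_{k+1})$ containing the mirror inner product $\langle \nabla f(q_{k+1}), y_k-x_{k+1}\rangle$ and quadratic residuals in $\|y_k-x_k\|^2$, $\|x_{k+1}-x_k\|^2$, $\|y_k-x_{k+1}\|^2$. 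Third, the weak convexity bound Proposition~\ref{prop:weaklyconvex}(i) applied to $y_{k+1} = (1-\alpha)y_k+\alpha x_{k+1}$ bounds $\phi(y_{k+1})$ by $(1-\alpha)\phi(y_k)+\alpha\phi(x_{k+1}) + \tfrac{M\alpha(1-\alpha)}{2}\|x_{k+1}-y_k\|^2$. Adding the first bound with $\alpha$ times the second, then injecting into the third, makes the gradient cross-terms cancel exactly, precisely because the averaging introduces a compensating factor $\alpha$. The remaining quadratic terms can be rewritten via $y_k - x_k = -\tfrac{1-\alpha}{\alpha}(y_k-y_{k-1})$ and $x_{k+1}-y_k = \tfrac{1}{\alpha}(y_{k+1}-y_k)$ as multiples of the Lyapunov increments. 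The step-size condition $\tau < 1/(\alpha L_f)$ makes the net $\|x_{k+1}-x_k\|^2$ coefficient nonpositive, so that term can be discarded, while the identity $1 - M\tau(2-\alpha) - (1-\alpha)^2 = (2-\alpha)(\alpha - M\tau)$ shows that $\tau < \alpha/M$ is precisely what is needed to make the net coefficient of $\|y_{k+1}-y_k\|^2$ strictly positive. The inequality then reads $V_{k+1} + \tfrac{(2-\alpha)(\alpha-M\tau)}{2\tau\alpha}\|y_{k+1}-y_k\|^2 \leq V_k$, from which (i) and (ii) follow directly by telescoping.

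For (iii), the finite length from (ii) yields $\|y_{k+1}-y_k\|\to 0$, and the averaging identities then force $\|x_k-y_k\|, \|q_{k+1}-y_k\|, \|x_{k+1}-x_k\|\to 0$, so the sequences $(x_k)$, $(y_k)$, $(q_k)$ share their cluster points. For a cluster point $y^*$ along a subsequence $(y_{k_i})$, the prox optimality gives $\omega_{k+1} := (x_k - x_{k+1})/\tau - \nabla f(q_{k+1}) \in \partial\phi(x_{k+1})$, and $\omega_{k_i+1}\to -\nabla f(y^*)$ by continuity of $\nabla f$. To invoke closedness of the limiting subdifferential (Proposition~\ref{prop:subdif_closed}) I would need $\phi(x_{k_i+1}) \to \phi(y^*)$, which follows from lower semicontinuity of $\phi$ and, for the reverse inequality, from the three-point inequality applied with $x = y^*$ (all residual terms vanish because $\|y^*-x_{k_i}\|, \|x_{k_i+1}-x_{k_i}\|\to 0$). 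The sum rule (Proposition~\ref{prop:subdif_sum}) then delivers $0 \in \partial F(y^*)$. Finally, coercivity of $\phi$ combined with $f$ lower bounded forces $F$ to be coercive, so the bound $F(y_k) \leq V_0$ from (i) keeps $(y_k)$ bounded and guarantees the existence of a cluster point. The main obstacle I foresee is not conceptual but the precise accounting of the quadratic residuals: matching the exact coefficient $(1-\alpha)^2/(2\tau\alpha)$ in the Lyapunov function and making the two step-size thresholds $1/(\alpha L_f)$ and $\alpha/M$ drop out cleanly from the sign conditions is where all the bookkeeping sits.
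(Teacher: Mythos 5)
Your proposal is correct and follows essentially the same route as the paper's proof: the same three ingredients (three-point inequality for the weakly convex prox step, descent lemma plus convexity of $f$ at $q_{k+1}$, and the weak-convexity Jensen inequality at the averaged point $y_{k+1}$), the same averaging identities, the same Lyapunov function with the same decrement coefficient $\frac{(2-\alpha)(\alpha-M\tau)}{2\tau\alpha}$, and the same cluster-point argument via closedness of the limiting subdifferential. The only cosmetic difference is that you apply the three-point inequality directly to $\tau\phi$ at the shifted point rather than to the linearized surrogate $\Phi$, and you specialize to $y=y_k$ from the start instead of at the end.
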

\begin{remark} \label{rem:point_conv}
   With this theorem, $\alpha$PGD is shown to verify convergence of the iterates and of the norm of the residual to $0$. Note that we do not have here the analog of Theorem~\ref{thm:PGD}(iv) on the iterates convergence using the KŁ hypothesis. Indeed, as we detail at the end of the proof, the nonconvex convergence analysis with KŁ functions from~\cite{attouch2013convergence} or~\cite{ochs2014ipiano} does not extend to our case. 
    \end{remark}
    \begin{remark} As shown in Appendix~\ref{app:better_bound}, a better bound on $\tau$ could be found, but with little numerical gain. Moreover, when $\alpha=1$, algorithms $\alpha$PGD~\eqref{eq:alphaPGD} and PGD~\eqref{eq:PGD} are equivalent, but we get a slightly worst bound in Theorem~\ref{thm:alphaPGD} than in Theorem~\ref{thm:PGD} ($ \tau < \min \left( \frac{1}{ L_f}, \frac{1}{M}\right)\leq \frac2{ L_f+M}$). Nevertheless, when used with $\alpha<1$, we next show that the relaxed algorithm is more relevant in the perspective of PnP with proximal denoiser.
    \end{remark}
    \begin{remark}
        The data-fidelity term $f$ is here assumed convex with Lipschitz gradient. This is verified by the classical $L^2$ data-fidelity term that we use in our experiments of Section~\ref{sec:expes}. However, this excludes several data-fidelity terms such as the $L^1$ or the Kullback-Leiber distances. 
    \end{remark}
    
\begin{proof}

The proof relies on Lemma~\ref{prop:descent_lemma} and Proposition~\ref{prop:weaklyconvex}. It follows the general strategy of the proofs in~\cite{tseng2008accelerated},
and also requires the convexity of $f$. 

\noindent {\bf (i) and (ii) :} 
We can write~\eqref{eq:PGD2_x} as 
\begin{equation}
\begin{split}
      x_{k+1} &\in \argmin_{y \in \mathbb{R}^n} \phi(y) + \langle \nabla f(q_{k+1}),y-x_k \rangle + \frac{1}{2\tau} \norm{y-x_k}^2 \\
      &\in \argmin_{y \in \mathbb{R}^n} \phi(y)+ f(q_{k+1}) + \langle \nabla f(q_{k+1}),y-q_{k+1} \rangle + \frac{1}{2\tau} \norm{y-x_k}^2 \\
     &\in \argmin_{y \in \mathbb{R}^n} \Phi(y) +\frac{1}{2\tau} \norm{y-x_k}^2
\end{split}
\end{equation}
with $\Phi(y) := \phi(y) +  f(q_{k+1}) + \langle \nabla f(q_{k+1}),y-q_{k+1}\rangle$.  
As $\phi$ is $M$-weakly convex, so is~$\Phi$.
The three-points inequality of Proposition~\ref{prop:weaklyconvex} (iii) applied to $\Phi$ thus gives  $\forall y \in\mathbb{R}^n$,
\begin{equation}
\begin{split}
\Phi(y)    +    \frac{1}{2\tau}  \norm{y-x_k}^2  \geq \Phi(x_{k+1})  +  \frac{1}{2\tau}\norm{ x_{k+1}-x_k}^2 +  \left(\frac{1}{2\tau}  -  \frac{M}{2} \right)  \norm{x_{k+1}-y}^2
\end{split}
\end{equation}
that is to say,
\begin{equation}
\begin{split}
\label{eq:after_3point}
 & \phi(y) +  f(q_{k+1}) +  \langle \nabla f(q_{k+1}),y-q_{k+1} \rangle +\frac{1}{2\tau} \norm{y-x_k}^2 \geq \\
 &\phi(x_{k+1}) +  f(q_{k+1}) +  \langle \nabla f(q_{k+1}),x_{k+1}-q_{k+1} \rangle + \frac{1}{2\tau}\norm{x_{k+1}-x_k}^2  + \left(\frac{1}{2\tau} - \frac{M}{2} \right) \norm{x_{k+1}-y}^2.
 \end{split}
\end{equation}
Using relation~\eqref{eq:PGD2_y},  the descent Lemma (Proposition~\ref{prop:descent_lemma}) as well as the convexity on~$f$, we obtain
\begin{align} 
        \nonumber& f(q_{k+1}) + \langle \nabla f(q_{k+1}),x_{k+1}-q_{k+1} \rangle \\\nonumber
        =& f(q_{k+1}) +  \left\langle \nabla f(q_{k+1}), \frac{1}{\alpha} y_{k+1} + \left(1-\frac{1}{\alpha}\right) y_{k} - q_{k+1} \right\rangle \\
        \nonumber=&\frac{1}{\alpha} \Big( f(q_{k+1}) + \langle \nabla f(q_{k+1}),y_{k+1} - q_{k+1} \rangle \Big) +
        \left(1-\frac{1}{\alpha}\right)\Big( f(q_{k+1}) + \langle \nabla f(q_{k+1}),y_{k}- q_{k+1} \rangle \Big) \\\nonumber
        \geq &\frac{1}{\alpha} \left(  f(y_{k+1}) - \frac{L_f}{2} \norm{y_{k+1}-q_{k+1}}^2 \right) +
        \left(1-\frac{1}{\alpha}\right) \Big( f(q_{k+1}) + \langle \nabla f(q_{k+1}),y_{k}- q_{k+1} \rangle \Big) \\
        \geq &\frac{1}{\alpha} \left(  f(y_{k+1}) - \frac{L_f}{2}\norm{y_{k+1}-q_{k+1}}^2 \right) +        \left(1-\frac{1}{\alpha}\right)f(y_k).  \label{relation2}
\end{align}
From relations~\eqref{eq:PGD2_q} and~\eqref{eq:PGD2_y}, we have $y_{k+1}-q_{k+1} = \alpha(x_{k+1}-x_{k})$. Combining this expression with~\eqref{eq:after_3point} and~\eqref{relation2}, we have for all $y \in\mathbb{R}^n$
    \begin{align}
     & \phi(y) +  \left(\frac{1 }{\alpha}-1\right) f(y_k) + \frac{1}{2\tau} \norm{y-x_k}^2   f(q_{k+1})+  \langle \nabla f(q_{k+1}),y-q_{k+1} \rangle \\
     \geq &\phi(x_{k+1})  +   \frac{1}{\alpha} f(y_{k+1})  +   \left( \frac{1}{2\tau} -  \frac{M}{ 2 }\right)   \norm{x_{k+1}-y}^2  +\left(\frac{1}{2\tau}  -  \frac{\alpha L_f}{2}\right) \norm{x_{k+1}-x_k}^2 .\nonumber
     \end{align}
Using the convexity of $f$ we get for all $y \in\mathbb{R}^n$,
\begin{align}\label{eq:relation1}
     & \phi(y)  +     f(y)  +  \left(\frac{1}{\alpha}-1   \right)    f(y_k) + \frac{1}{2\tau} \norm{y-x_k}^2 
      \\
     \geq &\phi(x_{k+1})    +    \frac{1 }{\alpha}  f(y_{k+1})    +   \left(  \frac{1}{2\tau}  -  \frac{\alpha  L_f}{2} \right)  \norm{x_{k+1} - x_k}^2  +  \left(\frac{1}{2\tau}- \frac{M}{ 2 }\right) \norm{x_{k+1}-y}^2, 
   \nonumber
    \end{align}
while weak convexity of $\phi$ with relation~\eqref{eq:PGD2_y} gives 
\begin{align}
\label{eq:phiconvex}
\phi(x_{k+1}) \geq &\,\frac{1}{\alpha} \phi(y_{k+1}) + \left(1-\frac{1}{\alpha}\right) \phi(y_k) - \frac{M}{2}(1-\alpha) \norm{y_k - x_{k+1}}^2.
\end{align}
Using~\eqref{eq:relation1},~\eqref{eq:phiconvex}  and $F =  f + \phi$, we obtain $\forall y \in\mathbb{R}^n$
\begin{align}\label{eq:descent_semiconvex}
    & \left(\frac{1}{\alpha}-1\right) (F(y_k) - F(y)) + \frac{1}{2\tau} \norm{y-x_k}^2 \geq
     \frac{1}{\alpha} (F(y_{k+1})   -  F(y))   \\ &+   \left(\frac{1}{2\tau} - \frac{\alpha  L_f}{2}\right)   \norm{x_{k+1} -x_k}^2 +    \left(  \frac{1}{2\tau} -   \frac{M}{ 2 } \right)    \norm{x_{k+1} -y}^2 
       -  \frac{M}{2} (1 -\alpha)   \norm{y_k  - x_{k+1}}^2.
\nonumber
\end{align}
For $y=y_k$, we get 
\begin{equation}
\begin{split}
 \frac{1}{\alpha} (F(y_{k}) - F(y_{k+1})) \geq &
\left(\frac{1}{2\tau} - \frac{\alpha L_f}{2}\right) \norm{x_{k+1}-x_k}^2 \\ &- \frac{1}{2\tau} \norm{y_k-x_k}^2  \left(\frac{1}{2\tau} -  \frac{M(2-\alpha)}{ 2 }\right) \norm{x_{k+1}-y_k}^2.
\end{split}
\end{equation}
For $\alpha \in (0,1)$, using that 
\begin{align}
    & y_k - x_{k+1} = \frac{1}{\alpha}(y_k - y_{k+1}) \\
    & y_{k} - x_{k} = \left(1-\frac{1}{\alpha}\right)(y_k - y_{k-1})
\end{align}
we get 
\begin{equation}
\label{eq:before_cases}
  \begin{split}
   F(y_{k}) - F(y_{k+1}) \geq& - \frac{\alpha}{2\tau}\left(1-\frac1\alpha\right)^2 \norm{y_k-y_{k-1}}^2 \\ & 
     + \alpha \left(\frac{1}{2\tau} - \frac{\alpha  L_f}{2}\right) \norm{x_{k+1}-x_k}^2   \\ &+ \frac1\alpha \left(\frac{1}{2\tau}- \frac{M(2-\alpha)}{ 2 }\right) \norm{y_{k+1}-y_k}^2.
     \end{split}
\end{equation}
With the assumption $\tau \alpha  L_f < 1$, the second term of the right-hand side is non-negative and therefore,
\begin{equation} \label{eq:descent_aPGD}
  \begin{split}
  F(y_{k}) - F(y_{k+1}) \geq &- \frac{\alpha}{2\tau}\left(1-\frac1\alpha\right)^2 \norm{y_k-y_{k-1}}^2 \\ &+ \frac1\alpha \left(\frac{1}{2\tau}- \frac{M(2-\alpha)}{ 2 } \right) \norm{y_{k+1}-y_k}^2. \\
     = &-\delta \norm{y_k - y_{k-1}}^2 + \delta \norm{y_{k+1} - y_{k}}^2 + (\gamma-\delta) \norm{y_{k+1} - y_{k}}^2
    \end{split}
    \end{equation}
with 
\begin{align}
    \delta = \frac{\alpha}{2\tau}\left(1-\frac1\alpha\right)^2 \text{ and }
    \gamma =  \frac1\alpha \left(\frac{1}{2\tau}- \frac{M(2-\alpha)}{ 2 }\right).
\end{align}

We now make use of the following lemma. 
\begin{lemma}[\cite{bauschke2011convex}]
\label{lem:an_bn}
Let $(a_n)_{n \in \mathbb{N}}$ and $(b_n)_{n \in \mathbb{N}}$ be two real sequences such that $b_n \geq 0$ $\forall n \in \mathbb{N}$, $(a_n)$ is bounded from below and $a_{n+1}+b_n \leq a_n$ $\forall n \in \mathbb{N}$. Then $(a_n)_{n \in \mathbb{N}}$ is a monotonically non-increasing and convergent sequence and $\sum_{n \in \mathbb{N}} b_n < +\infty$.
\end{lemma}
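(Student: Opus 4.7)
The plan is to prove both claims by a direct monotonicity-plus-telescoping argument, which is the standard way to derive this kind of summability lemma from a one-step descent inequality.

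First, I would observe that since $b_n \geq 0$ for every $n$, the hypothesis $a_{n+1} + b_n \leq a_n$ immediately forces $a_{n+1} \leq a_n$, so $(a_n)$ is monotonically non-increasing. Combined with the assumption that $(a_n)$ is bounded from below, the monotone convergence theorem for real sequences yields convergence to a finite limit $a_\infty := \lim_{n \to \infty} a_n \in \mathbb{R}$.

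Second, to deduce summability of $(b_n)$, I would rearrange the inequality as $b_n \leq a_n - a_{n+1}$ and telescope: for every $N \in \mathbb{N}$,
\begin{equation*}
0 \leq \sum_{n=0}^{N} b_n \leq \sum_{n=0}^{N} (a_n - a_{n+1}) = a_0 - a_{N+1}.
\end{equation*}
The partial sums $S_N := \sum_{n=0}^{N} b_n$ are non-negative and non-decreasing (because $b_n \geq 0$), and are uniformly bounded above by $a_0 - a_{N+1} \to a_0 - a_\infty$ as $N \to \infty$. Hence $(S_N)$ is a bounded monotone sequence, so it converges, and passing to the limit gives $\sum_{n \in \mathbb{N}} b_n \leq a_0 - a_\infty < +\infty$.

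There is essentially no hard step: the only subtlety worth flagging is that $b_n \geq 0$ plays a double role — it delivers the monotonicity of $(a_n)$ needed to invoke the monotone convergence theorem, and it simultaneously ensures that the partial sums of $(b_n)$ are themselves monotone, so that the telescoped upper bound yields genuine convergence of the series rather than mere boundedness of a possibly oscillating sum.
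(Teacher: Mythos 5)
Your proof is correct and complete: monotonicity of $(a_n)$ follows from $b_n \ge 0$, convergence from boundedness below, and summability of $(b_n)$ from telescoping. The paper itself states this lemma as a cited result from the literature without reproducing a proof, and your argument is exactly the standard one that the reference relies on, so there is nothing to compare beyond noting full agreement.
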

To apply Lemma~\ref{lem:an_bn} we look for a stepsize satisfying 
\begin{equation}
    \gamma - \delta > 0 \quad \text{i.e.} \quad \tau < \frac{\alpha}{M}.
\end{equation}
Therefore, hypothesis $\tau < \min \left(\frac{1}{\alpha  L_f},\frac{\alpha}{M}\right)$ gives that $(F(y_k) + \delta \norm{y_k - y_{k-1}}^2)$  is a non-increasing and convergent sequence and that $\sum_k \norm{y_k - y_{k+1}}^2 < + \infty$.

\noindent
{\bf (iii)} The  proof of this result is an extension of the proof of Theorem~\ref{thm:PGD}(ii) in the context of the classical PGD. 
Suppose that a subsequence $(y_{k_i})$ is converging towards $y$. Let us show that $y$ is a critical point of $F$.
From~\eqref{eq:PGD2_x}, we have 
\begin{equation}
        \frac{x_{k+1}-x_k}{\tau} - \nabla f(q_{k+1}) \in \partial \phi (x_{k+1}).
\end{equation}
First we show that $x_{k_i+1}-x_{k_i} \to 0$. We have $\forall k > 1 $,
\begin{equation}
\begin{split}
\norm{x_{k+1}-x_k} &= \norm{ \frac{1}{\alpha} y_{k+1} + (1-\frac{1}{\alpha})y_{k} - \frac{1}{\alpha}y_{k} - (1-\frac{1}{\alpha})y_{k-1} } \\
&\leq  \frac{1}{\alpha}\norm{y_{k+1}-y_k} +  (\frac{1}{\alpha}-1)\norm{y_{k}-y_{k-1}} \\
&\to 0.
\end{split}
\end{equation}
From $\eqref{eq:PGD2_q}$, we also get $\norm{q_{k+1}-q_k} \to 0$. Now, let us show that $x_{k_i} \to y$ and $q_{k_i} \to y$. First using $\eqref{eq:PGD2_y}$, we have
\begin{equation}
\begin{split}
\norm{x_{k_i}-y} &\leq \norm{x_{k_i+1}-y}  + \norm{x_{k_i+1}-x_{k_i}} \\
&\leq \frac{1}{\alpha}\norm{y_{k_i+1}-y} + (\frac{1}{\alpha}-1)\norm{y_{k_i}-y}  + \norm{x_{k_i+1}-x_{k_i}} \\
&\to 0.
\end{split}
\end{equation}
Second, from $\eqref{eq:PGD2_q}$, we get in the same way ${q_{k_i} \to y}$. From the continuity of $\nabla f$, we obtain ${\nabla f (q_{k_i}) \to \nabla f(y)}$ and therefore 
\begin{equation}
     \frac{x_{k_i}-x_{k_i-1}}{\tau} -  \nabla f(q_{k_i}) \to -  \nabla f (y).
\end{equation}
If we can also show that $\phi(x_{k_i}) \rightarrow \phi(y)$, we get from the closeness of the limiting subdifferential (Proposition~\ref{prop:subdif_closed}) that $ -  \nabla f (y) \in \partial \phi(y)$ \emph{i.e.} $y$ is a critical point of $F$. 

Using the fact that $\phi$ is lsc and $x_{k_i} \to y$, we have 
\begin{equation}
    \liminf_{i \to \infty} \phi(x_{k_i})  \geq \phi(y).
\end{equation}
On the other hand, with Equation \eqref{eq:relation1} for $k+1 = k_i$, taking $i \to + \infty$, $\norm{y-x_{k_i+1}} \to 0$,  $\norm{y-x_{k_i}} \to 0$, $f(y_{k_i}) \to f(y)$,  $f(y_{k_i+1}) \to f(y)$ and we get 
\begin{equation}
\limsup_{i \to \infty} \phi(x_{k_i}) \leq \phi(y),
\end{equation}
and therefore
\begin{equation}
\lim_{i \to \infty} \phi(x_{k_i}) = \phi(y).
\end{equation}
As $f$ is lower-bounded, if $\phi$ is coercive, so is $F$ and by (i) the iterates $(y_k)$ remain bounded and $(y_k)$ admits a converging subsequence. 

\paragraph{Remark on the convergence of the iterates with the KL hypothesis}
\label{app:precision_KL}

In order to prove a result similar to Theorem~\ref{thm:PGD} (iv) on the convergence of the iterates with the KL hypothesis, we cannot directly apply Theorem 2.9 from~\cite{attouch2013convergence} on $F$ as the objective function $F(x_k)$ by itself does not decrease along the sequence but $F(x_k) + \delta \norm{x_{k+1}-x_k}^2$ does (where $\delta = \frac{\alpha}{2\tau}\left(1-\frac1\alpha\right)^2$). 

Our situation is more similar to the variant of this result presented in~\cite[Theorem 3.7]{ochs2014ipiano}. 
Indeed, denoting $\mathcal{F} : \mathbb{R}^n \times \mathbb{R}^n \to  \mathbb{R}$ defined as $\mathcal{F}(x,y) = F(x) + \delta \norm{x-y}^2$ and considering $\forall k \geq 1$, the sequence $z_k = (y_k,y_{k-1})$ with $y_k$ following our algorithm, we can easily show that $z_k$ verifies the conditions H1 and H3 specified in~\cite[Section 3.2]{ochs2014ipiano}. However, condition H2 does not extend to our algorithm. 
\end{proof}

\subsection{ProxPnP-$\alpha$PGD algorithm}

The $\alpha$PGD algorithm~\eqref{eq:alphaPGD} gives birth to the PnP-$\alpha$PGD algorithm by replacing the proximity operator $\prox_{\tau g}$ by a Gaussian denoiser $D_\sigma$. Now, similar to what was done in Section~\ref{sec:pgd_prox} with the PGD algorithm, in this section, we study the convergence of this PnP-$\alpha$PGD with our particular Proximal Gradient-Step Denoiser \eqref{eq:proximal_denoiser} ${D_\sigma = \prox_{\phi_\sigma}}$ and stepsize $\tau=1$. 
This corresponds to the following algorithm, which we refer to ProxPnP-$\alpha$PGD.

\begin{subequations} \label{eq:ProxPnP-alphaPGD} 
\begin{empheq}[left=\empheqlbrace]{align}
q_{k+1} &= (1-\alpha) y_k + \alpha x_k   \label{eq:PnP-PGD2_q} \\
x_{k+1} &= D_\sigma(x_k  - \lambda\nabla f(q_{k+1}))  \label{eq:PnP-PGD2_x}\\
y_{k+1} &= (1-\alpha) y_k + \alpha  x_{k+1}  \label{eq:PnP-PGD2_y}
\end{empheq}
\end{subequations} 

Like ProxPnP-PGD \eqref{eq:ProxPnP-PGD}, the ProxPnP-$\alpha$PGD scheme targets the critical points of the explicit functional $F = \lambda f + \phi_\sigma$ where $\phi_\sigma$ is obtained from the deep potential $g_\sigma$ via Proposition~\ref{prop:proxdenoiser}. Applying the previous Theorem~\ref{thm:alphaPGD} on the convergence of the $\alpha$PGD algorithm with $\tau=1$ and $g = \phi_\sigma$ which is $M = \frac{L_{g_\sigma}}{L_{g_\sigma}+1}$ weakly convex, we get the following convergence result for ProxPnP-$\alpha$PGD.

\begin{corollary}[Convergence of ProxPnP-$\alpha$PGD~\eqref{eq:ProxPnP-alphaPGD}]
    \label{cor:ProxPnP-alphaPGD} 
    Let $f : \mathbb{R}^n \to \mathbb{R} \cup \{+\infty\}$ convex and differentiable with $L_f$-Lipschitz gradient, bounded from below. Assume that $L_{g_\sigma}$ the Lipschitz constant of $\nabla g_\sigma$ verifies $L_{g_\sigma}<1$. Let $M = \frac{L_{g_\sigma}}{L_{g_\sigma}+1}$ be the weak convexity constant of $\phi_\sigma$ obtained from $g_\sigma$ via Proposition~\ref{prop:proxdenoiser}. Then, for any $\alpha \in (0,1)$ such that 
    \begin{equation}
    \label{eq:PnP-PGD2-conditions}\small 
    \lambda L_f M  < 1  \text{  and  }  M<\alpha < 1/(\lambda L_f)   
    \end{equation}
    the iterates given by the iterative scheme~\eqref{eq:ProxPnP-alphaPGD} verify
    \begin{itemize}
        \item[(i)] $F(y_k) + \frac{\alpha}{2}\left(1-\frac1\alpha\right)^2 \norm{y_k - y_{k-1}}^2$ is non-increasing and converges.
        \item[(ii)] The sequence $y_{k}$ as finite length \emph{i.e.} $\sum_{k=0}^{+\infty} \norm{y_{k+1}-y_k}^2 < +\infty$ and $\norm{y_{k+1}-y_k}$ converges to $0$ at rate $\min_{k < K} \norm{y_{k+1}-y_k} = \mathcal{O}(1/\sqrt{K})$. 
        \item[(iii)] If $g_\sigma$ is coercive, then $(y_k)$ has a subsequence converging towards a critical point of~$F$. 
    \end{itemize}
\end{corollary}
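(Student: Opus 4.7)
The plan is to obtain this corollary as a direct specialization of Theorem~\ref{thm:alphaPGD}, choosing the smooth convex function to be $\lambda f$ and the weakly convex function to be the potential $\phi_\sigma$ associated to the proximal denoiser through Proposition~\ref{prop:proxdenoiser}. Since \eqref{eq:proximal_denoiser} gives $D_\sigma = \prox_{\phi_\sigma}$, the scheme \eqref{eq:ProxPnP-alphaPGD} is literally the iteration \eqref{eq:alphaPGD} applied to the pair $(\lambda f, \phi_\sigma)$ with stepsize $\tau = 1$. The smoothness modulus of $\lambda f$ is $\lambda L_f$, and by Proposition~\ref{prop:proxdenoiser}(i), $\phi_\sigma$ is $M$-weakly convex with $M = L_{g_\sigma}/(L_{g_\sigma}+1)$; it is also bounded from below thanks to Proposition~\ref{prop:proxdenoiser}(ii) and the nonnegativity of $g_\sigma$.

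Next, I would translate the stepsize requirement $\tau < \min(1/(\alpha L_f), \alpha/M)$ of Theorem~\ref{thm:alphaPGD}, with $L_f$ there replaced by $\lambda L_f$, into conditions at $\tau = 1$. The inequality $1 < 1/(\alpha \lambda L_f)$ yields $\alpha < 1/(\lambda L_f)$, and $1 < \alpha/M$ yields $M < \alpha$. The existence of $\alpha \in (0,1)$ satisfying both is equivalent to $M < 1/(\lambda L_f)$, i.e., $\lambda L_f M < 1$; this is precisely the sandwich condition \eqref{eq:PnP-PGD2-conditions}. Items (i) and (ii) of the corollary are then immediate restatements of Theorem~\ref{thm:alphaPGD}(i)--(ii) with $\tau = 1$.

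For item (iii), the goal is to secure boundedness of $(y_k)$ so as to extract a cluster point, at which Theorem~\ref{thm:alphaPGD}(iii) will deliver stationarity. If $g_\sigma$ is coercive, Proposition~\ref{prop:proxdenoiser}(ii) gives $\phi_\sigma(x) \geq g_\sigma(x) + K$, so $\phi_\sigma$ is coercive; combined with $f$ being lower bounded, $F = \lambda f + \phi_\sigma$ is coercive. Item (i) then guarantees that $F(y_k) + \tfrac{\alpha}{2}(1-1/\alpha)^2 \|y_k - y_{k-1}\|^2$ is non-increasing, which in particular bounds $F(y_k)$ from above and hence keeps $(y_k)$ in a sublevel set of $F$. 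Boundedness yields a convergent subsequence, and Theorem~\ref{thm:alphaPGD}(iii) identifies its limit as a critical point of $F$.

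The only genuinely subtle point is bookkeeping on Lipschitz constants: one must rescale by $\lambda$ when invoking Theorem~\ref{thm:alphaPGD} with $\lambda f$ in place of $f$, so that the product $\lambda L_f M < 1$ emerges as the existence condition for an admissible $\alpha$. Beyond this, everything is a mechanical specialization, and no new analytical ingredient is required.
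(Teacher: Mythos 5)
Your proposal is correct and follows exactly the paper's route: the corollary is obtained as a direct specialization of Theorem~\ref{thm:alphaPGD} with the smooth convex function $\lambda f$, the $M$-weakly convex potential $\phi_\sigma$ from Proposition~\ref{prop:proxdenoiser}, and stepsize $\tau=1$, the stepsize condition translating into the sandwich $M<\alpha<1/(\lambda L_f)$ whose solvability is $\lambda L_f M<1$. The coercivity argument for (iii) via $\phi_\sigma\geq g_\sigma+K$ is likewise the one the paper relies on.
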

The existence of $\alpha\in(0,1)$ satisfying relation~\eqref{eq:PnP-PGD2-conditions} is ensured as soon as ${\lambda L_f M < 1}$. As a consequence, when $M$ gets small (\emph{i.e} $\phi_\sigma$ gets "more convex") $\lambda L_f$ can get arbitrarily large. This is a major advance compared to ProxPnP-PGD, which was limited to $\lambda L_f < 2$ in Corollary~\ref{cor:PnP-PGD}, even for convex $\phi$ ($M=0$).
To further exploit this property, we now consider the relaxed denoiser $D^\gamma_\sigma$~\eqref{eq:relaxed_proximal_denoiser} that is associated to a function $\phi_\sigma^\gamma$ with a tunable weak convexity constant $M^\gamma$.

\begin{corollary}[Convergence of ProxPnP-$\alpha$PGD with relaxed denoiser]\label{thm:PnP-PGD3}
Let ${F := \reglambda  f + \phi^\gamma_\sigma}$, with the $M^\gamma=\frac{\gamma L_{g_\sigma}}{\gamma L_{g_\sigma}+1}$-weakly convex potential $\phi^\gamma_\sigma$ introduced in~\eqref{eq:relaxed_proximal_denoiser_phi} and $L_{g_\sigma}<1$. Then, for $M^\gamma <\alpha < 1/(\lambda L_f)$, the iterates $x_k$ given by the ProxPnP-$\alpha$PGD~\eqref{eq:ProxPnP-alphaPGD} with $\gamma$-relaxed denoiser $D^\gamma_\sigma$ defined in~\eqref{eq:relaxed_proximal_denoiser} verify the convergence properties (i)-(iii) of Theorem~\ref{thm:alphaPGD}.
\end{corollary}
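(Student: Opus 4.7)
The plan is to observe that this corollary is structurally identical to Corollary~\ref{cor:ProxPnP-alphaPGD}, only with the unrelaxed proximal denoiser $D_\sigma = \prox_{\phi_\sigma}$ replaced by the relaxed denoiser $D^\gamma_\sigma$, and to reduce everything to a direct application of Theorem~\ref{thm:alphaPGD}. The key fact that makes this reduction possible is the identification \eqref{eq:relaxed_proximal_denoiser_phi}: since $g^\gamma_\sigma := \gamma g_\sigma$ has $\gamma L_{g_\sigma}$-Lipschitz gradient with $\gamma L_{g_\sigma} \leq L_{g_\sigma} < 1$, Proposition~\ref{prop:proxdenoiser} applied to $g^\gamma_\sigma$ yields a proper lsc potential $\phi^\gamma_\sigma$ which is $M^\gamma$-weakly convex with $M^\gamma = \frac{\gamma L_{g_\sigma}}{\gamma L_{g_\sigma}+1}$, and such that $D^\gamma_\sigma = \prox_{\phi^\gamma_\sigma}$.

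Next, I would recognize that ProxPnP-$\alpha$PGD with relaxed denoiser is exactly the $\alpha$PGD scheme \eqref{eq:alphaPGD} with stepsize $\tau = 1$ applied to the pair of functions $(\lambda f, \phi^\gamma_\sigma)$: the update $x_{k+1} = D^\gamma_\sigma(x_k - \lambda \nabla f(q_{k+1})) = \prox_{\phi^\gamma_\sigma}(x_k - \lambda \nabla f(q_{k+1}))$ matches \eqref{eq:PGD2_x} verbatim, and the $q$ and $y$ updates coincide. Here $\lambda f$ is convex and $\lambda L_f$-smooth, and $\phi^\gamma_\sigma$ is proper, lsc, lower-bounded (it inherits the bound from $g^\gamma_\sigma \geq 0$ via Proposition~\ref{prop:proxdenoiser}(ii)) and $M^\gamma$-weakly convex, so all hypotheses of Theorem~\ref{thm:alphaPGD} are met at the level of the functions.

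It then remains to check the stepsize inequality of Theorem~\ref{thm:alphaPGD}, which with $\tau = 1$ and effective smoothness constant $\lambda L_f$ reads
\begin{equation}
1 < \min\!\left(\frac{1}{\alpha \lambda L_f},\ \frac{\alpha}{M^\gamma}\right),
\end{equation}
i.e.\ $\alpha \lambda L_f < 1$ and $M^\gamma < \alpha$. These two conditions are precisely the hypothesis $M^\gamma < \alpha < 1/(\lambda L_f)$ of the corollary. Applying Theorem~\ref{thm:alphaPGD} then delivers items (i)--(iii) of the conclusion word for word, with $F = \lambda f + \phi^\gamma_\sigma$ and $\delta = \frac{\alpha}{2}(1 - 1/\alpha)^2$.

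There is no real obstacle here: the entire argument is a bookkeeping exercise that re-reads Corollary~\ref{cor:ProxPnP-alphaPGD} with $\gamma$-decorated objects. The only mild subtlety is that Theorem~\ref{thm:alphaPGD}(iii) produces cluster points of $(y_k)$ which are stationary points of $F$ on $\Im(D^\gamma_\sigma)$, and one should note, as in the end of the proof of Corollary~\ref{cor:PnP-PGD}, that such limit points are necessarily fixed points of the PnP operator and therefore lie in $\Im(D^\gamma_\sigma)$, so that the formula \eqref{eq:phi} for $\phi^\gamma_\sigma$ (and its regularity properties) applies at these points.
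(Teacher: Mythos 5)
Your proposal is correct and is essentially the argument the paper intends: the corollary is a direct instantiation of Theorem~\ref{thm:alphaPGD} with $\tau=1$, smooth part $\lambda f$ (hence smoothness constant $\lambda L_f$) and weakly convex part $\phi^\gamma_\sigma$ with constant $M^\gamma$ supplied by Proposition~\ref{prop:proxdenoiser} applied to $\gamma g_\sigma$, the stepsize condition reducing exactly to $M^\gamma<\alpha<1/(\lambda L_f)$. Your closing remark about limit points lying in $\Im(D^\gamma_\sigma)$ is a sensible extra precaution consistent with the paper's treatment of Corollary~\ref{cor:PnP-PGD}.
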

Therefore, using the $\gamma$-relaxed denoiser $D^\gamma_\sigma = \gamma D_\sigma + (1-\gamma)\id$, the overall convergence condition on $\lambda$ is now 
\begin{equation}
    \lambda < \frac{1}{L_f} \left( 1 + \frac{1}{\gamma L} \right) .
\end{equation}
Provided $\gamma$ gets small, $\lambda$ can be arbitrarily large.  Small $\gamma$ means small amount of regularization brought by denoising at each step of the PnP algorithm. Moreover, for small $\gamma$, the targeted regularization function $\phi_\sigma^\gamma$ gets close to a convex function. It has already been observed that deep convex regularization can be suboptimal compared to more flexible nonconvex ones \citep{cohen2021has}. Depending on the inverse problem, and on the necessary amount of regularization,  the choice of the couple $(\gamma,\lambda)$ will be of paramount importance for efficient restoration.  

\section{PnP Douglas-Rachford Splitting (PnP-DRS)} \label{sec:DRS}

We now focus on the ProxPnP version of the Douglas-Rachford Splitting (DRS). It corresponds to a classical DRS algorithm, with fixed stepsize $\tau = 1$, for optimizing the function $\lambda f + \phi_\sigma$. The algorithm writes, for $\beta \in (0,1]$,
\begin{subequations}\label{eq:ProxPnP-DRS2}
\begin{empheq}[left=\empheqlbrace]{align}
        y_{k+1} &= \prox_{\phi_\sigma}(x_k) \label{eq:DRS2_y} \\ 
        z_{k+1} &= \prox_{\lambda f}(2y_{k+1}-x_{k}) \label{eq:DRS2_z}
        \\ 
        x_{k+1} &=  x_{k} + 2\beta (z_{k+1}-y_{k+1}) \label{eq:DRS2_x}
\end{empheq}
\end{subequations}
or equivalently
\begin{equation}
    x_{k+1} \in \left( \beta \rprox_{\lambda f} \circ \rprox_{ \phi_\sigma} \circ + (1-\beta)\id \right) (x_k) .
\end{equation}
The case $\beta=\frac12$ corresponds to the standard version of Douglas-Rachford Splitting, and is equivalent to ADMM, while $\beta = 1$ is generally referred to as Peaceman-Rachford splitting. 

The convergence of this algorithm in the nonconvex setting, studied in \cite{li2016douglas} and~\cite{themelis2020douglas}, requires one of the two functions to be differentiable with globally Lipschitz gradient. Assuming that the data-fidelity term $f$ satisfies this property, we get convergence of the DRS algorithm provided that $\lambda L_f < 1$ \citep[Theorem 4.3]{hurault2022proximal}. In the experiments, this version of DRS will be referred to \emph{DRSdiff}. 

Convergence of this algorithm without the restriction on the parameter $\lambda$ is possible if $\phi_\sigma$ has a globally Lipschitz gradient. However, $\phi_\sigma$ obtained from Proposition~\ref{prop:proxdenoiser} is differentiable only on $\Im(D_\sigma) = \Im(\prox_{\phi_\sigma})$. In \cite{hurault2022proximal}, convergence of the iterates was proved assuming $\Im(D_\sigma)$ to be convex in order to keep the descent lemma for $\phi_\sigma$ valid on $\Im(D_\sigma)$ (Proposition~\ref{prop:descent_lemma}). However, the convexity of $\Im(D_\sigma)$ is difficult to verify and unlikely to be true in practice. 

Instead, we propose a new adaptation of the original convergence results from~\cite{themelis2020douglas,li2016douglas} on the convergence of the DRS algorithm with nonconvex functions. We directly write the convergence result in the ProxPnP case of interest. In particular, we consider a very general data-fidelity term $f$ \textbf{nonconvex and non-differentiable} and $\phi_\sigma$ obtained from Proposition~\ref{prop:proxdenoiser} via the proximal Gradient step denoiser ${D_\sigma = \id - \nabla g_\sigma = \prox_{\phi_\sigma}}$. As detailed in Section~\ref{sec:more_details_reg_proxpnp}, $\phi_\sigma$ is $\frac{L_{g_\sigma}}{1+L_{g_\sigma}}$-weakly convex on $\R^n$ and has $\frac{L_{g_\sigma}}{1-L_{g_\sigma}}$ Lipschitz gradient on $\Im(D_\sigma)$. Our convergence result makes use of these two properties.
The main adaptation concerns the proof of the sufficient decrease of the Douglas-Rachford envelope
\begin{equation}\label{eq:DRE}
\begin{split}
    F^{DR}(x,y,z) &= \reglambda  f(z) + \phi_\sigma(y) + \frac{1}{\tau}\langle y-x, y-z \rangle + \frac{1}{2}\norm{y-z}^2.
\end{split}
\end{equation}

\begin{theorem}[Convergence of ProxPnP-DRS] \label{thm:ProxPnP-DRS}
 Let $f$ proper, lsc, bounded from below. Assume that $L_{g_\sigma}$ the Lipschitz constant of $\nabla g_\sigma$ and $\beta \in (0,1)$ verify
\begin{equation} \label{eq:L_beta_DRS2}
    \beta(2L_{g_\sigma}^3 - 3L_{g_\sigma}^2 + 1) + (2L_{g_\sigma}^2 + L_{g_\sigma} - 1) < 0 .
\end{equation}
Then, for all $\reglambda > 0$, the iterates $(y_k,z_k,x_k)$
given by the iterative scheme~\eqref{eq:ProxPnP-DRS2} verify
\begin{itemize}
    \item[(i)] $(F^{DR}(x_{k-1},y_k,z_k))$ is non-increasing and converges.
    \item[(ii)] $x_k - x_{k-1} = \beta (y_k - z_k)$ vanishes with rate $\min_{k \leq K} \norm{y_k - z_k} = \mathcal{O}(\frac{1}{\sqrt{K}})$.
     \item[(iii)] If $f$ is real analytic, and $g_\sigma$ is coercive, then the iterates $(x_k)$ converge towards a critical point of $F$. 
\end{itemize}
\end{theorem}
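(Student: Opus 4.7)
The plan is to adapt the Douglas-Rachford-envelope analysis of \cite{themelis2020douglas} to the present setting by exploiting a role-reversal: here $\phi_\sigma$ plays the role of the ``smooth'' term (it is $\mathcal{C}^1$ with Lipschitz gradient on $\Im(D_\sigma)$ by Proposition~\ref{prop:proxdenoiser}(iii), and $M$-weakly convex on $\R^n$), while $f$ is only required to be proper, lsc and lower-bounded. This is what eliminates any constraint on $\lambda$: the smoothness/Lipschitz constant that enters the descent inequality is that of $\phi_\sigma$, not of $\lambda f$. The envelope $F^{DR}$ defined in \eqref{eq:DRE} will serve as the Lyapunov function.

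For (i), I would prove a sufficient decrease inequality
\begin{equation*}
    F^{DR}(x_{k-1},y_k,z_k) - F^{DR}(x_k,y_{k+1},z_{k+1}) \geq C(\beta,L_{g_\sigma}) \norm{y_{k+1}-z_{k+1}}^2 ,
\end{equation*}
where $C(\beta,L_{g_\sigma})>0$ exactly under assumption~\eqref{eq:L_beta_DRS2}. The building blocks are: (a) the three-point inequality of Proposition~\ref{prop:weaklyconvex}(iii) applied to $y_{k+1}=\prox_{\phi_\sigma}(x_k)$ with weak-convexity constant $M=L_{g_\sigma}/(L_{g_\sigma}+1)$; (b) the descent lemma (Proposition~\ref{prop:descent_lemma}) applied to $\phi_\sigma$ on $\Im(D_\sigma)$ with Lipschitz constant $L_{g_\sigma}/(1-L_{g_\sigma})$, which is legitimate because all iterates $y_k$ lie in $\Im(D_\sigma)$ and this set is open; (c) the variational definition of $z_{k+1}=\prox_{\lambda f}(2y_{k+1}-x_k)$, used only through its minimizer property so that no regularity of $f$ is needed; and (d) the update rule $x_k - x_{k-1} = 2\beta(z_k - y_k)$ to rewrite the cross term $\langle y-x, y-z\rangle$ in \eqref{eq:DRE}. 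Expanding and collecting, the worst-case coefficient of $\norm{y_{k+1}-z_{k+1}}^2$ should simplify, after clearing the common denominator $(1-L_{g_\sigma})(1+L_{g_\sigma})$, to the cubic-quadratic polynomial appearing in~\eqref{eq:L_beta_DRS2}. Combined with the fact that $F^{DR}$ is bounded below (since $f$ and $\phi_\sigma$ are, and the cross plus quadratic terms in $y-z$ combine into $\tfrac{1}{2}\norm{y-z}^2 + \langle y-x,y-z\rangle \geq -\tfrac{1}{2}\norm{y-x}^2$ via Young's inequality), this yields monotone convergence of $F^{DR}(x_{k-1},y_k,z_k)$.

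Statement (ii) then follows by telescoping the sufficient decrease: $\sum_k \norm{y_{k+1}-z_{k+1}}^2<+\infty$, and since $x_{k}-x_{k-1}=2\beta(z_k-y_k)$, the best-iterate bound $\min_{k\leq K}\norm{y_k-z_k}=\mathcal{O}(1/\sqrt{K})$ is immediate. For (iii), I would verify the Attouch--Bolte H1--H3 conditions of~\cite{attouch2013convergence} for the augmented sequence $w_k=(x_{k-1},y_k,z_k)$ and the Lyapunov function $F^{DR}$: H1 is the sufficient decrease; H2 is obtained by combining the three proximal inclusions at step $k{+}1$ to build an explicit element of $\partial F^{DR}(w_{k+1})$ whose norm is controlled by $\norm{w_{k+1}-w_k}$; H3 uses the coercivity of $g_\sigma$, hence of $\phi_\sigma$ by Proposition~\ref{prop:proxdenoiser}(ii), together with the already established descent, to bound $(x_k)$ and then pass to a continuous subsequence by the same arguments used in the proof of Theorem~\ref{thm:PGD}(iii). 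The KL property of $F^{DR}$ at cluster points follows because, by Lemma~\ref{lem:stability-analytic}, $\phi_\sigma$ is real analytic on $\Im(D_\sigma)$ and $f$ is real analytic by assumption, while the cross terms in \eqref{eq:DRE} are polynomial; sums and products of analytic functions are analytic, so $F^{DR}$ is KŁ on the relevant open set. A standard invocation of~\cite[Theorem 2.9]{attouch2013convergence} then gives single-point convergence of $(x_k)$ to a critical point of $F$.

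The main obstacle I expect is step (b) in the decrease calculation: obtaining an explicit, tight coefficient that matches the polynomial~\eqref{eq:L_beta_DRS2}. The difficulty is twofold. First, one must juggle two different Lipschitz-type constants for $\phi_\sigma$ — the weak-convexity constant $L_{g_\sigma}/(1+L_{g_\sigma})$ and the Lipschitz gradient constant $L_{g_\sigma}/(1-L_{g_\sigma})$ — coming from different estimates on the same function, and combine them without losing tightness. Second, the envelope contains a cross term $\langle y-x,y-z\rangle$ whose variation across consecutive iterates must be controlled using the exact $2\beta$ relaxation in the $x$-update; this is where $\beta$ enters \eqref{eq:L_beta_DRS2} linearly. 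Getting the exact cubic expression in $L_{g_\sigma}$ should come from algebraic simplification after multiplying through by $1-L_{g_\sigma}^2$, but identifying the correct grouping of quadratic residual terms is the delicate point.
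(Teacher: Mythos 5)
Your overall architecture is the right one and matches the paper's: treat $\phi_\sigma$ as the ``regular'' term (differentiable with $\frac{L_{g_\sigma}}{1-L_{g_\sigma}}$-Lipschitz gradient on $\Im(D_\sigma)$, $M$-weakly convex on $\R^n$) and $f$ as the merely proper lsc term, use $F^{DR}$ as Lyapunov function, compare the minimum defining $F^{DR}_{k+1}$ with the competitor $z=z_k$, and inject the $2\beta$ update to handle the cross term. However, your building block (b) contains a genuine gap: you propose to apply the descent lemma (Proposition~\ref{prop:descent_lemma}) to $\phi_\sigma$ on $\Im(D_\sigma)$, justified by the fact that the iterates $y_k$ lie in $\Im(D_\sigma)$ and that this set is open. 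Openness is not enough. The descent lemma integrates $\nabla\phi_\sigma$ along the segment joining the two points, so it requires the domain of smoothness to be \emph{convex}; $\Im(D_\sigma)$ is open but has no reason to be convex, and the paper explicitly identifies this as the obstruction that made the earlier result of \cite{hurault2022proximal} rely on an unverifiable hypothesis. The whole point of the new proof is to avoid the descent lemma for $\phi_\sigma$. The correct substitute is to use only two estimates that hold without any convexity of $\Im(D_\sigma)$: the one-sided subgradient inequality of $M$-weak convexity (Proposition~\ref{prop:weaklyconvex}(ii), valid on all of $\R^n$), and the \emph{two-point} Lipschitz bound $\norm{\nabla\phi_\sigma(y_{k+1})-\nabla\phi_\sigma(y_k)}\leq \frac{L_{g_\sigma}}{1-L_{g_\sigma}}\norm{y_{k+1}-y_k}$, which Proposition~\ref{prop:proxdenoiser}(iii) gives for arbitrary pairs of points of $\Im(D_\sigma)$. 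It is precisely the asymmetric combination of these two constants, $M=\frac{L_{g_\sigma}}{1+L_{g_\sigma}}$ from below and $\frac{L_{g_\sigma}}{1-L_{g_\sigma}}$ squared from the Lipschitz term, that produces the cubic condition~\eqref{eq:L_beta_DRS2} after clearing denominators.

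A secondary issue: your lower bound $\frac12\norm{y-z}^2+\langle y-x,y-z\rangle\geq -\frac12\norm{y-x}^2$ does not establish that $F^{DR}(x_{k-1},y_k,z_k)$ is bounded below, since $-\frac12\norm{y_k-x_{k-1}}^2$ is not uniformly bounded a priori. To conclude convergence of the envelope values one must use the structure along the iterates (e.g.\ $y_k-x_{k-1}=-\nabla\phi_\sigma(y_k)$ together with the weak-convexity inequality relating $\phi_\sigma(y_k)+\langle\nabla\phi_\sigma(y_k),z_k-y_k\rangle$ to $\phi_\sigma(z_k)$), not a pointwise Young inequality. The rest of your plan for (ii) and (iii) (telescoping, H1--H3 or the Li--Pong argument, analyticity of $F^{DR}$ on the relevant open set, coercivity via $\phi_\sigma\geq g_\sigma+K$) is sound and consistent with the paper.
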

\begin{remark}
    The standard Douglas-Rachford algorithm is for $\beta = 1/2$. The restriction on $L_{g_\sigma}$ is then
    \begin{equation}
    2L_{g_\sigma}^3+L_{g_\sigma}^2+2L_{g_\sigma}-1 < 0 
    \end{equation} 
    which is verified for approximately ${L_{g_\sigma} < 0.376}$. Using a smaller $\beta$ relaxes the constraint on~$L_{g_\sigma}$~until $L_{g_\sigma}<0.5$ when $\beta \to 0$. For example for $\beta=0.25$, the constraint is verified up to  approximately ${L_{g_\sigma} < 0.45}$.
\end{remark}
\begin{remark}
    In addition to deal with non-differentiable data-fidelity terms, the main advantage of this result, compared to \citep[Theorem 4.3]{hurault2022proximal}, is that the condition on $\lambda$ disappears, and the convergence remains true for all regularization parameter $\lambda$.
    However, the restriction on $L_{g_\sigma}$ the Lipschitz constant of $\nabla g_\sigma$ is stronger than just $L_{g_\sigma}< 1$, which should harm the denoising capacity of the denoiser. 
\end{remark}
\begin{proof}
    \textbf{(i)} As explained above, this point represents the main adaptation. Following the initial proof from~\cite[Theorem 1]{themelis2020douglas}, we derive a new sufficient decrease property for $F^{DR}$. We first rewrite the envelope $F^{DR}$ from~\eqref{eq:DRE} as
\begin{equation}
    \begin{split}
        F^{DR}(x,y,z) &= \lambda f(z) +  \phi_\sigma(y) + \langle y-x, y-z \rangle + \frac{1}{2 } \norm{y-z}^2 \\
        &= \lambda f(z) +  \phi_\sigma(y) + \frac{1}{2}\norm{(2y-x)-z}^2 - \frac{1}{2}\norm{x-y}^2 .
\end{split}
\end{equation}
As $z_{k+1} = \prox_{ f}(2y_{k+1}-x_{k})$ \eqref{eq:DRS2_z}, denoting $\forall k > 0$ $F^{DR}_k = F^{DR}(x_{k-1},y_{k},z_{k})$,
\begin{equation}
    \begin{split}
    F^{DR}_{k+1} &= \min_{z \in \mathbb{R}^n}   \lambda f(z) +  \phi_\sigma(y_{k+1}) + \frac{1}{2}\norm{(2y_{k+1}-x_{k})- z }^2 - \frac{1}{2}\norm{x_{k}-y_{k+1}}^2 \\
    &=  \min_{z \in \mathbb{R}^n} \lambda f(z) +  \phi_\sigma(y_{k+1}) + \langle y_{k+1}-x_{k}, y_{k+1}-z \rangle + \frac{1}{2} \norm{y_{k+1}-z}^2 .
    \end{split}    
\end{equation}
The optimality condition for \eqref{eq:DRS2_y} is
\begin{equation}
    x_{k}-y_{k+1} \in \partial \phi_\sigma(y_{k+1}) .
\end{equation}
As $\phi_\sigma$ is differentiable on $\Im(D_\sigma)$ and ${y_{k+1} \in \Im(D_\sigma)}$, 
\begin{equation}\label{eq:x-y}
    x_{k}-y_{k+1}= \nabla \phi_\sigma(y_{k+1}).
\end{equation}
The Douglas–Rachford envelope $F^{DR}$ then writes
\begin{equation}
\label{eq:themelis_DRE2}
\begin{split}
F^{DR}_{k+1} &=   \min_{z \in \mathbb{R}^n} \lambda f(z) +  \phi_\sigma(y_{k+1}) + \langle \nabla \phi_\sigma(y_{k+1}), z-y_{k+1} \rangle + \frac{1}{2} \norm{y_{k+1}-z}^2 .
\end{split}    
\end{equation}
We get, by comparing with the right term when $z=z_k$, 
\begin{equation}
    \begin{split}
        F^{DR}_{k+1} &\leq  \lambda f(z_k) +  \phi_\sigma(y_{k+1}) + \langle \nabla \phi_\sigma(y_{k+1}), z_k-y_{k+1} \rangle + \frac{1}{2} \norm{y_{k+1}-z_k}^2  \\
    &= \phi_\sigma(y_{k+1}) + \langle  \nabla \phi_\sigma(y_{k+1}), y_k-y_{k+1} \rangle  +  \langle  \nabla \phi_\sigma(y_{k+1}), z_k-y_k \rangle \\ &+ \lambda f(z_k) + \frac{1}{2} \norm{y_{k+1}-z_k}^2 .
\end{split}     
\end{equation}
From Proposition~\ref{prop:proxdenoiser}, $\phi_\sigma$ is $M = \frac{L_{g_\sigma}}{L_{g_\sigma}+1}$ weakly convex.
Then with Proposition~\ref{prop:weaklyconvex} (ii) we have 
\begin{equation}
\begin{split}
        \phi_\sigma(y_{k+1}) &+ \langle  \nabla \phi_\sigma(y_{k+1}), y_k-y_{k+1}\rangle \leq \phi_\sigma(y_k) + \frac{M}{2}\norm{y_k-y_{k+1}}^2 
\end{split}
\end{equation}
and 
\begin{equation}
\begin{split}
    F^{DR}_{k+1} &\leq \phi_\sigma(y_k) + \lambda f(z_k) + \langle  \nabla \phi_\sigma(y_{k+1}), z_k-y_k \rangle \\ &+  \frac{M}{2}\norm{y_k-y_{k+1}}^2  + \frac{1}{2} \norm{y_{k+1}-z_k}^2 .
\end{split}
\end{equation}
Moreover,
\begin{equation}
\begin{split}
    \langle  \nabla & \phi_\sigma(y_{k+1}),  z_k-y_k \rangle =  \langle  \nabla \phi_\sigma(y_{k}), z_k-y_k \rangle +  \langle  \nabla \phi_\sigma(y_{k+1}) - \nabla \phi_\sigma(y_{k}), z_k-y_k \rangle
\end{split}
\end{equation}
and using 
\begin{equation}
\begin{split}
    F^{DR}_{k} &= \lambda f(z_k) +  \phi_\sigma(y_{k}) + \langle y_{k}-x_{k-1}, y_{k}-z_k \rangle + \frac{1}{2} \norm{y_{k}-z_k}^2 \\
    &= \lambda f(z_k) +  \phi_\sigma(y_{k}) + \langle \nabla \phi_\sigma(y_k), z_k-y_k \rangle + \frac{1}{2} \norm{y_{k}-z_k}^2
\end{split}
\end{equation}
we get
\begin{equation}
\begin{split}
    F^{DR}_{k+1} &\leq F^{DR}_{k} + \langle  \nabla \phi_\sigma(y_{k+1}) - \nabla \phi_\sigma(y_{k}), z_k-y_k \rangle \\ &- \frac{1}{2} \norm{y_{k}-z_k}^2 + \frac{1}{2} \norm{y_{k+1}-z_k}^2 + \frac{M}{2}\norm{y_k-y_{k+1}}^2 .
\end{split}
\end{equation}
Now using 
\begin{equation}
\begin{split}
    \frac{1}{2} \norm{y_{k+1}-z_k}^2 &= \frac{1}{2}\norm{ y_k-y_{k+1}}^2 + \frac{1}{2}\norm{y_k-z_k}^2 + \langle y_{k+1}-y_k,y_k-z_k\rangle 
\end{split}
\end{equation}
we get
\begin{equation}
\begin{split}
    F^{DR}_{k+1} &\leq F^{DR}_{k} + \langle  (\nabla \phi_\sigma(y_{k+1}) - \nabla \phi_\sigma(y_{k})) - (y_{k+1}-y_k), z_k-y_k \rangle  \\ &+ \frac{1+M}{2}\norm{y_k-y_{k+1}}^2 .
\end{split}
\end{equation}
From~\eqref{eq:DRS2_x} and~\eqref{eq:x-y} 
\begin{equation}
\begin{split}
    z_k-y_k &= \frac{1}{2\beta}(x_k-x_{k-1})
    = \frac{1}{2\beta}(y_{k+1} - y_{k}) + \frac{1}{2\beta}(\nabla \phi_\sigma(y_{k+1}) -\nabla \phi_\sigma(y_{k})) ,
\end{split}
\end{equation}
so that
\begin{equation}
\begin{split}
    F^{DR}_{k+1} - F^{DR}_{k} &\leq \left( 1 +  M - \frac{1}{\beta}\right)\frac{1}{2}\norm{ y_k-y_{k+1}}^2 + \frac{1}{2\beta} \norm{\nabla \phi_\sigma(y_{k+1}) -\nabla \phi_\sigma(y_{k})}^2 .
\end{split}
\end{equation}
Using the fact that $\nabla \phi_\sigma$ is $\frac{L_{g_\sigma}}{1-L_{g_\sigma}}$ Lipschitz on $\Im(D_\sigma)$ and that $\forall k>0, y_k \in \Im(D_\sigma)$,
it simplifies to 
\begin{equation}
\begin{split}
    & F^{DR}_{k+1} - F^{DR}_{k} \leq \frac{1}{2}\left( 1 + M  + \frac{1}{\beta}\left( \frac{L_{g_\sigma}^2}{(1-L_{g_\sigma})^2}-1\right)\right)\norm{ y_k-y_{k+1}}^2 .
\end{split}
\end{equation} 
Recalling that $M=\frac{L_{g_\sigma}}{1+L_{g_\sigma}}$, the condition on the stepsize for $F^{DR}_{k+1} \leq F^{DR}_{k}$ is thus
\begin{equation}
\begin{split}
  &  1 + M  + \frac{1}{\beta}\left( \frac{L_{g_\sigma}^2}{(1-L_{g_\sigma})^2}-1\right) < 0 \\
  &\Leftrightarrow \frac{2L_{g_\sigma}+1}{L_{g_\sigma}+1}  + \frac{1}{\beta}\frac{2L_{g_\sigma}-1}{(1-L_{g_\sigma})^2} < 0 \\
  &\Leftrightarrow \beta(2L_{g_\sigma}^3 - 3L_{g_\sigma}^2 + 1) + (2L_{g_\sigma}^2 + L_{g_\sigma} - 1) < 0 .
\end{split}
\end{equation}
For completeness, recalling that $D_\sigma = \nabla h_\sigma$ with $h_\sigma$ $1-L_{g_\sigma}$ strongly convex,  we have
\begin{equation}
\begin{split}
    \norm{ y_k-y_{k+1}} &= \norm{D_\sigma(x_k)-D_\sigma(x_{k-1})} 
    \geq (1-L_{g_\sigma})  \norm{ x_k-x_{k-1}}
\end{split}
\end{equation}
and the sufficient decrease writes 
\begin{equation}
   F^{DR}_{k} - F^{DR}_{k+1} \geq \delta \norm{x_k-x_{k-1}}^2
\end{equation} 
with 
\begin{equation}
\delta  = \frac{1}{2} (1-L_{g_\sigma})\left(\frac{1}{\beta}\left( 1 - \frac{L_{g_\sigma}^2}{(1-L_{g_\sigma})^2}\right) - (1+M)\right)
\end{equation} 

\textbf{(ii)} This point directly follows from the above sufficient decrease condition.

\textbf{(iii)} For the original DRS algorithm, this point was proven in \citep[Theorem 2]{li2016douglas}. Given the sufficient decrease property, assuming that (a) $F^{DR}$ verifies the KŁ property on $\Im(D_\sigma)$ and (b) that the iterates are bounded, the proof follows equally. With the same arguments as in the proof of Corollary~\ref{cor:PnP-PGD}, both assumptions (a) and (b) are verified when $f$ is real analytic and $g_\sigma$ is coercive.
\end{proof}

\section{Experiments} \label{sec:expes}
In this section, we apply, with the proximal denoiser Prox-DRUNET, the PnP algorithms ProxPnP-PGD~\eqref{eq:ProxPnP-PGD}, 
ProxPnP-$\alpha$PGD~\eqref{eq:ProxPnP-alphaPGD}, ProxPnP-DRSdiff~\eqref{eq:ProxPnP-DRS} (\emph{diff} specifies that this PnP-DRS is dedicated to differentiable data-fidelity terms $f$) and ProxPnP-DRS~\eqref{eq:ProxPnP-DRS2} for deblurring and super-resolution with Gaussian noise.  These four algorithms target critical points of the objective 
\begin{equation}
    F = \lambda f + \phi_\sigma
\end{equation}
with $f$ the data-fidelity term and $\phi_\sigma$ the regularization function detailed in Section~\ref{sec:more_details_reg_proxpnp}.

For both applications, we consider a degraded observation   $y = A x^* + \nu\in \mathbb{R}^m$ of a clean image $x^*\in\mathbb{R}^n$ that is estimated by solving problem~\eqref{eq:pb} with $f(x)=\frac{1}{2}\norm{Ax-y}^2$. Its gradient  $\nabla f(x) = A^T(Ax-y)$ is thus Lipschitz with constant $\norm{A^T A}_S$.  The blur kernels are normalized so that the Lipschitz constant $L_f=1$. We use for evaluation and comparison the 68 images from the CBSD68 dataset, center-cropped to $n=256\times256$ and Gaussian noise with $3$ noise levels $\nu \in\{0.01,0.03,0.05\}$.

For \emph{deblurring}, the degradation operator $A=H$ is a convolution performed with circular boundary conditions. As in~\cite{zhang2017learning,hurault2021gradient,pesquet2021learning,zhang2021plug}, we consider the 8  real-world camera shake kernels of~\cite{levin2009understanding}, the $9 \times9$ uniform kernel and the $25 \times 25$ Gaussian kernel with standard deviation $1.6$.

For single image \emph{super-resolution} (SR), the low-resolution image $y \in \mathbb{R}^m$ is obtained from the high-resolution one $x \in \mathbb{R}^n$ via $y = SHx + \nu$ where $H \in \mathbb{R}^{n \times n}$ is the convolution with anti-aliasing kernel.  The matrix $S$ is the standard $s$-fold downsampling matrix of size $m\times \dime$ and $\dime = s^2 \times m$.     As in~\cite{zhang2021plug}, we evaluate SR performance on 4 isotropic Gaussian blur kernels with  standard deviations $0.7$, $1.2$, $1.6$ and~$2.0$;  and consider downsampled images at scale $s=2$ and $s=3$.

\paragraph{Hyperparameter selection} As explained in Section~\ref{ssec:prox_denoiser}, the proximal denoiser $D_\sigma$ defined in Proposition~\ref{prop:proxdenoiser} is trained following~\cite{hurault2022proximal} with a penalization encouraging $L_g < 1$. Convergence of ProxPnP-PGD, ProxPnP-$\alpha$PGD, ProxPnP-DRSdiff and ProxPnP-DRS are then respectively guaranteed by Corollary~\ref{cor:PnP-PGD}, Corollary~\ref{cor:ProxPnP-alphaPGD}, \cite[Theorem 4.3]{hurault2022proximal} and Theorem~\ref{thm:ProxPnP-DRS}. 
For each algorithm, in Table~\ref{tab:proxpnp_params} we propose default values for the involved hyperparameters. 
Note that we use the same choice of hyperparameters for both deblurring and super-resolution. The $\lambda$ parameter always satisfies the corresponding constraint required for convergence. As each algorithm uses its own set of parameters depending on the constraint on $\lambda$, each algorithm targets critical points of a different functional. 

For ProxPnP-PGD, ProxPnP-$\alpha$PGD and ProxPnP-DRS algorithms, we use the $\gamma$-relaxed version of the denoiser~\eqref{eq:relaxed_proximal_denoiser}. In practice, we found that the same choice of parameters $\gamma$ and $\sigma$ is optimal for both PGD and $\alpha$PGD, with values depending on the amount of noise $\nu$ in the input image. We thus choose $\lambda \in [0,\lambda_{lim}]$ where, following Corollary~\ref{cor:PnP-PGD} and Corollary~\ref{cor:ProxPnP-alphaPGD}, for ProxPnP-PGD $\lambda_{lim}^{\text{PGD}} = \frac{1}{L_f}\frac{\gamma + 2}{\gamma + 1}$ and for ProxPnP-$\alpha$PGD $\lambda_{lim}^{\alpha \text{PGD}} = \frac{1}{L_f}\frac{\gamma + 1}{\gamma} \geq \lambda_{lim}^{\text{PGD}} $. For both $\nu = 0.01$ and $\nu = 0.03$, $\lambda$ is set to its maximal allowed value $\lambda_{lim}$. As $\lambda_{lim}^{\alpha \text{PGD}} \geq \lambda_{lim}^{\text{PGD}}$, ProxPnP-$\alpha$PGD is expected to outperform ProxPnP-PGD at these noise levels. Finally, for ProxPnP-$\alpha$PGD, $\alpha$ is set to its maximum possible value $1 / (\lambda L_f)$. 

For ProxPnP-DRSdiff, $\lambda$ is also set to its maximal possible $\frac{1}{L_f}$ for theoretical convergence \cite[Theorem 4.3]{hurault2022proximal}.

Eventually, for ProxPnP-DRS, Theorem~\ref{thm:ProxPnP-DRS} requires $L_{g_\sigma} < L_{max}(\beta)$ via the constraint~\eqref{eq:L_beta_DRS2}. As $D_{\sigma}$ is trained to ensure $L_{g_\sigma} < 1$, we do not retrain the denoiser for a specific $L_{max}(\beta)$ value, but we use again the $\gamma$-relaxed version of the denoiser $D^\gamma_\sigma$~\eqref{eq:relaxed_proximal_denoiser} with $\gamma = L_{max}(\beta)$. $\gamma \nabla g_\sigma$ is then $L_{max}(\beta)$-Lipschitz and $D^\gamma_\sigma = \prox_{\phi^\gamma_\sigma}$. In practice, we find $\beta=0.25$ to be a good compromise. For this choice of $\beta$, $\eqref{eq:L_beta_DRS2}$ is satisfied for $L_{max}(\beta) \leq 0.45$.

    \begin{table}[!ht]
        \centering \centering \normalsize
        \begin{tabular}{c c c c c }
            $\nu (./255)$ &  & 2.55 & 7.65 & 12.75 \\
            \midrule
             \multirow{3}{*}{PGD}  & 
             $\gamma$ & $0.6$ & $1$ & $1$ \\
             & $\lambda = \frac{\gamma + 2}{\gamma + 1}$  & $1.625$ & $1.5$ & $1.5$ \\
             &  $\sigma/\nu$&$ 1.25 $& $0.75$ & $0.5$ \\
             \midrule
              \multirow{4}{*}{$\alpha$PGD}  & 
             $\gamma$ & $0.6$ & $1$ & $1$ \\
             & $\lambda = \frac{\gamma + 1}{\gamma}$  & $2.66$ & $2$ & $2$ \\
             & $\alpha = \frac{1}{\lambda}$ & $0.37$ & $0.5$ & $0.5$ \\ 
             &  $\sigma/\nu$&$ 1.25 $& $0.75$ & $0.5$ \\
             \midrule
            \multirow{4}{*}{DRS} 
            & $\beta$ & $0.25$ & $0.25$ & $0.25$ \\
            & $\gamma = L_{max}(\beta) $ & $0.45$ & $0.45$ & $0.45$ \\
            & $\lambda$ & $5$ & $1.5$ &  $0.75$ \\
             & $\sigma/\nu$ & $2$ & $1$ & $0.5$ \\
             \midrule
             \multirow{3}{*}{DRSdiff} 
            & $\beta$ & $0.5$ & $0.5$ & $0.5$ \\
            & $\lambda$ & $1.$ & $1.$ &  $1.$ \\
             & $\sigma/\nu$ & $0.75$ & $0.5$ & $0.5$ \\
             \midrule
        \end{tabular}
        \caption{Choice of the different hyperparameters involved for each ProxPnP algorithm. The same set of hyperparameters are used for both debluring and super-resolution experiments.}
        \label{tab:proxpnp_params}
    \end{table}

    \begin{table*}[!ht]
\centering\setlength\tabcolsep{3pt}
\begin{tabular}{c c c c  c c c c c c }
& \multicolumn{3}{c}{Deblurring}& \multicolumn{6}{c}{Super-resolution\vspace*{-2pt}}\\
\cmidrule(lr){2-4}\cmidrule(lr){5-10}
& \multicolumn{3}{c}{}&\multicolumn{3}{c}{scale $s = 2$} & \multicolumn{3}{c}{scale $s = 3$\vspace*{-2pt}} \\
\cmidrule(lr){5-7} \cmidrule(lr){8-10}%
    Noise level $\nu$ & 0.01 & 0.03 & 0.05 & 0.01 & 0.03 & 0.05 & 0.01 & 0.03 & 0.05 \vspace*{-2pt}\\
    \midrule
    IRCNN~\cite{zhang2017learning}& $31.42$ & $28.01$ & $26.40$  & $26.97$ & $25.86$ & $25.45$ & $ 25.60$ & $ 24.72$ & $24.38$\\
    DPIR~\cite{zhang2021plug} & $\mathbf{31.93}$ & $\mathbf{28.30}$ & $\underline{26.82}$&$27.79$ & $26.58$ & $\underline{25.83}$ & $\underline{26.05}$ & $\underline{25.27}$ & $\underline{24.66}$ \\
  GS-PnP~\cite{hurault2021gradient} & $\underline{31.70}$ & $\underline{28.28}$ & $\mathbf{26.86}$  & $27.88$ & $\mathbf{26.81}$ & $\mathbf{26.01}$ & $25.97$ & $\mathbf{25.35}$ & $\mathbf{24.74}$\\
  \midrule
  ProxPnP-PGD & $30.91$ & $27.97$ & $26.66$ & $27.68$ & $26.57$ & $25.81$ & $25.94$ & $25.20$ & $24.62$ \\
  ProxPnP-$\alpha$PGD & $31.55$ & $28.03$ & $26.66$& $\underline{27.92}$ & $\underline{26.61}$ & $25.80$ & $\underline{26.03}$ & $25.26$ & $24.61$ \\
  ProxPnP-DRSdiff & $30.56$ & $27.78$ & $26.61$ & $27.44$ &  $26.58$ & $25.82$ & $25.75$ & $25.19$ & $24.63$ \\
  ProxPnP-DRS & $31.51$ & $28.01$ & $26.63$ & $\mathbf{27.95}$ & $26.58$ & $25.81$ & $\mathbf{26.13}$ & $\underline{25.27}$ & $24.65$\\
\end{tabular}
\caption{PSNR (dB) results on CBSD68 for  deblurring (left) and super-resolution (right). PSNR are averaged over $10$ blur kernels for deblurring (left) and $4$ blur kernels along various scales $s$ for super-resolution (right).
}
\label{tab:whole_results}
\end{table*}

 \paragraph{Numerical performance analysis} We numerically evaluate in Table~\ref{tab:whole_results}, for deblurring and for super-resolution, the PSNR performance of our four ProxPnP algorithms. We give comparisons with the deep state-of-the-art PnP methods IRCNN~\citep{zhang2017learning} and DPIR~\citep{zhang2021plug} which both apply the PnP-HQS algorithm with decreasing stepsize but without convergence guarantees. We also provide comparisons with the GS-PnP (referring to PnP with Gradient Step denoiser) method presented in~\cite{hurault2021gradient}.  

    Observe that, among ProxPnP methods, ProxPnP-DRS and ProxPnP-$\alpha$PGD give the best performance over the variety of kernels and noise levels. Indeed, ProxPnP-DRS convergence is guaranteed for any of~$\lambda$, which can thus be tuned to optimize performance. ProxPnP-DRSdiff is, on the other hand, constrained to $\lambda < 1$.  Similarly, ProxPnP-PGD is constrained to $\lambda < 2$ when ProxPnP-$\alpha$PGD restriction on $\lambda$ is relaxed. When a low amount of regularization (\emph{i.e.} a  large $\lambda$ value) is necessary, an upper bound on $\lambda$ can severely limit the restoration capacity of the algorithm. Indeed, we observe that when the input noise is low ($\nu = 2.55$), ProxPnP-DRSdiff and ProxPnP-PGD perform significantly worse. However, when the input noise is high, a stronger regularization is necessary (\emph{i.e.} a small $\lambda$ value) and all methods perform comparably. 


    We also provide visual comparisons for deblurring 
    in Figure~\ref{fig:proxpnp_deblurring} and super-resolution in Figure~\ref{fig:proxpnp_SR}. Along with the output images, for each ProxPnP algorithm, we plot the evolution of the corresponding provably-decreasing function. We recall in Table~\ref{tab:lyapunov}, for each algorithm, the formula of each function that is proved to decrease and converge along the iterations. We also plot the evolution of the norm of the residuals and of the PSNR along the iterates. These plots empirically confirm the theoretical convergence results.
     Observe that, despite being trained with additional constraints to guarantee convergence, ProxPnP-$\alpha$PGD and ProxPnP-DRS globally compare with the performance of the state-of-the-art DPIR method. 
 
    \begin{table*}[ht]
        \centering
        \begin{tabular}{c|c}
             Algorithm & Decreasing function \\
             \midrule
             ProxPnP-PGD & $F(x) = \lambda f(x) + \phi_\sigma(x)$ \\
             ProxPnP-$\alpha$PGD & $F^{\alpha}(y) = \lambda f(y) + \phi_\sigma(y) + \frac{\alpha}{2}\left(1-\frac1\alpha\right)^2 \norm{y - y_{-}}^2$ \\
             ProxPnP-DRSdiff & 
    $F^{DR}(x,y,z) = \lambda f(y) + \phi_\sigma(z) +  \langle y-x, y-z \rangle + \frac12\norm{y-z}^2$
\\
             ProxPnP-DRS &  $F^{DR}(x,y,z) = \lambda f(z) + \phi_\sigma(y) +  \langle y-x, y-z \rangle + \frac12\norm{y-z}^2$
        \end{tabular}
        \caption{For each ProxPnP algorithm, the corresponding convergence theorem exhibits a function that is proved to decrease along the iterates.}
        \label{tab:lyapunov}
    \end{table*}

\begin{figure*}[!ht] \centering \normalsize
\vspace{-0.5cm}
\captionsetup[subfigure]{justification=centering}
    \begin{subfigure}[b]{.24\linewidth}
        \centering
        \begin{tikzpicture}[spy using outlines={rectangle,blue,magnification=5,size=1.5cm, connect spies}]
        \node {\includegraphics[height=3.cm]{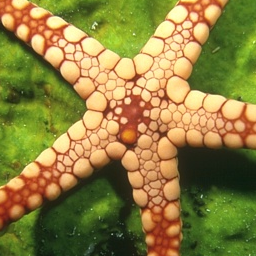}};
        \spy on (0.25,-0.4) in node [left] at (1.5,.75);
        \node at (-1.15,1.15) {\includegraphics[scale=1.2]{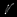}};
        \end{tikzpicture}
        \caption{Clean \\~}
    \end{subfigure}
\begin{subfigure}[b]{.24\linewidth}
        \centering
        \begin{tikzpicture}[spy using outlines={rectangle,blue,magnification=5,size=1.5cm, connect spies}]
        \node {\includegraphics[height=3cm]{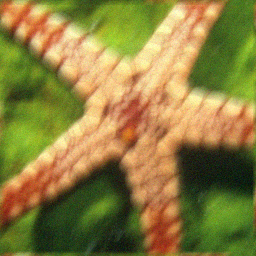}};
        \spy on (0.25,-0.4) in node [left] at  (1.5,.75);
        \end{tikzpicture}
        \caption{Observed \\ ($20.97$dB)}
    \end{subfigure}
\begin{subfigure}[b]{.24\linewidth}
        \centering
        \begin{tikzpicture}[spy using outlines={rectangle,blue,magnification=5,size=1.5cm, connect spies}]
        \node {\includegraphics[height=3cm]{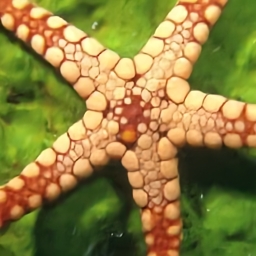}};
        \spy on (0.25,-0.4) in node [left] at  (1.5,.75);
        \end{tikzpicture}
        \caption{IRCNN \\ ($28.66$dB)}
    \end{subfigure}
\begin{subfigure}[b]{.24\linewidth}
        \centering
        \begin{tikzpicture}[spy using outlines={rectangle,blue,magnification=5,size=1.5cm, connect spies}]
        \node {\includegraphics[height=3cm]{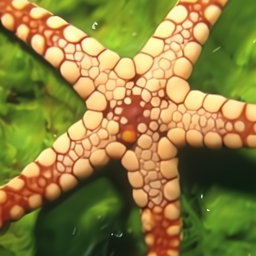}};
        \spy on (0.25,-0.4) in node [left] at  (1.5,.75);
        \end{tikzpicture}
        \caption{DPIR \\ ($29.76$dB)}
    \end{subfigure}
\begin{subfigure}[b]{.24\linewidth}
        \centering
        \begin{tikzpicture}[spy using outlines={rectangle,blue,magnification=5,size=1.5cm, connect spies}]
        \node {\includegraphics[height=3cm]{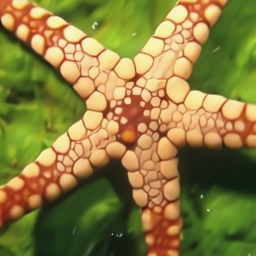}};
        \spy on (0.25,-0.4) in node [left] at (1.5,.75);
        \end{tikzpicture}
        \caption{ProxPnP-PGD ($29.35$dB)}
    \end{subfigure}
\begin{subfigure}[b]{.24\linewidth}
        \centering
        \begin{tikzpicture}[spy using outlines={rectangle,blue,magnification=5,size=1.5cm, connect spies}]
        \node {\includegraphics[height=3cm]{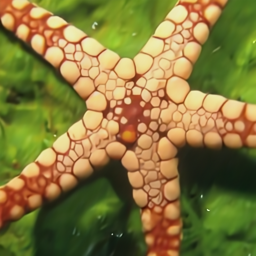}};
        \spy on (0.25,-0.4) in node [left] at (1.5,.75);
        \end{tikzpicture}
        \caption{ProxPnP-$\alpha$PGD ($29.68$dB)}
    \end{subfigure} 
\begin{subfigure}[b]{.24\linewidth}
        \centering
        \begin{tikzpicture}[spy using outlines={rectangle,blue,magnification=5,size=1.5cm, connect spies}]
        \node {\includegraphics[height=3cm]{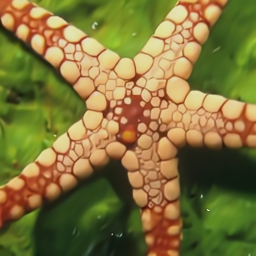}};
        \spy on (0.25,-0.4) in node [left] at (1.5,.75);
        \end{tikzpicture}
        \caption{ProxPnP-DRSdiff ($29.38$dB)}
    \end{subfigure}
    \begin{subfigure}[b]{.24\linewidth}
        \centering
        \begin{tikzpicture}[spy using outlines={rectangle,blue,magnification=5,size=1.5cm, connect spies}]
        \node {\includegraphics[height=3cm]{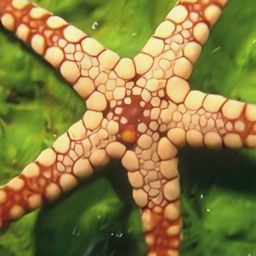}};
        \spy on (0.25,-0.4) in node [left] at (1.5,.75);
        \end{tikzpicture}
        \caption{ProxPnP-DRS ($29.51$dB)}
    \end{subfigure} \\
 \begin{subfigure}[b]{.24\linewidth}
    \centering
      \includegraphics[width=3.25cm]{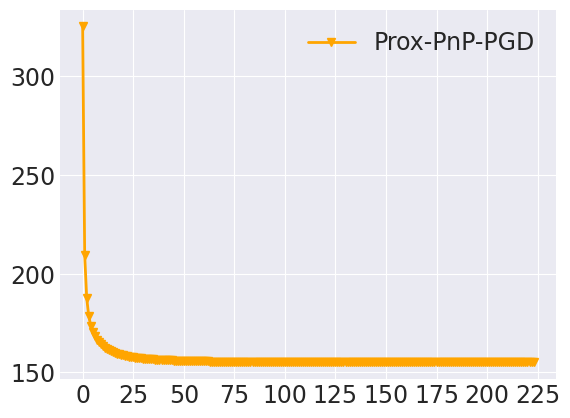}
    \caption{$F(x_k)$}
\end{subfigure}
\begin{subfigure}[b]{.24\linewidth}
    \centering
      \includegraphics[width=3.25cm]{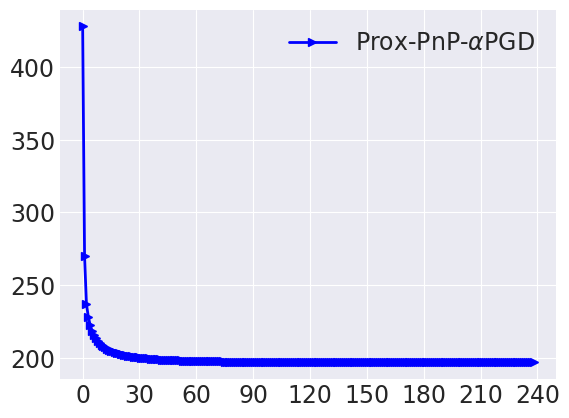}
    \caption{$F^\alpha(x_k)$}
\end{subfigure}
\begin{subfigure}[b]{.24\linewidth}
    \centering
      \includegraphics[width=3.25cm]{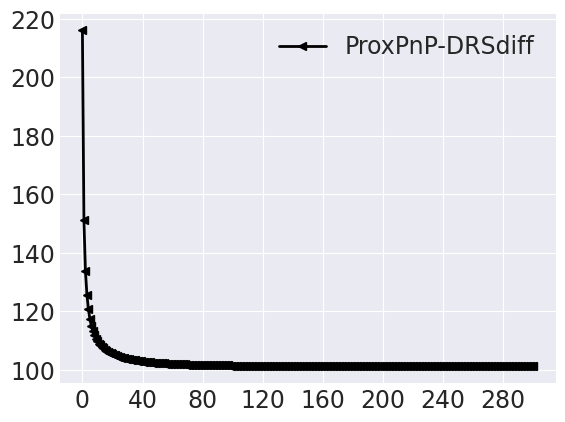}
    \caption{$F^{DR,1}_k$}
\end{subfigure}
\begin{subfigure}[b]{.24\linewidth}
    \centering
      \includegraphics[width=3.25cm]{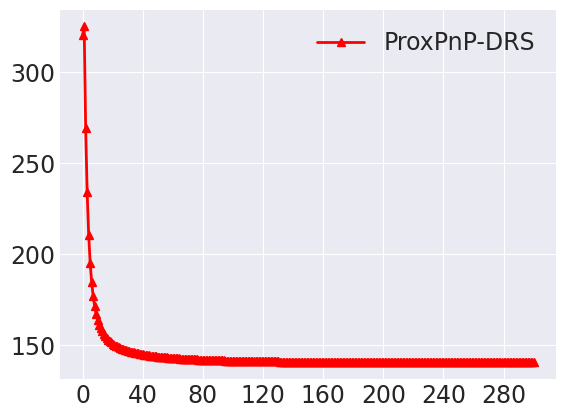}
    \caption{$F^{DR}_k$}
\end{subfigure}
\begin{subfigure}[b]{.4\linewidth}
    \centering
    \includegraphics[width=5cm]{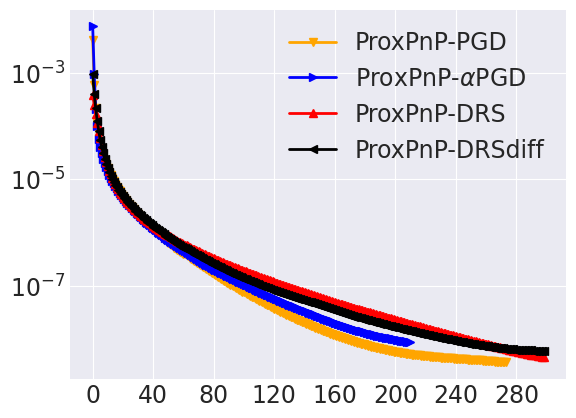}
    \caption{$\gamma_k$ (log scale)}
\end{subfigure}
\begin{subfigure}[b]{.4\linewidth}
    \centering
    \includegraphics[width=4.8cm]{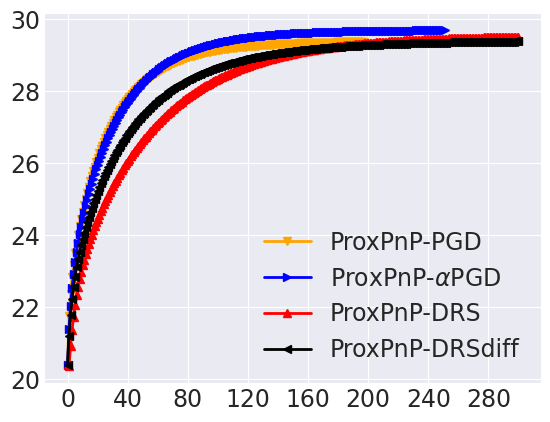}
    \caption{$\text{PSNR}(x_k)$}
\end{subfigure}
\caption{Deblurring with various methods of “starfish” degraded with the indicated blur kernel and input noise level $\nu=0.03$. We also plot for each ProxPnP algorithm the evolution of the respective decreasing Lyapunov functions, the residual ${\gamma_k = \min_{0 \leq i \leq k}\norm{x_{i+1}-x_i}^2}/{{\norm{x_0}^2}}$ and the PSNR.}
\label{fig:proxpnp_deblurring}
\end{figure*}

\begin{figure*}[ht] \centering \normalsize
\captionsetup[subfigure]{justification=centering}
\begin{subfigure}[b]{.24\linewidth}
        \centering
            \begin{tikzpicture}[spy using outlines={rectangle,blue,magnification=5,size=1.5cm, connect spies}]
            \node {\includegraphics[scale=0.35]{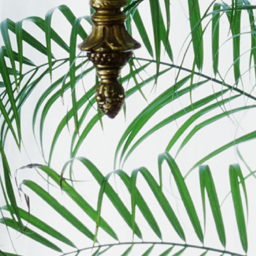}};
            \spy on (-0.4,0.4) in node [left] at (1.55,-0.85);
            \node at (-1.22,1.22) {\includegraphics[scale=0.8]{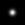}};
            \end{tikzpicture}
        \caption{Clean \\~}
    \end{subfigure}
\begin{subfigure}[b]{.24\linewidth}
        \centering
        \begin{tikzpicture}[spy using outlines={rectangle,blue,magnification=5,size=1.5cm, connect spies}]
        \node {\includegraphics[scale=0.7]{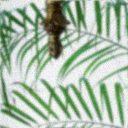}};
        \spy on (-0.4,0.4) in node [left] at (1.55,-0.85);
        \end{tikzpicture}
        \caption{Observed \\ ~}
    \end{subfigure}
\begin{subfigure}[b]{.24\linewidth}
        \centering
        \begin{tikzpicture}[spy using outlines={rectangle,blue,magnification=5,size=1.5cm, connect spies}]
        \node {\includegraphics[scale=0.35]{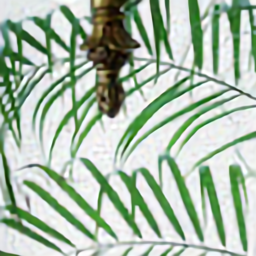}};
        \spy on (-0.4,0.4) in node [left] at (1.55,-0.85);
        \end{tikzpicture}
        \caption{IRCNN \\ ($22.82$dB)}
    \end{subfigure}
\begin{subfigure}[b]{.24\linewidth}
        \centering
        \begin{tikzpicture}[spy using outlines={rectangle,blue,magnification=5,size=1.5cm, connect spies}]
        \node {\includegraphics[scale=0.35]{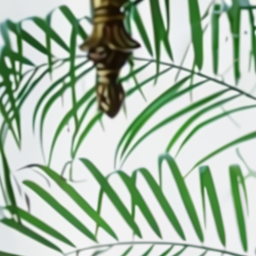}};
        \spy on (-0.4,0.4) in node [left] at (1.55,-0.85);
        \end{tikzpicture}
        \caption{DPIR \\ ($23.97$dB)}
    \end{subfigure}
\begin{subfigure}[b]{.24\linewidth}
        \centering
        \begin{tikzpicture}[spy using outlines={rectangle,blue,magnification=5,size=1.5cm, connect spies}]
        \node {\includegraphics[height=3cm]{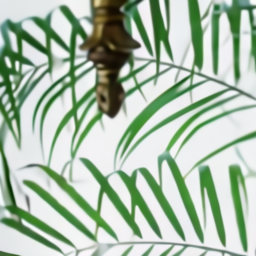}};
        \spy on (-0.4,0.4) in node [left] at (1.55,-0.85);
        \end{tikzpicture}
        \caption{ProxPnP-PGD \\ ($23.98$dB)}
    \end{subfigure}
\begin{subfigure}[b]{.24\linewidth}
        \centering
        \begin{tikzpicture}[spy using outlines={rectangle,blue,magnification=5,size=1.5cm, connect spies}]
        \node {\includegraphics[height=3cm]{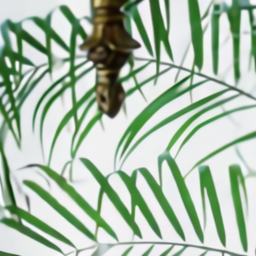}};
        \spy on (-0.4,0.4) in node [left] at (1.55,-0.85);
        \end{tikzpicture}
        \caption{ProxPnP-$\alpha$PGD \\ ($24.22$dB)}
    \end{subfigure}
\begin{subfigure}[b]{.24\linewidth}
        \centering
        \begin{tikzpicture}[spy using outlines={rectangle,blue,magnification=5,size=1.5cm, connect spies}]
        \node {\includegraphics[height=3cm]{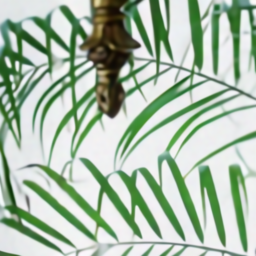}};
        \spy on (-0.4,0.4) in node [left] at (1.55,-0.85);
        \end{tikzpicture}
        \caption{ProxPnP-DRSdiff \\ ($23.96$dB)}
    \end{subfigure}
    \begin{subfigure}[b]{.24\linewidth}
        \centering
        \begin{tikzpicture}[spy using outlines={rectangle,blue,magnification=5,size=1.5cm, connect spies}]
        \node {\includegraphics[height=3cm]{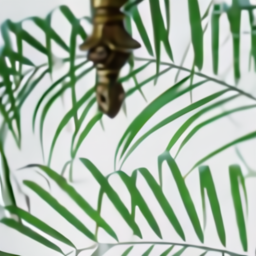}};
        \spy on (-0.4,0.4) in node [left] at (1.55,-0.85);
        \end{tikzpicture}
        \caption{ProxPnP-DRS \\ ($24.17$dB)}
    \end{subfigure}
 \begin{subfigure}[b]{.24\linewidth}
    \centering
    \includegraphics[width=3.25cm]{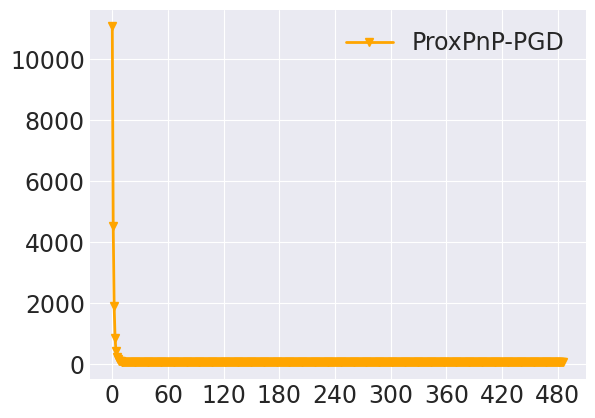}
    \caption{$F(x_k)$}
\end{subfigure}
 \begin{subfigure}[b]{.24\linewidth}
    \centering
    \includegraphics[width=3.25cm]{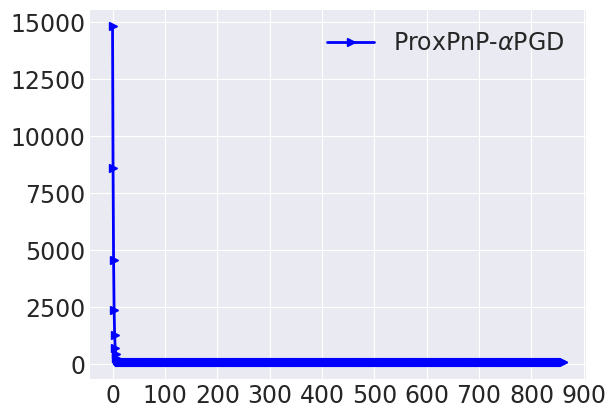}
    \caption{$F^\alpha(x_k)$}
\end{subfigure}
 \begin{subfigure}[b]{.24\linewidth}
    \centering
    \includegraphics[width=3.25cm]{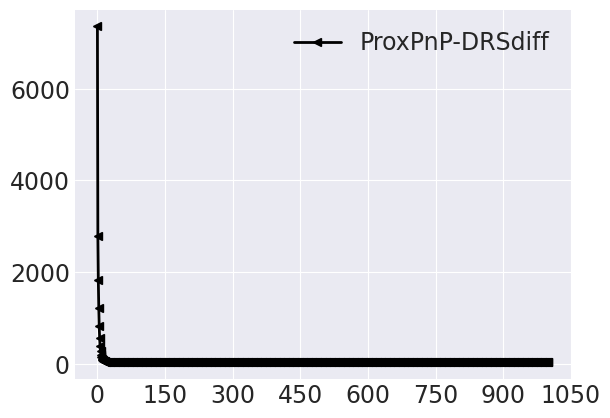}
    \caption{$F(x_k)$}
\end{subfigure}
 \begin{subfigure}[b]{.24\linewidth}
    \centering
    \includegraphics[width=3.25cm]{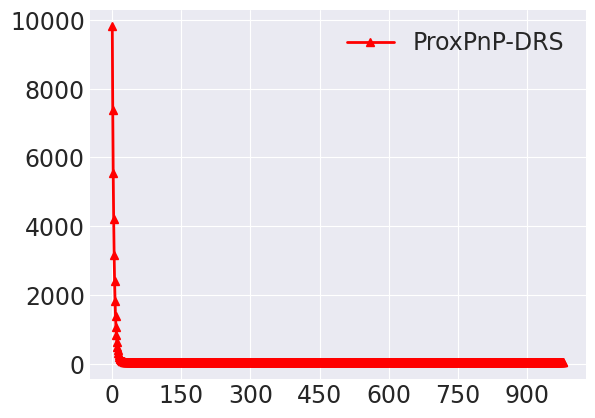}
    \caption{$F(x_k)$}
\end{subfigure}
\begin{subfigure}[b]{.4\linewidth}
    \centering
    \includegraphics[width=5cm]{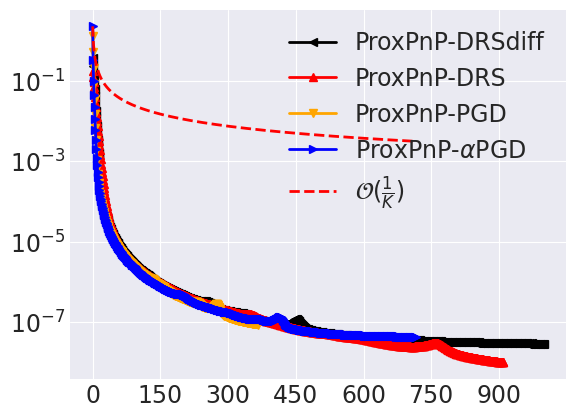}
    \caption{$\gamma_k$ (log scale)}
\end{subfigure}
\begin{subfigure}[b]{0.4\linewidth}
    \centering
    \includegraphics[width=4.8cm]{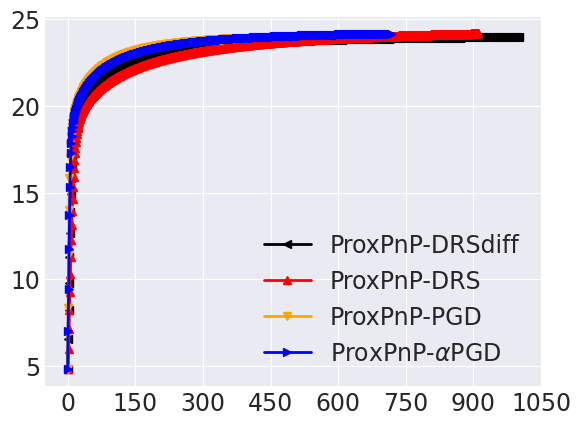}
    \caption{$\text{PSNR}(x_k)$}
\end{subfigure}
\caption{Super-resolution with various methods on “leaves” downsampled by $2$, with the indicated blur kernel and input noise level $\nu=0.03$. We plot the evolution of the respective Lyapunov functions, the residual ${\gamma_k = \min_{0 \leq i \leq k}\norm{x_{i+1}-x_i}^2}/{{\norm{x_0}^2}}$ and the PSNR.}
\label{fig:proxpnp_SR}
\end{figure*}

\section{Conclusion}
In this paper, we propose new convergent plug-and-play algorithms with minimal restrictions on the parameters of the inverse problem. We study two distinct schemes, the first one is a relaxed version of the Proximal Gradient Descent (PGD) algorithm and the second one is the Douglas Rachford Splitting (DRS) scheme. When used with a proximal denoiser, while the original PnP-PGD and PnP-DRS convergence results imposed restrictive conditions on the regularization parameter of the problem, the proposed algorithms converge, with milder conditions, towards stationary points of an explicit functional. Numerical experiments  exhibit  the convergence and the efficiency of the methods on deblurring and super-resolution problems.


A limitation of these approaches is that the $\gamma$-relaxation of the Gradient Step denoiser adds again an extra hyperparameter that needs to be tuned. For instance, PnP-$\alpha$PGD has in total four hyperparameters to tune: $\sigma$, $\lambda$, $\gamma$ and $\alpha$. Even though we proposed default values in Section~\ref{sec:expes}, this could be challenging and time-consuming for the user. 

Different from $\alpha$PGD, another possible modification of the PGD algorithm is the Primal-Dual Davis-Yin (PDDY) algorithm \citep{salim2022dualize}. It is a Primal Dual version of the Davis-Yin splitting method ~\citep{davis2017three}.
As shown by \citet{condat2022randprox} (Algorithm 3), it can be written as a relaxation of the PGD algorihm with a parameter $\alpha \in (0,1)$.
From \cite{condat2022randprox}, \emph{convex} convergence of the ProxPnP version of this algorithm could be established, for any regularization parameter $\lambda$, by choosing $\alpha$ small enough. However, the convergence of the PDDY algorithm in a \emph{nonconvex} setting has not been established and remains an open research avenue for future investigations.

\section*{Acknowledgements}
    This work was funded by the French ministry of research through a CDSN grant of ENS Paris-Saclay. This study has also been carried out with financial support from the French Research Agency through the PostProdLEAP and Mistic projects (ANR-19-CE23-0027-01 and ANR-19-CE40-005).
    A.C. thanks Juan Pablo Contreras for many interesting discussions about nonlinear accelerated descent algorithms.

\clearpage

\begin{appendices}


\section{Primal-Dual derivation of the $\alpha$PGD algorithm}
\label{app:derivation_from_PD}

Suppose we target a solution of the following minimization problem:
\begin{equation}
    \label{eq:PD_pb}
    \min_{x \in \mathbb{R}^n} f(x) + \phi(x)
\end{equation}
for $f$ $L_f$-smooth and strongly convex and $\phi$ convex. The method in~\cite{chambolle2016ergodic} targets a minimizer with a Bregman Primal-Dual algorithm
 \begin{equation} \label{eq:Bregman-Primal-Dual2}
   \left\{\begin{array}{ll} 
        y_{k+1} &= \argmin_y \frac1\sigma  D_{h ^Y}(y,y_k)  + f^*(y) - \langle \bar x_{k},y\rangle \\
        x_{k+1} &= \argmin_x \frac1\tau D_{h ^X}(x,x_k)+ \phi(x) + \langle x,y_{k+1}\rangle \\
        \bar x_{k+1} &= x_{k+1} + \beta (x_{k+1}-x_k) ,
            \end{array}\right.
\end{equation}
where $ D_{h}$ here denotes the Bregman divergence associated to a convex potential~$h$. For convex $f$ and $\phi$, provided that the potentials $h^X$ and $h^Y$ are  $1$-convex with respect to the norm $||.||^2$, convergence of~\eqref{eq:Bregman-Primal-Dual2} towards a solution of \eqref{eq:PD_pb} is ensured as long as $\tau \sigma \norm{K^*K} < 1 $ \citep[Remark 3]{chambolle2016ergodic}.

Notice that $L_f f^*$ is $1$-strongly convex with respect to the norm $||.||^2$. Indeed, $f$ smooth and convex with a $L_f$-Lipschitz gradient implies $f^*$ $1/L_f$ strongly-convex, \emph{i.e.} $L_f f^*$ is $1$-strongly convex. Then we can use $h^Y = L_ff^*$ (and $h^X = \frac{1}{2}\norm{.}^2$) and the algorithm becomes
\begin{equation}
   \left\{\begin{array}{ll} 
         y_{k+1} &= \argmin_y \left(\frac{L_f}{\sigma} +1 \right) f^*(y) - \langle \bar x_{k} + \frac{L_f}{\sigma}\nabla f^*(y^k),y\rangle \\
        x_{k+1} &\in \argmin_x \frac1{2\tau}||x-x^{k}||^2 + \phi(x) + \langle x,y_{k+1}\rangle \\
        \bar x_{k+1} &= x_{k+1} + \beta (x_{k+1}-x_k) .
            \end{array}\right.
\end{equation}
The optimality condition for the first update is
\begin{equation}
    \left(\frac{L_f}{\sigma}+1 \right) \nabla f^*(y_{k+1}) = \bar x_{k} + \frac{L_f}{\sigma}\nabla f^*(y^k) .
\end{equation}
Using the fact that $(\nabla f)^{-1}=\nabla f^*$, with the change of variable $y_k \longleftarrow \nabla f^*(y_{k})$,
the previous algorithm can then be rewritten in a fully primal formulation:
\begin{equation}  \label{eq:VM-Primal}
   \left\{\begin{array}{ll} 
         y_{k+1} &= \frac{\bar x_k + \frac{L_f}{\sigma} y_k}{1 + \frac{L_f}{\sigma}} \\
        x_{k+1} &\in \prox_{\tau \phi}(x_k - \tau  \nabla f(y_{k+1})) \\
        \bar x_{k+1} &= x_{k+1} + \beta (x_{k+1}-x_k) . 
            \end{array}\right.
\end{equation}
For $\alpha = \frac{1}{1 + \frac{L_f}{\sigma}}$, $\beta = 1-\alpha$, the algorithm writes 
\begin{equation}
   \left\{\begin{array}{ll} 
        y_{k+1} &= \alpha \bar x_k + (1-\alpha) y_k\\
        x_{k+1} &\in \prox_{\tau \phi}(x_k - \tau  \nabla f(y_{k+1})) \\
        \bar x_{k+1} &= x_{k+1} + (1-\alpha)(x_{k+1}-x_k) . 
            \end{array}\right.
\end{equation}
or
\begin{equation}
   \left\{\begin{array}{ll} 
  y_{k+1} &= \alpha x_k + (1-\alpha)y_k +  \alpha(1-\alpha)(x_k - x_{k-1}) \\
        x_{k+1} &\in \prox_{\tau \phi}(x_k - \tau \nabla f(y_{k+1}))
            \end{array}\right.
\end{equation}
The latter is equivalent to~\eqref{eq:alphaPGD} with $y_{k}$ in place of $q_{k}$.

\section{Stepsize condition in Theorem~\ref{thm:alphaPGD}.}
\label{app:better_bound}

In Theorem~\ref{thm:alphaPGD}, the stepsize condition 

\begin{equation}
\tau< \min\left(\frac{1}{\alpha L_f},\frac{\alpha}{M}\right)
\end{equation} can be replaced by the slightly better bound
\begin{equation}
     \tau < \min \left( \frac{1}{\alpha L_f}, \frac{2\alpha}{\alpha^3 L_f + (2-\alpha)M}\right).
\end{equation}
which give little numerical gain. For sake of completeness, we develop the proof here.
\begin{proof}
We keep the second term in~\eqref{eq:before_cases} instead of just using its non-negativity.
From the relation
 \begin{equation}
 x_{k+1}-x_{k} = \frac{1}{\alpha}(y_{k+1}-y_k) + \left(1-\frac{1}{\alpha}\right)(y_{k}-y_{k-1}), 
\end{equation}
by convexity of the squared $\ell_2$ norm, for $0 < \alpha < 1$, we have 
\begin{equation}
\begin{split}
\norm{y_{k+1}-y_k}^2 \leq \alpha \norm{x_{k+1}-x_{k}}^2 + (1-\alpha) \norm{y_{k}-y_{k-1}}^2
\end{split}
\end{equation}
and  
\begin{equation}\label{tmp}
\begin{split}
&\norm{x_{k+1}-x_{k}}^2 \geq \frac{1}{\alpha}\norm{y_{k+1}-y_k}^2  +  \left(1-\frac{1}{\alpha}\right) \norm{y_{k}-y_{k-1}}^2.
\end{split}
\end{equation}
which gives finally{\small
\begin{equation}
\begin{split}
  F(y_{k}) - F(y_{k+1}) &\geq  \left(   \alpha\left(   1 - \frac1\alpha\right) \hspace{-3.5pt}\left(   \frac{1}{2\tau}    -  \frac{\alpha L_f}{2}   \right)     -    \frac{\alpha}{2\tau}\left(   1-\frac1\alpha \right)^{ 2} \right)   \norm{y_k -\hspace{-.5pt}y_{k-1}}^2 
  \\
      &+   \left( \frac{1}{2\tau} - \frac{\alpha  L_f}{2} + \frac1\alpha (\frac{1}{2\tau}- \frac{M(2-\alpha)}{ 2 })   \right)
        \norm{y_{k+1}-y_k}^2 \\
    &=  - \frac{1-\alpha}{2\alpha\tau} \left( 1 - \alpha^2 \tau   L_f \right) \norm{y_k-y_{k-1}}^2 \\
    &+   \frac{1}{2\alpha\tau} \left( 1 + \alpha - \alpha^2\tau  L_f - \tau M(2-\alpha) \right)
        \norm{y_{k+1}-y_k}^2 \\
    &= -\delta \norm{y_k - y_{k-1}}^2 + \delta \norm{y_{k+1} - y_{k}}^2 + (\gamma-\delta) \norm{y_{k+1} - y_{k}}^2
    \end{split}
    \end{equation}}
with 
\begin{align}
    \delta &= \frac{1-\alpha}{2\alpha\tau} \left( 1 - \alpha^2 \tau  L_f \right) \\
    \gamma &=  \frac{1}{2\alpha\tau} \left( 1 - \alpha^2\tau  L_f + \alpha - \tau M(2-\alpha) \right).
\end{align}
The condition on the stepsize becomes
\begin{equation}
\begin{split}
    & \gamma - \delta > 0 \\
    &\Leftrightarrow \tau < \frac{2\alpha}{\alpha^3 L_f + (2-\alpha)M},
\end{split}
\end{equation}
and the overall condition is 
\begin{equation}
     \tau < \min \left( \frac{1}{\alpha L_f}, \frac{2\alpha}{\alpha^3 L_f + (2-\alpha)M}\right).
\end{equation}
\end{proof}

\end{appendices}

\bibliography{mybibliography}

\end{document}